\newcounter{dummy} \numberwithin{dummy}{section}
\newtheorem{theorem}[dummy]{Theorem}
\newtheorem{corollary}[dummy]{Corollary}
\newtheorem{lemma}[dummy]{Lemma}
\newtheorem{definition}[dummy]{Definition}
\newtheorem{proposition}[dummy]{Proposition}
\theoremstyle{remark}
\newtheorem{remark}[dummy]{Remark}
\newcommand{\frakn}{\mathfrak{n}}
\newcommand{\scrL}{\mathscr{L}}
\DeclareMathOperator{\Ann}{Ann}
\DeclareMathOperator{\Sym}{Sym}
\DeclareMathOperator{\id}{id}
\DeclareMathOperator{\pr}{pr}
\DeclareMathOperator{\rank}{rank}
\DeclareMathOperator{\spn}{span}
\DeclareMathOperator{\Ad}{Ad}
\DeclareMathOperator{\ad}{ad}
\DeclareMathOperator{\gl}{\mathfrak{gl}}
\DeclareMathOperator{\Hol}{Hol}
\DeclareMathOperator{\Tay}{Tay}
\DeclareMathOperator{\disk}{disk}
\DeclareMathOperator{\Lie}{Lie}
\newcommand{\bnabla}{\boldsymbol \nabla}
\newcommand{\sfd}{\mathsf{d}}
\newcommand{\eps}{\varepsilon}
\newcommand\blank{{\kern.8pt\displaystyle\cdot\kern.8pt}}
\numberwithin{equation}{section}
\title[Asymptotic expansions of holonomy]{Asymptotic expansion of holonomy}
\author[E.~Grong and P.~Pansu]{Erlend Grong and Pierre Pansu}
\address{Universit\'e Paris Sud, Laboratoire des Signaux et Syst\`emes (L2S) Sup\'elec, CNRS, Universit\'e Paris-Saclay, 3 rue Joliot-Curie, 91192
Gif-sur-Yvette, France and University of Bergen, Department of Mathematics, P. O. Box 7803,
5020 Bergen, Norway.}
\email{erlend.grong@gmail.com}
\address{Laboratoire de Math\'ematiques d'Orsay, Universit\'e Paris-Sud, CNRS,
Universit\'e Paris-Saclay, 91405 Orsay, France.}
\email{pierre.pansu@math.u-psud.fr}
\subjclass[2010]{53C29, 41A99, 53C17}
\keywords{asymptotic expansion of holonomy, radial gauge, sub-Riemannian geometry, horizontal holonomy}
\begin{document}
\let\thefootnote\relax\footnotetext{
The first author is supported by the Fonds National de la Recherche Luxembourg (AFR 4736116 and OPEN Project GEOMREV) and by the Research Council of Norway (project number 249980/F20). The second author acknowledges support from ANR, project SRGI, ANR-15-CE40-0018.}

\begin{abstract}
Given a principal bundle with a connection, we look for an asymptotic expansion of the holonomy of a loop in terms of its length. This length is defined relative to some Riemannian or sub-Riemannian structure. We are able to give an asymptotic formula that is independent of choice of gauge.
\end{abstract}

\maketitle

\section{Introduction}
Let $G \to P \to M$ be a principal bundle with some connection $\omega$ and let $\Omega$ be its curvature form. The realization of curvature as the infinitesimal generator of holonomy of $\omega$ appears in the now classical Ambrose and Singer theorem, \cite{AmSi53}. Further elaborated by Ozeki \cite{Oze56}, this result explains how the holonomy group of $\omega$ is infinitesimally generated by $\Omega$ and its covariant derivatives at a point.

If we look at an individual loop $\gamma$ based in $x \in M$, we can also make the correspondence concrete in the case when $G$ is abelian. An application of Stokes theorem shows that for any $p \in P_x$, the corresponding holonomy $\Hol^\omega_p(\gamma)$ equals $\exp(- \int_{\disk(\gamma)} \sigma^* \Omega )$ where $\sigma$ denotes an arbitrary gauge satisfying $\sigma(x) = p$ and $\int_{\disk(\gamma)}$ denotes the integral over an arbitrary filling disk. Using ideas from \cite{Pan93}, we are able to show that this result holds approximately for short loops relative to some Riemannian or even sub-Riemannian metric on $M$. Such expressions have real world applications, see \cite{MuSa93,HaCh10}. A question in these applications if often to look for the best choice of gauge. In the case of $M = \mathbb{R}^n$, we obtain the following expansion using the radial gauge.
\begin{theorem} \label{th:Approx1}
For every $x \in \mathbb{R}^n$, define
$$m_\Omega(x) = \left\{ \begin{array}{ll} 3 + k & \text{$\Omega$ and all its covariant derivatives of order $\leq k$ vanish at $x$}, \\ 2 & \text{otherwise.} \end{array} \right.$$
Let $\sigma: \mathbb{R}^n \to P$ be a gauge in which $\omega$ is parallel in radial directions emanating from $x$. Then for $p = \sigma(x) \in P_x$, there is an $\eps >0$ and $C\geq 0$ such that
\begin{equation} \label{Approx1} \left| \exp^{-1} \Hol_p^\omega(\gamma) + \int_{\disk(\gamma)} \sigma^* \Omega \right| \leq  C \ell(\gamma)^{2m_\Omega(x)},\end{equation}
for every loop $\gamma$ based at $x$ with $\ell(\gamma) < \eps$. In the above formula, $\int_{\disk(\gamma)}$ denotes the integral over a radial filling disk.
\end{theorem}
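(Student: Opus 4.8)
The plan is to transfer the whole problem to the Lie algebra $\mathfrak g$ via the gauge $\sigma$. Writing $A = \sigma^*\omega$ and $F = \sigma^*\Omega$, the structure equation reads $F = dA + \tfrac12[A,A]$, and $\int_{\disk(\gamma)}\sigma^*\Omega = \int_{\disk(\gamma)}F$ is an ordinary (componentwise) integral of a $\mathfrak g$-valued $2$-form. After translating so that $x=0$, the quantity $\exp^{-1}\Hol^\omega_p(\gamma) = \log \Hol^\omega_p(\gamma)$ is the logarithm of the path-ordered exponential of $-A$ along $\gamma$, which is well defined once $\ell(\gamma)$ is small. So the task is to bound $\log\Hol^\omega_p(\gamma) + \int_{\disk(\gamma)}F$, where the radial disk is the cone $u(s,t) = s\,\gamma(t)$, whose area is $O(\ell(\gamma)^2)$.

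First I would exploit the radial gauge. The condition that $\omega$ be parallel along rays from $0$ is $\iota_E A = 0$, where $E = \sum_i y^i\partial_i$ is the Euler field. Cartan's formula then gives $\mathcal L_E A = \iota_E\,dA = \iota_E F - \tfrac12[\iota_E A, A] + \tfrac12[A,\iota_E A] = \iota_E F$, the nonlinear term dropping out precisely because $\iota_E A=0$. Integrating this linear transport equation along rays yields the exact reconstruction formula $A_i(y) = \int_0^1 t\, y^j F_{ji}(ty)\,dt$, hence the pointwise bound $|A(y)| \le C\,|y|\,\sup_{|z|\le|y|}|F(z)|$. Combined with the hypothesis, this gives the key smallness estimates: on the ball $B_\ell$ (with $\ell = \ell(\gamma)$, which contains $\gamma$ and its radial disk since $\gamma$ is based at $0$) one has $|F| = O(\ell^{\,m-2})$ and $|A| = O(\ell^{\,m-1})$, where $m = m_\Omega(x)$.

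Next I would expand the holonomy by the Magnus (Chen) series. Parametrising by arclength, $\log\Hol^\omega_p(\gamma) = -\int_\gamma A + \tfrac12\int_{0<t_2<t_1<\ell}[A(\dot\gamma(t_1)),A(\dot\gamma(t_2))]\,dt_2\,dt_1 + \cdots$; the $k$-th term is $k$-linear in $A$ and integrated over a simplex of volume $O(\ell^k)$, so with $|A| = O(\ell^{m-1})$ every term with $k\ge 2$ contributes $O(\ell^{k(m-1)}\cdot\ell^{k}) = O(\ell^{km})$, which for $k\ge 2$ is $O(\ell^{2m})$. For the linear term, ordinary Stokes (applied componentwise) over the radial disk gives $\int_\gamma A = \int_{\disk(\gamma)}dA$, and the structure equation turns this into
\[
-\int_\gamma A = -\int_{\disk(\gamma)}F + \tfrac12\int_{\disk(\gamma)}[A,A].
\]
The correction $\tfrac12\int_{\disk(\gamma)}[A,A]$ is bounded by $(\text{area})\times\sup|A|^2 = O(\ell^2)\cdot O(\ell^{2m-2}) = O(\ell^{2m})$. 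Summing, $\log\Hol^\omega_p(\gamma) + \int_{\disk(\gamma)}F = O(\ell^{2m_\Omega(x)})$, which is \eqref{Approx1}.

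The main obstacle is the uniform control of the nonlinear terms --- both the commutator terms of the Magnus series and the $\tfrac12\int[A,A]$ coming from the structure equation --- and verifying that they all land in $O(\ell^{2m})$ rather than merely $O(\ell^{m+1})$. This is exactly where the radial gauge is essential: in an arbitrary gauge $A = O(1)$ and these terms would only be $O(\ell^2)$, whereas the reconstruction formula forces the extra vanishing $|A| = O(\ell^{m-1})$ that doubles the exponent. A secondary technical point is to make the constant $C$ and the radius $\eps$ uniform, which amounts to bounding finitely many derivatives of $F$ on a fixed compact neighbourhood of $x$ and controlling the tail of the Magnus series there.
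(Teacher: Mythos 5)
Your proof is correct, and it rests on the same mechanism as the paper's: the radial gauge yields the reconstruction formula $A_i(y)=\int_0^1 s\,y^j F_{ji}(sy)\,ds$, hence $|A|=O(\ell^{m-1})$ on a ball containing $\gamma$ and its radial disk, and this doubled vanishing is exactly what upgrades the naive error to $O(\ell^{2m})$. The implementation, however, differs genuinely from the paper's proof (which obtains Theorem \ref{th:Approx1} as the Euclidean case of Theorem \ref{th:Flat}). The paper never expands the holonomy in a Magnus/Chen series and never invokes Stokes' theorem: it writes the horizontal-lift ODE $\dot a_s(t)a_s(t)^{-1}=-(\sigma^*\omega)(\dot\gamma_s(t))$ and uses the radial-gauge identity $(\sigma^*\omega)(\dot\gamma_s(t))=\int_0^s\frac1r(\sigma^*\Omega)\bigl(S(\gamma_r),\dot\gamma_r\bigr)\,dr$, whose $t$-integral is \emph{literally} the disk integral $\int_{\disk(\delta_s\gamma)}\sigma^*\Omega$; your two-step route (Stokes for $\int_\gamma A=\int_{\disk(\gamma)}dA$, then the structure equation with a $\frac12\int[A,A]$ correction) reproduces this identity, and in fact the correction you estimate vanishes identically, since one leg of the radial disk is radial and $\iota_E A=0$, so $dA$ and $F$ have the same pullback to the disk. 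The quadratic error is then handled in the paper by a single Picard-iteration bound (Lemma \ref{lemma:Picard}), $\|\exp^{-1}a(1)-\int_0^1 A\,dt\|\le C\|A\|_{L^\infty}^2$, which replaces your entire Magnus tail estimate together with its convergence issues, and the passage to the stated $\ell$-dependence is done by the dilation argument of Lemma \ref{lemma:StoL} rather than by working at scale $\ell$ directly. What your route buys is a self-contained, elementary argument for $\mathbb{R}^n$ using only classical tools (Fock--Schwinger gauge, Stokes, Magnus). What the paper's route buys is robustness: the ODE lemma sidesteps series convergence, and the dilation/weight formulation is what allows the identical proof to run on Carnot groups (Theorem \ref{th:Flat}) and, with the extended-space construction of Section \ref{sec:Curved}, on general sub-Riemannian manifolds, where your Euclidean Stokes decomposition would have to be redone. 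One step you should make explicit: the hypothesis defining $m_\Omega(x)$ concerns \emph{covariant} derivatives of $\Omega$, while your Taylor bound $|F|=O(\ell^{m-2})$ uses coordinate derivatives of $F=\sigma^*\Omega$; these vanishing conditions agree at $x$ by an easy induction via \eqref{SigmaDer}, because the bracket corrections involve lower-order derivatives that already vanish there.
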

The above result also holds true on Carnot groups, which also have dilations and hence the possibility of defining a radial gauge. The details are found in Section~\ref{sec:Flat}.

For this reason, the radial gauge is convenient, but may not always be so simple to compute in practice. We therefore continue in Section~\ref{sec:GaugeFree}, showing that the two-form $\sigma^* \Omega$ in \eqref{Approx1} can be replaced by a two-form with polynomial coefficients whose terms are independent of choice of gauge. For the special case of $\mathbb{R}^n$ with coordinates $(z_1, \dots, z_n)$, we have the following result.
\begin{theorem} \label{th:Approx2}
Write $\partial_k = \frac{\partial}{\partial z_k}$ and let $x = (x_1, \dots, x_n)$ be any point. Let $\sigma$ be an arbitrary gauge such that $\sigma(x) = p$. Use the notation $\omega_k^\sigma = (\sigma^* \omega)( \partial_k)$ and $\Omega^\sigma_{ij} = (\sigma^* \Omega)(\partial_i, \partial_j)$ and define
\begin{align} \label{RnF3}
F_p(\gamma) & = \frac{1}{2} \sum_{i,j=1}^n \Omega_{ij}^\sigma(x) \int_{\gamma}  (z_i - x_i) dz_j \\ \nonumber
& \quad + \frac{1}{3} \sum_{i,j,k=1}^n \left( (\partial_k \Omega_{ij}^\sigma)(x) + [ \omega_k^\sigma(x), \Omega_{ij}^\sigma(x) ]\right) \int_\gamma (z_i - x_i) (z_k- x_k) dz_j ,
\end{align}
for any loop based at $x$. Then $F_p$ does not depend on the choice of gauge. Furthermore, there exist constants $\eps > 0$ and $C \geq 0$, such that $\left| \exp^{-1} \Hol_p^\omega(\gamma) + F_p(\gamma) \right| \leq  C \ell(\gamma)^{4}$ for every $\gamma$ with $\ell(\gamma) < \eps$.
\end{theorem}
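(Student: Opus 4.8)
The plan is to prove gauge independence of $F_p$ by a direct computation with the transformation rules, and then to obtain the estimate by evaluating everything in the radial gauge and appealing to Theorem~\ref{th:Approx1}.

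First I would settle gauge independence. Two gauges $\sigma,\sigma'$ with $\sigma(x)=\sigma'(x)=p$ differ by a map $g\colon\mathbb{R}^n\to G$ with $\sigma'=\sigma\cdot g$ and $g(x)=e$. Under this change one has $\Omega_{ij}^{\sigma'}=\Ad_{g^{-1}}\Omega_{ij}^\sigma$ and $\omega_k^{\sigma'}=\Ad_{g^{-1}}\omega_k^\sigma+g^{-1}\partial_k g$. Since $g(x)=e$, the leading coefficients satisfy $\Omega_{ij}^{\sigma'}(x)=\Omega_{ij}^\sigma(x)$, so the first sum in \eqref{RnF3} is unaffected. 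For the second sum I would verify that the combination $\partial_k X+[\omega_k^\sigma,X]$ is the covariant derivative acting on a section $X$ transforming by $\Ad_{g^{-1}}$; a short computation, in which the terms containing $\partial_k g$ cancel against those produced by differentiating $\Ad_{g^{-1}}$, shows $\partial_k\Omega_{ij}^{\sigma'}+[\omega_k^{\sigma'},\Omega_{ij}^{\sigma'}]=\Ad_{g^{-1}}\bigl(\partial_k\Omega_{ij}^\sigma+[\omega_k^\sigma,\Omega_{ij}^\sigma]\bigr)$. Evaluating at $x$ and using $g(x)=e$ shows the coefficient of the second sum is also gauge independent. As the path integrals depend only on $\gamma$, this proves $F_p(\gamma)$ is independent of the gauge.

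For the estimate, let $\tilde\sigma$ be the radial gauge of Theorem~\ref{th:Approx1}; being parallel along the radial directions means $\tilde\sigma^*\omega$ annihilates the Euler field $R=\sum_k(z_k-x_k)\partial_k$, i.e.\ $\sum_k(z_k-x_k)\omega_k^{\tilde\sigma}(z)=0$. Expanding this identity to first order forces $\omega_k^{\tilde\sigma}(x)=0$, so the commutator term in \eqref{RnF3} drops out when $F_p$ is computed in $\tilde\sigma$. Because $F_p$ is gauge independent, it therefore suffices to match its two remaining terms against $\int_{\disk(\gamma)}\tilde\sigma^*\Omega$. Parametrising the radial filling disk by $\Phi(s,t)=x+s(\gamma(t)-x)$ and using $\partial_s\Phi=\gamma(t)-x$, $\partial_t\Phi=s\dot\gamma(t)$ gives
\[
\int_{\disk(\gamma)}\tilde\sigma^*\Omega=\int_0^1\!\!\int_0^1 s\sum_{i,j}(\gamma_i(t)-x_i)\dot\gamma_j(t)\,\Omega_{ij}^{\tilde\sigma}\bigl(x+s(\gamma(t)-x)\bigr)\,ds\,dt.
\]
Taylor expanding $\Omega_{ij}^{\tilde\sigma}$ to first order at $x$ and carrying out the $s$-integrals $\int_0^1 s\,ds=\tfrac12$ and $\int_0^1 s^2\,ds=\tfrac13$ reproduces exactly the two surviving terms of $F_p(\gamma)$.

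It then remains to bound the remainder. The quadratic Taylor remainder of $\Omega^{\tilde\sigma}$ yields, after integrating in $s$, a contribution controlled by $\int_0^1|\gamma(t)-x|^2\,|\gamma_i(t)-x_i|\,|\dot\gamma_j(t)|\,dt$; since $|\gamma(t)-x|\le C\ell(\gamma)$ for a loop based at $x$ and $\int_0^1|\dot\gamma_j(t)|\,dt\le C\ell(\gamma)$, this is of order $\ell(\gamma)^4$. Combining with Theorem~\ref{th:Approx1}, whose error is $C\ell(\gamma)^{2m_\Omega(x)}\le C\ell(\gamma)^4$ because $m_\Omega(x)\ge2$, the triangle inequality gives $\bigl|\exp^{-1}\Hol_p^\omega(\gamma)+F_p(\gamma)\bigr|\le C\ell(\gamma)^4$. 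The main obstacle I anticipate is the uniform bookkeeping of the remainder: one must ensure the Euclidean estimates $|z_i-x_i|\lesssim\ell(\gamma)$ hold over the entire radial disk and that all constants are uniform for $\ell(\gamma)<\eps$, which is where the smoothness of $\Omega^{\tilde\sigma}$ on a fixed neighbourhood of $x$ enters.
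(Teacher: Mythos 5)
Your proof is correct, and it reaches the statement by a route that is parallel to, but more elementary and self-contained than, the paper's. The paper obtains Theorem~\ref{th:Approx2} as the Euclidean instance of its gauge-free machinery: Theorem~\ref{th:Taylor} replaces $\sigma^*\Omega$ by the weighted Taylor polynomial $\Tay_p^\omega(\Omega;3)$ of the $\Ad(P)$-valued curvature, the $O(\ell^4)$ error being supplied by the general weight estimate of Lemma~\ref{lemma:PhiAlpha}; gauge independence is then automatic, because the coefficients are the covariant derivatives $\nabla^\omega_{\partial_k}\Omega_{ij}$ evaluated at the point $p\in P$, and formula \eqref{SigmaDer} converts them into $\partial_k\Omega^\sigma_{ij}(x)+[\omega^\sigma_k(x),\Omega^\sigma_{ij}(x)]$ in an arbitrary gauge. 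You instead verify the gauge covariance of that combination by hand (your cancellation computation is exactly the content of \eqref{SigmaDer}), anchor the estimate on Theorem~\ref{th:Approx1} in the radial gauge, and bound the first-order Taylor remainder of $\Omega^{\tilde\sigma}_{ij}$ on the radial disk directly, which is precisely the Euclidean, order-two instance of Lemma~\ref{lemma:PhiAlpha}; your observation that $\omega_k^{\tilde\sigma}(x)=0$ in the radial gauge is Lemma~\ref{lemma:RadialG}~(b). The trade-off: the paper's framework yields all orders $k<2m_\Omega$ at once and extends verbatim to Carnot groups and, via privileged coordinates, to sub-Riemannian manifolds, whereas your argument needs nothing beyond Theorem~\ref{th:Approx1}, the standard gauge transformation laws, and Taylor's theorem with remainder, and the points requiring care (uniformity of the remainder constant near $x$, the bound $|\gamma(t)-x|\le\ell(\gamma)$ over the whole radial disk, and $\ell(\gamma)^{2m_\Omega(x)}\le\ell(\gamma)^4$ for $\ell(\gamma)<\eps\le 1$) are all correctly handled.
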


For a general Riemannian or sub-Riemannian manifold, we first need to choose an appropriate local coordinate system of a type called privileged. However, after this choice is made, the above statement remains true with respect to the dilations in this coordinate system. We give the details of this in Section~\ref{sec:Curved}.

It was observed in \cite{Pan93} that integrating the curvature over a radial filling disk in the sub-Riemannian Heisenberg group will actually give a better approximation of the holonomy than in the Euclidean case. This result was found using ideas from \cite{FGR97,Rum94}. By applying the new theory of horizontal holonomy found in \cite{CGJK15}, we are able to show the following (for more details, see Section~\ref{sec:Equivalent}).
\begin{theorem}
Let $\pi: P \to M$ be a principal bundle over a sub-Riemannian manifold $(M, D, g)$, where $D$ has rank $n_1$. Assume that the sections of $D$ and their iterated brackets up to order $k$ span a subbundle of rank equal to the rank of the free nilpotent algebra of step $k+1$ and of $n_1$ generators. Then there is a``modified curvature'' $\tilde \Omega$ such that
\begin{equation*}  \left| \exp^{-1} \Hol_p^\omega(\gamma) + \int_{\disk(\gamma)} \sigma^* \tilde \Omega \right| \leq  C \ell(\gamma)^{2k+2},\end{equation*}
\end{theorem}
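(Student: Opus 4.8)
The plan is to reduce the statement to the model case of the free Carnot group and then transfer it to $(M,D,g)$ by means of privileged coordinates and the nilpotent approximation. First I would fix the base point $x$ and choose privileged coordinates $(z_1,\dots,z_n)$ centred at $x$, adapted to the flag $D \subset D^2 \subset \cdots \subset D^{k+1}$; to each coordinate $z_j$ one attaches its weight $w_j \in \{1,\dots,k+1\}$, and these weights generate a one-parameter family of anisotropic dilations $\delta_t$ with Euler field $E=\sum_j w_j z_j \partial_j$. The hypothesis that $D$ and its iterated brackets up to order $k$ span a bundle of rank equal to that of the free nilpotent algebra $\frakn$ of step $k+1$ on $n_1$ generators guarantees that the nilpotent approximation of $(M,D,g)$ at $x$ is exactly the free Carnot group $\mathbb{G}$ with Lie algebra $\frakn$. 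This freeness is what makes the weights behave as in the flat model and rules out the degeneracies that would otherwise spoil the weight count below.

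Next I would define the radial gauge $\sigma$ with $\sigma(x)=p$ by requiring $\omega$ to be parallel along the dilation orbits $t\mapsto \delta_t(z)$, exactly as in Theorem~\ref{th:Approx1} but with the Euclidean radial field replaced by $E$. The gauge condition $\iota_E(\sigma^*\omega)=0$ yields, through the homotopy (Fock--Schwinger) formula, an expression of $\sigma^*\omega$ as a weighted radial integral of $\sigma^*\Omega$, and hence a Taylor expansion of $\sigma^*\omega$ whose terms carry well-defined homogeneous weights. Using the horizontal-holonomy calculus of \cite{CGJK15}, I would then expand $\exp^{-1}\Hol_p^\omega(\gamma)$ as the path-ordered iterated integral of $\sigma^*\omega$ along the horizontal loop $\gamma$ and, via a Stokes-type argument as in the abelian case, rewrite it as an integral over the radial filling disk. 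The modified curvature $\tilde\Omega$ is then \emph{defined} so that $-\int_{\disk(\gamma)}\sigma^*\tilde\Omega$ is precisely the truncation to homogeneous weight $\le 2k+1$ of this expansion; because the gauge and the disk are dilation-adapted, each such piece---whether it comes from a covariant derivative $\bnabla^l\Omega$ or from a nonlinear commutator correction of the type $[\sigma^*\omega,\Omega]$ appearing in \eqref{RnF3}---can be written back as the disk integral of a two-form with polynomial coefficients.

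The heart of the matter is the homogeneity bookkeeping. Under $\delta_t$ a monomial two-form $z^\alpha\,dz_i\wedge dz_j$ scales by $t^{|\alpha|_w+w_i+w_j}$ with $|\alpha|_w=\sum_l \alpha_l w_l$, so the task is to prove that, along a horizontal loop of length $\ell$, the entire expansion of $\exp^{-1}\Hol_p^\omega(\gamma)$ organizes into a series in homogeneous weight whose weight-$w$ contribution is $O(\ell^{w})$. Granting this, the truncation $-\int_{\disk(\gamma)}\sigma^*\tilde\Omega$ agrees with the holonomy through weight $2k+1$, and the tail---including the quadratic path-ordering corrections $\tfrac12\iint_{s<t}[\,\sigma^*\omega,\sigma^*\omega\,]$---is $O(\ell^{2k+2})$. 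Horizontality of $\dot\gamma$ is essential: the weight-$w_j$ covector $dz_j$ enters at order $\ell^{w_j}$, so the high-weight coordinates that exist precisely because the step is $k+1$ are what push the error below the Euclidean value $\ell^{4}$.

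Finally I would pass from $\mathbb{G}$ back to $(M,D,g)$. The difference between $\sigma^*\Omega$ and its nilpotent-approximation counterpart consists of terms of strictly higher homogeneous weight, uniformly on a neighbourhood of $x$; inserting these into the same count shows they affect only the remainder of order $\ell^{2k+2}$, and a compactness argument over short loops of fixed length produces the uniform $C$ and the threshold $\eps$. I expect the main obstacle to be exactly this last transfer together with the non-abelian bookkeeping: one must carry the matrix-valued commutator corrections through the path-ordered expansion and verify that replacing the true structure by its free nilpotent model genuinely costs weight $\ge 2k+2$, rather than merely being of that order on the model itself. Controlling these two error sources uniformly---not the Stokes estimate, which is routine---is where the free-nilpotency hypothesis does its real work.
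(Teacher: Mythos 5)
Your proposal breaks at its central claim: that after truncating the weight expansion, ``the tail---including the quadratic path-ordering corrections $\tfrac12\iint_{s<t}[\,\sigma^*\omega,\sigma^*\omega\,]$---is $O(\ell^{2k+2})$.'' This is false for the original connection $\omega$. The quadratic (Picard) correction to $\exp^{-1}\Hol^\omega_p(\gamma)$ is controlled by $\|\sigma^*\omega(\dot\gamma_s)\|_{L^\infty}^2$ (cf.\ Lemma~\ref{lemma:Picard} and the proof of Theorem~\ref{th:Flat}), and in the radial gauge this is of order $\ell^{2m_\Omega}$, where $m_\Omega$ is the weight of $\sigma^*\Omega$. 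For the original curvature one has $m_\Omega=2$ generically: the restriction $\Omega|_{D\wedge D}$ at the base point is an equivariant quantity that no choice of privileged coordinates or gauge can kill, and the freeness of the growth vector says nothing about it. So your error term is stuck at $O(\ell^{4})$; truncating the linear (disk-integral) part at weight $2k+1$ cannot remove a quadratic remainder of weight $4$. Nor can that remainder be absorbed into your definition of $\tilde\Omega$: an iterated integral over $\{s<t\}$ is a genuinely quadratic functional of the loop and is not of the form $\int_{\disk(\gamma)}\beta$ for any fixed two-form $\beta$.

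The missing idea---and the place where the paper actually uses the freeness hypothesis---is to change the \emph{connection}, not the gauge or the coordinates. Since curves of finite sub-Riemannian length are $D$-horizontal, $\Hol^\omega_p(\gamma)$ depends only on the $D$-equivalence class $[\omega]_D$, and Theorem~\ref{th:HorHol} (the horizontal holonomy theory of \cite{CGJK15}, whose content your proposal cites but does not use) produces the unique $\tilde\omega\in[\omega]_D$ whose curvature $\tilde\Omega$, given by \eqref{tildeOmega}, is normalized by a selector, $\iota_\chi\tilde\Omega=0$. The paper then shows by a bracket/rank induction on horizontal lifts that when the growth vector attains the free nilpotent rank, this normalized $\tilde\Omega$ vanishes on $\sum_{i} D^i\oplus D^{k-i}$ and hence has weight $\geq k+1$; the freeness enters through this rank count forcing $\underline{\mathcal{E}}^j=\underline{\mathcal{F}}^j$, not through the tangent cone being the free Carnot group. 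Only then is the approximation theorem (Theorem~\ref{th:Curved}) applied---to $\tilde\omega$, whose holonomy on horizontal loops coincides with that of $\omega$---so that the quadratic error is genuinely $O(\ell^{2(k+1)})$. Your privileged-coordinate and weight-bookkeeping apparatus is sound as far as it goes (it is essentially Sections 3 and 4 of the paper), but without replacing $\omega$ by $\tilde\omega$ it cannot yield any exponent better than $4$.
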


The result of Theorem~\ref{th:Approx2} follows from computations on Euclidean space found in Section~\ref{sec:Examples}. In that Section, we also look at the examples of Riemannian manifolds, the Heisenberg group and the sub-Riemannian Hopf fibration.

\section{Euclidean space and Carnot groups} \label{sec:Flat}

\subsection{Sub-Riemannian manifolds} \label{sec:SR}
Throughout our paper, any manifold $M$ will be connected. In this section we will revisit some basic facts about sub-Riemannian manifolds. For more details, we refer to \cite{Mon02}.

On a manifold $M$, \emph{a sub-Riemannian structure} is a pair $(D, g)$, where $D$ is a subbundle of the tangent bundle $TM$ and $g\in \Gamma(\Sym^2 D^*)$ is a positive definite, smooth, symmetric tensor defined only on $D$. We will refer to the subbundle~$D$ as \emph{the horizontal bundle}. For any $v, w \in D_x$, we write $\langle v, w\rangle_g = g(v,w)$ and $|v|_g = \langle v,v\rangle_g^{1/2}$. We say that a continuous curve $\gamma: [a,b] \to M$ is \emph{horizontal} if is absolutely continuous and satisfies $\dot \gamma(t) \in D_{\gamma(t)}$ for almost every $t \in [a,b]$. For such a curve, we define its length $\ell(\gamma)$ by
$$\ell(\gamma) = \int_a^b | \dot \gamma(t) |_g \, dt.$$
We have the corresponding \emph{Carnot-Carath\'eodory metric} of the sub-Riemannian structure, 
\begin{equation} \label{CC} \sfd(x,y) = \inf \left\{ \ell(\gamma) \, : \, \begin{array}{c} \text{$\gamma:[0,1] \to M$ horizontal} \\ \text{$\gamma(0) = x$, $\gamma(1) = y$} \end{array} \right\}.\end{equation}
The distance in \eqref{CC} is in general not finite for two arbitrary points $x$ and $y$.

For a given horizontal bundle $D$, define $\underline{D}^1 = \Gamma(D)$ and for every $j = 1, 2, \dots$,
$$\underline{D}^{j+1} = \spn \left\{ X, [X,Y] \, : \, X \in \underline{D}^j, Y \in \Gamma(D) \right\}.$$
For any point $x \in M$, we define \emph{the growth vector} $\underline{n}(x) = (n_j(x))_{j=1}^\infty$ of $D$ at $x$ by
$$n_j(x) = \rank \left\{ X(x) \, : \, X \in \underline{D}^j \right\}.$$
We say that $\lambda$ is \emph{the step} of $D$ at $x$ if
$$\lambda = \min \left\{ k \in \mathbb{N} \, : \, \text{$n_j(x) = n_k(x)$ for any $j \geq k$} \right\}.$$
We often give the growth vector at $x$ only by $\underline{n}(x) = (n_1(x), \dots, n_\lambda(x))$, indicating that the remaining numbers equal $n_\lambda(x)$.

We say that $D$ is \emph{bracket-generating} if for every $x$ there exists some $j$ such that $n_j(x) = \dim M$. In other words, $D$ is called bracket-generating if the sections of~$D$ and their iterated brackets span the entire tangent bundle. If $D$ is bracket-generating, then the distance $\sfd$ defined in \eqref{CC} is always finite and its topology coincides with that of the manifold.

If $\underline{n}(\blank)$ is constant in a neighborhood of $x$, then $x$ is called a \emph{regular point} of $D$. The point $x$ is called \emph{singular} if it is not regular. The set of singular points of $D$ is closed with empty interior, cf.~\cite[Sect.\ 2.1.2, p.\ 21]{Jea14}. If all points of $M$ are regular, $D$ is called \emph{equiregular}. If $D$ is equiregular, the step $\lambda$ is constant and there is a flag of subbundles of $TM$,
$$D^1 = D \subseteq D^2 \subseteq \cdots \subseteq D^\lambda,$$
such that $\underline{D}^j = \Gamma(D^j)$.

\begin{remark}
A sub-Riemannian metric can alternatively be described as a possibly degenerate cometric $g^*$ on $T^*M$ such that the image $D$ of the map $\sharp: T^*M \to TM$, $\beta \mapsto \langle \beta, \blank \rangle_{g^*}$ is a subbundle. These two points of views are related by
$$\langle \beta, \alpha \rangle_{g^*} = \langle \sharp \beta, \sharp \alpha \rangle_{g}.$$
\end{remark}

\begin{remark}
We could have considered an even more general setting where we have a Finsler metric rather than a Riemannian metric on $D$, thus considering sub-Finsler geometry. The techniques and results of this paper are still valid in this sub-Finsler setting.
\end{remark}

\subsection{Carnot groups} \label{sec:Carnot}
Let $\frakn$ be a nilpotent Lie algebra. Assume that this Lie algebra is stratified, meaning that we can write $\frakn= \frakn_1 \times \cdots \times \frakn_\lambda$ with bracket relations satisfying
$$[\frakn_1, \frakn_j] = \left\{ \begin{array}{ll} \frakn_{j+1} & \text{if $1 \leq j \leq \lambda-1$,} \\ 0 & \text{if $j=\lambda$.} \end{array} \right. $$
Let $N$ be the connected, simply connected Lie group corresponding to $\mathfrak{n}$. Choose an inner product on $\mathfrak{n}_1$ and define a sub-Riemannian structure $(D,g)$ on $N$ by left translation of $\mathfrak{n}_1$ and this inner product. The sub-Riemannian manifold $(N,D,g)$ is then called a Carnot group of step $\lambda$. Observe that the case $\lambda =1$ is just an inner product space.

With slight abuse of notation, we will use $0$ for the identity element of $N$, even though~$N$ is abelian only if $\lambda =1$. For each $s >0$, define \emph{the dilation}~$\delta_s$ as the Lie group automorphism $\delta_s\colon N \to N$ uniquely determined by
\begin{equation} \label{CarnotDilations} (\delta_{s})_* A \in \frakn^j \mapsto s^j A.\end{equation}
Dilation $\delta_s$ is homothetic: it multiplies the Carnot-Carath\'eodory distance by $s$. Therefore, as a metric space, $(N,D,g)$ is geodesic, locally compact, it has a transitive group of isometries and homotheties. Conversely, a theorem of Le Donne \cite{LD15} asserts that metric spaces with such properties are Finsler Carnot groups.

Write $n_0= 0$ and define  $n_j$ as the dimension of $\frakn_1 \times \cdots \times \frakn_j$ for any $1 \leq j \leq \lambda$. Choose a basis $Z_1, \dots, Z_n$ of $\frakn$ such that
\begin{equation} \label{WeightedBasis} \spn\{ Z_{n_{j-1} + 1} , \dots, Z_{n_j} \} = \mathfrak{n}_j.
\end{equation}
Since $N$ is simply connected and nilpotent, the exponential map is a global diffeomorphism. We introduce exponential coordinates $(z_1, \dots, z_n)$ corresponding to the point $\exp (z_1 Z_1 + \dots + z_n Z_n)$. In these coordinates, we have
$$\delta_s\colon (z_1, \dots, z_i, \dots, z_n) \mapsto (s z_1, \dots, s^{w_i} z_i, \dots, s^\lambda z_n),$$
where $w_i$ is the number such that $Z_i \in \frakn_{w_i}$. We call $w_i$ \emph{the $i$-th weight}.

We define \emph{the radial vector field} $S$ by $S(x) =  \left. \frac{d}{dt} \delta_{1+t}(x) \right|_{t=0}.$
It follows that if $t \mapsto e^{tS}$ is the flow of $S$, then $\delta_s = e^{(\log s) S}$ for any $s > 0$. If $Z_1, \dots, Z_n$ is a basis of $\mathfrak{n}$ satisfying \eqref{WeightedBasis} and $(z_1, \dots, z_n)$ are the corresponding exponential coordinates, then
$$S = \sum_{i=1}^n w_i z_i \frac{\partial}{\partial z_i}.$$
Furthermore, since $\delta_s$ is a group homomorphism, it commutes with the left action, meaning that we also have
\begin{equation} \label{sfRLeft} S = \sum_{i=1}^n w_i z_i Z_i,\end{equation}
Here, we have used the same symbols for elements in $\mathfrak{n}$ and their corresponding left invariant vector fields.

As all Carnot groups are contractible spaces, we know that any principal bundle $\pi:P \to M$ can be trivialized. Furthermore, we have the following stronger statement by using our dilations $\delta_s$. Let $\pi:P \to N$ be a principal bundle with a connection $\omega$. Let $hS$ denote the horizontal lift of $S$ with respect to $\omega$ and let $t \mapsto e^{thS}$ be its flow. Define $\delta_s^\omega: P \to P$ by formula $\delta_s^\omega (p) = e^{(\log s)  hS}(p)$.
\begin{lemma} \label{lemma:RadialG}
\begin{enumerate}[\rm (a)]
\item There exists a unique principal bundle isomorphism $\zeta: P \to N \times P_0$ such that
\begin{enumerate}[\rm (i)]
\item $\zeta(p_0) = (0, p_0)$ for any $p_0 \in P_0$,
\item if $\zeta(p) = (x, p_0)$, then $\zeta( \delta_s^\omega(p)) = (\delta_s(x), p_0)$.
\end{enumerate}
\item For every $p_0 \in P_0$, there is a unique gauge $\sigma: N \to P$ such that $\sigma(0) = p_0$ and $(\sigma^*\omega)(S) = 0$. Furthermore, this satisfies $(\sigma^* \omega)(v) = 0$ for every $v \in T_0 N$.
\end{enumerate}
\end{lemma}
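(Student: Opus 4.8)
The plan is to build both statements from a single construction: $\omega$-parallel transport along the dilation rays $s \mapsto \delta_s(x)$. For fixed $x$ this ray extends to a smooth curve $[0,1] \to N$, $s \mapsto \delta_s(x) = (s^{w_1}x_1, \dots, s^{w_n}x_n)$, joining $0$ (at $s=0$) to $x$ (at $s=1$), so parallel transport along it is well defined on the whole segment. I would define $\Pi \colon P \to P_0$ by letting $\Pi(p)$ be the $\omega$-parallel transport of $p$ from $\pi(p)$ back to the origin along this ray, and set $\zeta(p) = (\pi(p), \Pi(p))$; equivalently $\Pi(p) = \lim_{s \to 0^+}\delta_s^\omega(p)$, the value at $s=0$ of the transported point. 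Then $G$-equivariance of $\zeta$ is inherited from that of parallel transport, property (i) holds because the ray through $0$ is constant, and property (ii) is the composition law along the single ray through $\pi(p)$: $\Pi(\delta_s^\omega(p)) = \Pi(p)$ while $\pi(\delta_s^\omega(p)) = \delta_s(\pi(p))$.

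The \emph{main obstacle} is regularity at the origin, the fixed point of every $\delta_s$ and a zero of $S$ (hence of $hS$), where the naive description of $\Pi$ as ``flow of $hS$ to time $-\infty$'' is singular. I would resolve this by replacing the singular parameter $t=\log s \to -\infty$ by the genuine parameter $s \in [0,1]$. Concretely, $\frac{d}{ds}\delta_s(x) = V(s,x)$ with $V(s,x) = \sum_i w_i s^{w_i-1} x_i\,\partial_i$, which is smooth on $[0,1]\times N$ including at $s=0$ (only the weight-one terms survive there, and each $s^{w_i-1}$ is a polynomial). Its $\omega$-horizontal lift $\widetilde V$ is a smooth time-dependent vector field on $P$, and $G(s,x,p_0) := \sigma_{p_0}(\delta_s(x))$ is exactly the solution of $\partial_s G = \widetilde V(s,G)$ with $G(0,x,p_0)=p_0$. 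Smooth dependence of ODE solutions on time, parameters, and initial conditions then shows $\sigma_{p_0}(x)=G(1,x,p_0)$ is smooth in $(x,p_0)$; solving the same equation backward from $s=1$ shows $\zeta$ and $\zeta^{-1}(x,p_0)=\sigma_{p_0}(x)$ are smooth, so $\zeta$ is a principal bundle isomorphism. Uniqueness follows because any such isomorphism covers $\id_N$, so $\pr_N\circ\zeta=\pi$, and then (i), (ii), the convergence $\delta_s^\omega(p)\to\Pi(p)$, and continuity force $\pr_{P_0}\circ\zeta=\Pi$.

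For (b) I would take $\sigma = \sigma_{p_0} = \zeta^{-1}(\,\cdot\,,p_0)$. Property (ii) reads $\sigma(\delta_s(x)) = \delta_s^\omega(\sigma(x)) = e^{(\log s)hS}(\sigma(x))$; differentiating at $s=1$ and using $\frac{d}{ds}\big|_{s=1}\delta_s(x)=S(x)$ gives $d\sigma(S(x)) = hS(\sigma(x))$, which is $\omega$-horizontal, so $(\sigma^*\omega)(S)=0$. For uniqueness, any gauge $\sigma'$ with $\sigma'(0)=p_0$ and $(\sigma'^*\omega)(S)=0$ makes $s\mapsto \sigma'(\delta_s(x))$ a horizontal lift of the ray; by uniqueness of horizontal lifts it equals $\delta_s^\omega(\sigma'(x))$, and letting $s\to0$ with continuity of $\sigma'$ gives $\Pi(\sigma'(x))=\sigma'(0)=p_0=\Pi(\sigma(x))$, whence $\sigma'=\sigma$ since $\Pi$ is a bijection on each fibre.

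Finally, for $(\sigma^*\omega)(v)=0$ on all of $T_0N$, I would use the radial condition in coordinates. Writing $A=\sigma^*\omega=\sum_k A_k\,dz_k$ with $A_k$ smooth (by the regularity above) and $S=\sum_k w_k z_k\partial_k$, the identity $(\sigma^*\omega)(S)=0$ becomes $\sum_k w_k z_k A_k(z)\equiv 0$ on $N$. Differentiating in $z_j$ and evaluating at $z=0$ annihilates every term except $w_j A_j(0)$, so $w_j A_j(0)=0$ and hence $A_j(0)=0$ for each $j$ (as $w_j\geq1$); that is, $(\sigma^*\omega)_0=0$. This last step genuinely uses the $C^1$-regularity of $\sigma$ at the origin established in the second paragraph, which is why the reparametrization argument there is the crux of the whole lemma.
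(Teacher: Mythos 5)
Your construction coincides with the paper's: there, too, $\zeta(p)=(x,\gamma_p(1))$ where $\gamma_p$ is the $\omega$-horizontal lift through $p$ of the ray $\gamma(t)=\delta_{1-t}(x)$ (i.e.\ parallel transport along the dilation ray back to the origin), smoothness is obtained from smooth dependence of ODE solutions, and $\sigma$ is defined by $\zeta(\sigma(x))=(x,p_0)$ exactly as you do. Your insistence on trading the singular parameter $\log s$ for $s$, so that the ray velocity $\sum_i w_i s^{w_i-1}x_i\,\partial_i$ is polynomial in $(s,x)$ up to and including $s=0$, is precisely what makes that invocation of ODE theory honest. Two small cautions. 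First, your $\widetilde V$ is \emph{not} a time-dependent vector field on $P$ alone: at a point $p$ with $\pi(p)=\delta_s(x)$ it equals $\tfrac1s hS(p)$, which is singular at $s=0$; the dependence on the ray label $x$ is essential, so you should phrase the lift equation as an ODE with parameter $x$ (equivalently, as a vector field on the pullback of $P$ under $(s,x)\mapsto\delta_s(x)$) — with that reading your smoothness argument is correct. Second, in part (a) you assert that (i) and (ii) \emph{force} $\zeta$ to cover $\id_N$; they do not (over $N=\mathbb{R}$ with the trivial flat bundle, $\zeta'(x,a)=(2x,a)$ satisfies (i) and (ii) but covers $x\mapsto 2x$). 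Covering the identity must be taken as part of the definition of a principal bundle isomorphism over $N$, as the paper implicitly does; granted that convention, your uniqueness arguments for (a) and (b) are correct, and they are more detailed than the paper's, which does not spell uniqueness out at all.

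Where you genuinely diverge is the final claim $(\sigma^*\omega)|_{T_0N}=0$. The paper deduces it from the assertion that vectors of the form $\frac{d}{ds}\delta_s(x)|_{s=0}$ span $T_0N$. As literally stated this is questionable: in exponential coordinates $\frac{d}{ds}\delta_s(x)|_{s=0}$ has only weight-one components, so such vectors span only $\mathfrak{n}_1$ when the step is at least $2$ (the argument can be repaired by reparametrizing rays as $u\mapsto\delta_{u^{1/w}}(x)$ and passing horizontality of $\sigma_*$ to the limit $u\to 0$). Your route instead differentiates the identity $\sum_k w_k z_k A_k(z)\equiv 0$ in the $z_j$ direction at the origin to get $w_jA_j(0)=0$, using only the $C^1$ regularity of $\sigma$ at $0$ that you established. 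This is cleaner and more robust than the paper's justification, and it correctly isolates the regularity at the fixed point as the crux of the lemma.
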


\begin{proof}
\begin{enumerate}[\rm (a)]
\item For a fixed $x$, define $\gamma(t) = \delta_{1-t}(x)$. Let $\gamma_p(t)$ denote this curve's horizontal lift to $p \in P_x$. Define then
$$\zeta(p) = (x, \gamma_p(1)).$$
By smooth dependence on initial conditions of ODEs, this is a well defined smooth map. Furthermore, from properties of horizontal lifts, $\zeta(p \cdot a) = \zeta(p) \cdot a$. Hence, $\zeta$ is a principal bundle isomorphism.
\item Define $\sigma$ by the relation $\zeta(\sigma(x)) = (x, p_0)$. We then have $\sigma_* S = hS\circ\sigma$ which is obviously in the kernel of $\omega$. The last statement follows from the fact that there exist points $x_1,\ldots,x_n\in N$ such that tangent vectors at $0 \in N$ of the form $\frac{d}{ds} \delta_{s}(x) |_{s=0}$ form of basis of $T_0 N$.
\end{enumerate}
\end{proof}
We call any gauge such as in Lemma~\ref{lemma:RadialG}~(b) \emph{radial, centered at $0$}. We can define radial gauges centered at an arbitrary point $x \in N$ similarly, replacing dilations $\delta_s$ with $\delta^x_s := l_x \circ \delta_s \circ l_{x^{-1}}$, with $l_x$ denoting left translations by $x$.

\subsection{Weighting of functions, forms and vector fields} \label{sec:WeightC}
For any $q$-form $\alpha$ on $N$ with $q \geq 0$, we define $\delta_s^* \alpha$ as the pull-back with respect to $\delta_s$. 
We say that a $q$-form $\alpha$ is \emph{homogeneous of weight} $m \in \mathbb{Z}$ if $\delta_s^* \alpha = s^m \alpha$. We say that $\alpha$ is of \emph{weight} $\geq m$ if $\lim_{s\downarrow 0} s^{-k} \delta_s^* \alpha = 0$ for any $k < m$. We remark that if $\alpha_j$ is homogeneous of weight $m_j$ (resp. of weight $\geq m_j$) for $j =1, 2$, then $\alpha_1 \wedge \alpha_2$ is homogeneous of weight $m_1 + m_2$ (resp. of weight $\geq m_1 + m_2$) and $\alpha_1 + \alpha_2$ is of weight $\geq \min\{m_1,m_2\}$.

Write $\sfd_0(x) := \sfd(0,x)$. Since $\{ \delta_s \}_{s > 0}$ is a dilation, any function $f$  of weight $\geq m$ satisfies $f = O(\sfd^m_0)$ as $x \to 0$. For a general smooth function $f$, we have a \emph{weighted Taylor polynomial at $0$}, $\Tay_0(f; k) = \sum_{m=0}^k f^{(m)}$, with each $f^{(m)}$ being homogeneous of weight $m$. Each $f^{(m)}$ is defined by
\begin{equation}
\label{mComponent} f^{(m)} = \left. \frac{1}{m !} \frac{d^m}{d s^m} \delta_s^* f \right|_{s=0}.\end{equation}
We can also give a similar Taylor expansion for forms. The one-forms $dz_1, \dots, dz_n$ are a global basis of $T^*N$ and $dz_i$ is homogeneous of weight $w_i$. It follows that every one-form $\alpha$ is of weight $\geq 1$ and has a weighted Taylor expansion~$\Tay_0(\alpha;k) = \sum_{m=1}^k \alpha^{(m)}$ as well. Here
$$\alpha^{(m)} = \sum_{i=1}^n f_i^{(m- w_i)} dz_i, \qquad \text{whenever } \alpha = \sum_{i=1}^n f_i dz_i,$$
with the convention that $f^{(m)} = 0$ whenever $m < 0$. Similarly, any $q$-form $\alpha$ has weight $\geq q$ and Taylor expansion $\Tay_0(\alpha; k)$ by defining $(\alpha \wedge \beta)^{(m)} = \sum_{k=1}^m \alpha^{(k)} \wedge \alpha^{(m-k)}$.

For a vector field $X$ on $N$, we define $\delta^*_sX$ such that for any function $f$,
$$(\delta_s^* X)(\delta_s^* f) = \delta_s^* (Xf).$$
In other words,
$$\delta_s^* X(x) = (\delta_{1/s})_* X(\delta_s(x)).$$
We define homogeneous vector fields and vector fields of weight $\geq m$ in analogy with our definition on forms.
If $X$ is any vector field of weight $\geq m$ and $f$ is any function, then $fX$ is of weight $\geq m$ as well. Furthermore, since
\begin{equation} \label{BracketDelta} [\delta_s^* X_1, \delta_s^* X_2] = \delta_s^* [X_1,X_2], \qquad X_1,X_2 \in \Gamma(TM).\end{equation}
it follows that if $X_j$ is homogeneous of weight $m_j$ (resp. of weight $\geq m_j$), then $[X_1, X_2]$ is homogeneous of weight $m_1 + m_2$ (resp. of weight $\geq m_1 + m_2$). Consider $Z_1, \dots, Z_n$ as the basis of $\mathfrak{n}$ satisfying \eqref{WeightedBasis}. Use the same symbols for the corresponding left invariant vector fields. By definition, $Z_i$ is a homogeneous vector field of weight $-w_i$ for $1 \leq i \leq n$. As a consequence, any vector field with values in $D$ has weight $\geq -1$ and all vector fields are of weight $\geq -\lambda$.

Observe that any homogeneous function, vector field or form is uniquely determined by their values in a neighborhood $U$ of $0$. Hence, we call a function on such a neighborhood $U$ \emph{homogeneous} if it is the restriction of a homogeneous function on~$N$. Similarly, for any neighborhood $U$ of $0$, there exists a neighborhood $0 \in \tilde U \subseteq U$ such that $\delta_s(\tilde U) \subseteq \tilde U$ for $s \leq 1$. We say that a function $f$ defined on $U$ is of \emph{weight $\geq m$} if $\lim_{s \downarrow 0} s^k \delta_s^* f|_{\tilde U} =0$ for any $k < m$. We use analogous definitions for forms and vector fields.

\begin{remark}
Notice that since the flow of $S$ commutes with $\delta_s$, the vector field $S$ is homogeneous of weight $0$.
\end{remark}

\subsection{Approximation of holonomy}
Let $M$ be a manifold. Define $\scrL(x, M)$ as the space of all absolutely continuous loops $\gamma:[0,1] \to M$ based at $x \in M$ with finite length relative to some (and hence any) complete Riemannian metric $\tilde g$ on $M$. 

\begin{definition} \label{def:Ol}
Let $\sfd$ be a metric on $M$. Let $\ell = \ell^{\sfd}: \scrL(x, M) \to [0, \infty]$ be the length relative to $\sfd$,
$$\ell(\gamma) = \sup \left\{\sum_{j=1}^k \sfd\big(\gamma(t_{j-1}), \gamma(t_j) \big) \, : k \geq 1, \quad 0 = t_0 < t_1 < \cdots < t_k =1 \right\} .$$
 Let $f$ be a real-valued function defined on a subset of $\mathbb{R}$ and let $F$ be a function defined on a subset of $\scrL(x, M)$ with values in a normed vector space $(E, \| \blank\|)$. We then say that $F = O(f(\ell))$ as $\ell \to 0$ if there exist positive constants~$\eps$ and~$C$ such that
 \begin{enumerate}[\rm (i)]
\item $f$ is defined on $(-\eps, \eps)$,
\item $F(\gamma)$ is well defined for any $\gamma$ with $\ell(\gamma) < \eps$,
\item for any $\ell(\gamma) < \eps$, we have
$$\| F(\gamma)\| \leq C f(\ell(\gamma)).$$
\end{enumerate}
\end{definition}
Note that the definition of $O(f(\ell))$ does not change if we replace the norm on~$E$ with an equivalent norm. Hence, if~$E$ is finite dimensional, we do not need to specify a norm in Definition~\ref{def:Ol}, since all norms are equivalent.

Let us now consider the special case when $(M, D,g)$ is a sub-Riemannian manifold and $\sfd$ is defined as in \eqref{CC}. The following observations are important to note.
\begin{enumerate}[\rm (i)]
\item If $\gamma$ is any absolutely continuous curve such that $\ell(\gamma) < \infty$, then $\gamma$ must be horizontal.
\item Any absolutely continuous curve $\gamma: [0,1] \to M$ in a Riemannian manifold $(M, \tilde g)$ can be parametrized by arc length, meaning that we can find a reparametrization $\tilde{\gamma}:[0,1] \to M$ satisfying $| \dot {\tilde{\gamma}}(t) |_{\tilde g} = \ell(\gamma)$ whenever $\dot{ \check{\gamma}}(t)$ exists, see e.g. \cite[Chapter 5.3]{Pet06}. By a similar argument, we can reparametrize any horizontal curve in a sub-Riemannian manifold $(M,D, g)$ to have constant speed.
\end{enumerate}

Let $\pi: P \to N$ be a principal bundle with connection $\omega$ with curvature $\Omega$. For any $\gamma \in \mathscr{L}(x, M)$, we define the corresponding \emph{radial disk} $\disk(\gamma): [0,1]^2 \to M$ by
$$\disk( \gamma)(s,t) = \delta_s^x(\gamma(t)).$$
\begin{theorem} \label{th:Flat} 
Let $p \in P_x$, $x \in N$ be an arbitrary point. Let $\sigma$ be a radial gauge centered at $x$ and satisfying $\sigma(x) = p$. Define $F_p: \scrL(x, M) \to \mathfrak{g}$ by
$$F_p(\gamma) = \int_{\disk(\gamma)} \sigma^*\Omega.$$
Assume that $\sigma^* \Omega$ is of weight $\geq m_\Omega$. Then
$$\Hol_p^\omega = \exp\left( - F_p+ O(\ell^{2m_\Omega}) \right).$$
\end{theorem}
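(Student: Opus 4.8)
The plan is to pass to the gauge $\sigma$, in which $P$ is trivialized and the connection becomes the $\mathfrak g$-valued one-form $A=\sigma^*\omega$ on $N$. Write $\Phi=\disk(\gamma)$, so that $\Phi(s,t)=\delta^x_s(\gamma(t))$, and recall that the holonomy is the endpoint $g(1)$ of the transport equation $\dot g(t)=-A(\dot\gamma(t))\,g(t)$, $g(0)=e$, i.e. the chronological (path-ordered) exponential of $-\int_0^1 A(\dot\gamma(t))\,dt$. I would first record the two elementary facts that make the radial gauge special. Since $\tfrac{d}{ds}\delta^x_s(y)=\tfrac1s S(\delta^x_s(y))$, with $S$ the radial field centered at $x$, the vector $\Phi_*\partial_s$ is everywhere proportional to $S$, and the defining property $(\sigma^*\omega)(S)=0$ of the radial gauge gives $A(\Phi_*\partial_s)=0$. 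Thus, writing $a=A(\Phi_*\partial_s)$ and $b=A(\Phi_*\partial_t)$, the pulled-back connection $\Phi^*A=a\,ds+b\,dt$ has $a\equiv 0$.

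The structure equation $\Omega=dA+\tfrac12[A,A]$ then pulls back to $\Phi^*\Omega=(\partial_s b-\partial_t a+[a,b])\,ds\wedge dt=\partial_s b\,ds\wedge dt$. Because $\Phi(0,t)=x$ for all $t$, we have $\Phi_*\partial_t=0$ at $s=0$, hence $b(0,t)=0$; integrating in $s$ gives $b(1,t)=\int_0^1\partial_s b\,ds$, so that
\[ F_p(\gamma)=\int_{\disk(\gamma)}\sigma^*\Omega=\int_0^1\!\!\int_0^1\partial_s b\,ds\,dt=\int_0^1 b(1,t)\,dt=\int_\gamma\sigma^*\omega. \]
This is the non-abelian Stokes phenomenon in radial gauge: the surface integral of the curvature collapses to the boundary line integral of the connection. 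It remains to compare the chronological exponential defining $\Hol^\omega_p(\gamma)$ with the ordinary exponential of $-F_p=-\int_0^1 b(1,t)\,dt$. By the Magnus expansion — equivalently a direct Gr\"onwall/BCH estimate, legitimate once $\ell(\gamma)$ is small enough that the relevant transport stays in one chart of $G$ — the two differ by $\exp$ of a term bounded by a constant times $\big(\int_0^1|b(1,t)|\,dt\big)^2$. Hence the whole theorem reduces to the single estimate $\int_0^1|(\sigma^*\omega)(\dot\gamma(t))|\,dt=O(\ell^{m_\Omega})$.

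To obtain that estimate I would first upgrade the weight hypothesis from $\Omega$ to $\omega$. Cartan's formula, together with $\iota_S(\sigma^*\omega)=0$, the structure equation, and $\iota_S[\sigma^*\omega,\sigma^*\omega]=2[\iota_S\sigma^*\omega,\sigma^*\omega]=0$, yields $\mathcal L_S(\sigma^*\omega)=\iota_S(\sigma^*\Omega)$, whose right-hand side has weight $\ge m_\Omega$ since contracting a weight-$\ge m_\Omega$ two-form with the weight-$0$ field $S$ does not lower weight. Now $\mathcal L_S$ acts as multiplication by $m$ on the weight-$m$ component (because the flow of $S$ is $\delta_{e^t}$ and $\delta_s^*$ scales a homogeneous weight-$m$ form by $s^m$); so every component of $\sigma^*\omega$ of weight $m$ with $1\le m<m_\Omega$ must vanish, and one-forms carry no weight-$0$ part, whence $\sigma^*\omega$ itself has weight $\ge m_\Omega$.

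Finally, writing $\sigma^*\omega=\sum_i f_i\,dz_i$ in exponential coordinates, weight $\ge m_\Omega$ forces $f_i$ to have weight $\ge m_\Omega-w_i$, so $|f_i(\gamma(t))|=O(\sfd(x,\gamma(t))^{\,m_\Omega-w_i})=O(\ell^{m_\Omega-w_i})$. After reparametrizing $\gamma$ by constant speed, expanding $\dot\gamma$ in the left-invariant horizontal frame and using that the $\partial_{z_i}$-component of each generator is a homogeneous polynomial of weight $w_i-1$ gives the ball-box bound $\int_0^1|\tfrac{d}{dt}(z_i\circ\gamma)|\,dt=O(\ell^{w_i})$; multiplying and summing yields $\int_0^1|(\sigma^*\omega)(\dot\gamma)|\,dt=O(\ell^{m_\Omega})$, with a constant depending only on the weight bounds on the compact set $\{y:\sfd(x,y)\le\eps\}$, hence uniform over all short loops. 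Combining this with the Magnus bound gives $\Hol^\omega_p(\gamma)=\exp(-F_p(\gamma)+O(\ell^{2m_\Omega}))$. I expect the main obstacle to be precisely this last, \emph{uniform}, step: the weight machinery of Section~\ref{sec:WeightC} controls each loop after rescaling, but converting the per-loop scaling into one constant $C$ valid for all loops with $\ell(\gamma)<\eps$ is exactly what the ball-box estimate and the uniform coefficient bounds — and thereby the Carnot (privileged-coordinate) structure — are needed for.
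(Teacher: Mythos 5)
Your proof is correct, and its core mechanism is the same as the paper's: work in the radial gauge, use $(\sigma^*\omega)(S)=0$ together with the structure equation to transfer the weight hypothesis from $\sigma^*\Omega$ to the connection form along the loop, then conclude with a first-order ODE estimate whose error is quadratic in the size of the integrand. The differences are organizational, and each variant is legitimate. Where you differentiate --- Cartan's formula giving $\mathcal{L}_S(\sigma^*\omega)=\iota_S(\sigma^*\Omega)$, so that vanishing of the low homogeneous components forces $\sigma^*\omega$ to have weight $\geq m_\Omega$ --- the paper integrates: since $(\sigma^*\omega)(S)=0$, it writes $(\sigma^*\omega)(\dot\gamma_s(t))=\int_0^s\tfrac1r(\sigma^*\Omega)\bigl(S(\gamma_r(t)),\dot\gamma_r(t)\bigr)\,dr$, which is your identity in integrated form (and, after integrating in $t$, is exactly your ``non-abelian Stokes'' collapse $F_p(\gamma)=\int_\gamma\sigma^*\omega$). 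Your Magnus step is the paper's Lemma~\ref{lemma:Picard}, proved by Picard iteration in the appendix; quoting the Magnus expansion with an $L^1$-quadratic error bound is an acceptable substitute. Finally, the uniformity issue you flag at the end is not actually an obstacle, and you have in effect already resolved it: weight $\geq m$ for a smooth function gives a pointwise bound $|f|\leq C\,\sfd_0^{\,m}$ on a fixed neighborhood of the base point with $C$ independent of the loop, and $\sfd_0(\gamma(t))\leq\ell(\gamma)/2$ for every loop, so your final estimate is automatically uniform over all loops with $\ell(\gamma)<\eps$. The paper packages this same point differently, via the dilation Lemma~\ref{lemma:StoL}: it proves the bound for rescaled loops $\delta_s\gamma$ with error $Cs^{2m_\Omega}$ and lets the scaling lemma convert that into $O(\ell^{2m_\Omega})$; your direct route bypasses that lemma entirely.
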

The proof is given in Section~\ref{sec:FlatProof}

\subsection{Theory and proof for Theorem~\ref{th:Flat}}
\subsubsection{Metric spaces with dilations}
Let $(M, \sfd)$ be a metric space with possibly some points being of infinite distance. We call a collection of maps $\{\delta_s: M \to M \}_{s > 0}$ \emph{dilations} if
$$\delta_1 = \id_M, \qquad \delta_{s_1} \circ \delta_{s_2} = \delta_{s_1 s_2}.$$
and if $\sfd(\delta_s(x), \delta_s(y)) = s \, \sfd(x,y)$.

Assume that $M$ is a smooth manifold and define $\scrL(M) = \coprod_{x \in M} \scrL(x, M)$. For any curve $\gamma \in \scrL(M)$, define $\delta_s \gamma$ as the loop $\delta_s\gamma(t) = \delta_s(\gamma(t))$. Let $\sfd$ be a metric on~$M$ and assume that this metric has dilations $\{ \delta_s\}_{s >0}$ that are also diffeomorphisms. 
\begin{lemma} \label{lemma:StoL}
 Let $F: \scrL(M) \to E$ be a function with values in a normed vector space $(E, \| \blank \|)$. Let $x_0 \in M$ be an arbitrary point. Assume that there are constants $\eps \in(0,1]$ and $C > 0$ such that for every $y \in \{ \delta_s (x_0)\}_{s \geq 1}$ and $\gamma \in \scrL(y, M)$ with $\ell(\gamma) \leq \eps$,
$$\|F(\delta_s \gamma)\| \leq C f(\eps s), \qquad \text{whenever $0 < s \leq 1$}.$$
Then $F = O(f(\ell))$ at $x_0$.
\end{lemma}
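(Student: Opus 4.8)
The plan is to run a single rescaling argument powered by the $1$-homogeneity that the dilations force on everything in sight. The first thing to record is that the length functional $\ell = \ell^{\sfd}$ is itself $1$-homogeneous: since $\sfd(\delta_s(x),\delta_s(y)) = s\,\sfd(x,y)$, each partition sum in the definition of $\ell(\delta_s\gamma)$ is exactly $s$ times the corresponding sum for $\gamma$, so $\ell(\delta_s\gamma) = s\,\ell(\gamma)$. The geometric content is then that a \emph{short} loop based at $x_0$ is, after applying the diffeomorphism $\delta_{1/s}$, a loop of one \emph{fixed} reference length $\eps$ based at a point of the orbit $\{\delta_{s'}(x_0)\}_{s'\ge 1}$ — and loops of this shape are precisely what the hypothesis bounds. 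The subtle point worth flagging already is that the hypothesis deliberately lets the base point $y$ roam over the whole orbit rather than sitting at $x_0$; this is exactly what makes room for the fact that blowing a loop up moves its base point off $x_0$.

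Concretely, I would take an arbitrary $\gamma \in \scrL(x_0,M)$ with $0 < \ell(\gamma) \le \eps$ and set
$$ s = \frac{\ell(\gamma)}{\eps} \in (0,1], \qquad \beta = \delta_{1/s}\gamma. $$
Because the $\delta_r$ are diffeomorphisms, $\beta$ is again absolutely continuous of finite length, so $\beta \in \scrL(y,M)$ with $y = \delta_{1/s}(x_0)$; and $1/s = \eps/\ell(\gamma) \ge 1$ places $y$ on the orbit $\{\delta_{s'}(x_0)\}_{s'\ge 1}$. Homogeneity of $\ell$ gives $\ell(\beta) = (1/s)\ell(\gamma) = \eps$, so $\beta$ is admissible. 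Applying the hypothesis to this $\beta$ with dilation parameter $s$, and using the semigroup law $\delta_s\beta = (\delta_s\circ\delta_{1/s})\gamma = \delta_1\gamma = \gamma$, yields
$$ \|F(\gamma)\| = \|F(\delta_s\beta)\| \le C f(\eps s) = C f\big(\ell(\gamma)\big), $$
which is exactly condition (iii) of Definition~\ref{def:Ol} with the same constants $\eps$ and $C$. Note that no monotonicity or positivity of $f$ is needed — the bound is simply copied across — and that $f$ is only ever evaluated at the attainable values $\ell(\gamma)\in[0,\eps]$, so the formal requirement that $f$ live on $(-\eps,\eps)$ is harmless since negative lengths never occur.

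I do not expect a genuine analytic obstacle here; the work is in the bookkeeping, and the two places demanding care are exactly where the hypotheses are used. First, the passage from $\gamma$ to $\beta = \delta_{1/s}\gamma$ must stay inside $\scrL(M)$, which is precisely where the assumption that the dilations are diffeomorphisms is essential. Second, the chosen $s$ must land in the admissible window $(0,1]$, and this is what forces the restriction to $\ell(\gamma)\le\eps$, matching the $\eps$ appearing in the conclusion. The one case the clean scaling genuinely cannot reach is the degenerate loop $\ell(\gamma)=0$ (the constant loop at $x_0$), since then $s=0$ is not allowed; this is the main, and essentially only, obstacle. For it I would argue directly, taking $\beta=\gamma$ (based at $x_0=\delta_1(x_0)$, on the orbit) and $s=1$ to obtain $\|F(\gamma)\|\le C f(\eps)$, and then either absorb this single value into the constant or appeal to the fact that $F$ vanishes on the trivial loop in the situations where the lemma is applied. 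Everything else reduces to the one rescaling identity above.
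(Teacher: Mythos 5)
Your proof is correct and takes essentially the same route as the paper's one-line argument: both rescale a short loop $\gamma$ by $\delta_{\eps/\ell(\gamma)}$ to a loop of fixed length $\eps$ based at a point of the orbit $\{\delta_s(x_0)\}_{s\geq 1}$, then apply the hypothesis with $s=\ell(\gamma)/\eps$ and the semigroup law to recover $\|F(\gamma)\|\leq C f(\ell(\gamma))$. Your separate treatment of the degenerate constant loop $\ell(\gamma)=0$ addresses an edge case the paper's proof silently skips, and is a welcome refinement rather than a deviation.
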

\begin{proof}
For any $\gamma \in \scrL(x_0)$ with $\ell(\gamma) < \eps$, define $\tilde \gamma = \delta_{\eps/ \ell(\gamma)} \gamma$. Then by our assumptions
$$\| F(\gamma) \| = \| F(\delta_{\ell(\gamma)/\eps} \tilde \gamma) \| \leq C f(\ell(\gamma)).$$
\end{proof}

Notice that $F$ in Lemma~\ref{lemma:StoL} needs only be defined for any $\gamma \in \scrL(y, M)$, $y \in \{ \delta_s (x_0)\}_{s \geq 1}$ with $\ell(\gamma) \leq \eps$. If $x_0$ is a fixed point for every $\delta_s$, then we only need to consider $y = x_0$ in Lemma~\ref{lemma:StoL}. If $\sfd$ only has finite values and $\delta_s$ has a fixed point~$x_0$, then this point is unique since $\lim_{s \downarrow 0} \delta_{s}(x) = x_0$ for any $x\in M$.

For a Carnot group, the maps $\delta_s$ defined in \eqref{CarnotDilations} are dilations in the above sense with fixed point $0$. In general, for every $x \in N$, the maps $\delta_s^x = l_x \circ \delta_s \circ l_{x^{-1}}$ are dilations with fixed point $x$.

\subsubsection{Forms on Carnot groups and functionals}
Let $(N, D, g)$ be a Carnot group with dilations $\delta_s$. We want to consider functions on loop space $\scrL(x,N)$ related to forms.
\begin{lemma} \label{lemma:PhiAlpha}
Let $\alpha$ be a two-form on $N$ with values in some finite dimensional vector space $\mathfrak{g}$. Assume that $\alpha$ is of weight $\geq m$ at $x$.
Consider the map
\begin{equation} \label{PhiAlpha} \Phi^\alpha: \scrL(x,N) \to \mathfrak{g}, \qquad \Phi^\alpha(\gamma) = \int_{\disk(\gamma)} \alpha.\end{equation}
Then $\Phi^\alpha = O(\ell^m)$.
\end{lemma}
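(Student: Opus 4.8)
The plan is to deduce the estimate from the scaling Lemma~\ref{lemma:StoL} together with one explicit computation in exponential coordinates. First I would reduce to $x=0$. Since $\delta^x_s = l_x\circ\delta_s\circ l_{x^{-1}}$ and $l_x$ is an isometry of $\sfd$ that preserves lengths and carries radial disks centered at $0$ to radial disks centered at $x$, replacing $\alpha$ by $l_x^*\alpha$ turns the hypothesis into ``$l_x^*\alpha$ is of weight $\geq m$ at $0$'' while leaving both $\|\Phi^\alpha(\gamma)\|$ and $\ell(\gamma)$ unchanged; so I may assume $x=0$. As $0$ is the unique fixed point of the dilations $\delta_s$, Lemma~\ref{lemma:StoL} then reduces the claim to producing $\eps\in(0,1]$ and $C>0$ with
$$\|\Phi^\alpha(\delta_s\gamma)\|\leq C(\eps s)^m\qquad\text{whenever }\gamma\in\scrL(0,N),\ \ell(\gamma)\leq\eps,\ 0<s\leq1.$$

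For the computation I would use the identity $\disk(\delta_s\gamma) = \delta_s\circ\disk(\gamma)$, which follows from $\delta_r\circ\delta_s=\delta_{rs}$. Reparametrizing the unit square by the orientation-preserving map $(r,t)\mapsto(rs,t)$ gives $\Phi^\alpha(\delta_s\gamma)=\int_{[0,s]\times[0,1]}\disk(\gamma)^*\alpha$. Writing $\alpha=\sum_{i<j}\alpha_{ij}\,dz_i\wedge dz_j$, setting $a_i(t)=z_i(\gamma(t))$, and using $z_i(\delta_u y)=u^{w_i}z_i(y)$ to expand $\disk(\gamma)^*(dz_i\wedge dz_j)$, I obtain
$$\Phi^\alpha(\delta_s\gamma)=\sum_{i<j}\int_0^s u^{w_i+w_j-1}\Big(\int_0^1\alpha_{ij}(\delta_u\gamma(t))\,(w_ia_i\dot a_j-w_ja_j\dot a_i)\,dt\Big)\,du.$$
Evaluating $\delta_u^*\alpha$ on the constant vector fields $\partial_i,\partial_j$ shows that $\alpha$ being of weight $\geq m$ forces each coefficient $\alpha_{ij}$ to be a function of weight $\geq m-w_i-w_j$; hence, since $\sfd_0(\delta_u\gamma(t))=u\,\sfd_0(\gamma(t))\leq u\eps$, one has $|\alpha_{ij}(\delta_u\gamma(t))|\lesssim(u\eps)^{(m-w_i-w_j)_+}$.

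The crux is to bound the inner $t$-integral by $\eps^{w_i+w_j}$. Here I would use that $z_i$ is homogeneous of weight $w_i$, so $|a_i(t)|\lesssim\sfd_0(\gamma(t))^{w_i}\leq\eps^{w_i}$, together with the fact that writing the horizontal velocity as $\dot\gamma=\sum_{k\leq n_1}c_kZ_k$ gives $\dot a_j=\sum_k c_k\,(Z_kz_j)(\gamma)$, where $Z_kz_j$ is homogeneous of weight $w_j-1\geq0$ and therefore $O(\sfd_0^{w_j-1})$. This yields $\int_0^1|\dot a_j|\,dt\lesssim\eps^{w_j-1}\ell(\gamma)\leq\eps^{w_j}$, and hence $\int_0^1|w_ia_i\dot a_j-w_ja_j\dot a_i|\,dt\lesssim\eps^{w_i+w_j}$. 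Feeding this and the coefficient bound into the $u$-integral and computing $\int_0^s u^{w_i+w_j-1}u^{(m-w_i-w_j)_+}\,du$ produces, for each of the finitely many terms, a bound $\lesssim(\eps s)^{\max(m,\,w_i+w_j)}\leq(\eps s)^m$, where the last step uses $\eps s\leq1$. Summing the terms gives the displayed scaling inequality. I expect the estimate $\int_0^1|\dot a_j|\,dt\lesssim\eps^{w_j}$ --- the rate at which a weight-$w_j$ coordinate can vary along a horizontal curve of length $\eps$ --- to be the delicate step, as it is precisely where the weighted (Carnot) homogeneity of the generators $Z_k$ is essential.
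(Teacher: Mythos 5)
Your proof is correct, and its skeleton is the same as the paper's: reduce to the fixed point $x=0$ of the dilations (the paper does not even spell out the left-translation argument you give), then verify the hypothesis of the scaling Lemma~\ref{lemma:StoL} by estimating the integral over the radial disk of $\delta_s\gamma$ using weighted homogeneity. The difference lies in how that estimate is organized. The paper contracts with the radial field: writing $\iota_S\alpha=\sum_j f_j\,dz_j$, it integrates radially first to produce a one-form $\beta_s$, then bounds the cometric norm $|\beta_s|_{g^*}$ along $\gamma$ and pairs with the velocity via $|\beta_s(\dot\gamma)|\le|\beta_s|_{g^*}\,|\dot\gamma|_g$. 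You instead pull the whole form back to the parameter rectangle $[0,s]\times[0,1]$ and bound each coordinate term separately, using $|a_i|\lesssim\eps^{w_i}$ and $\int_0^1|\dot a_j|\,dt\lesssim\eps^{w_j-1}\ell(\gamma)$ via the homogeneity of $Z_kz_j$. The two computations are the same identity in disguise --- the $du\wedge dt$ coefficient of $\disk(\gamma)^*\alpha$ is exactly $\tfrac1u(\delta_u^*\alpha)(S,\dot\gamma)$ --- but your bookkeeping buys one thing the paper's write-up glosses over: the contributions of the non-horizontal differentials $dz_j$, $j>n_1$, which do not vanish on $D$ away from the origin. In the paper these would enter $|\beta_s|_{g^*}$ through the polynomial factors $Z_kz_j$ of weight $w_j-1$, yet only constants $C_j$ with $1\le j\le n_1$ appear in its estimate; your step bounding $\int_0^1|\dot a_j|\,dt$, which you rightly single out as the delicate point, is precisely where those terms get handled, so your version is in this respect the more complete of the two.
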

\begin{proof}
It is sufficient to consider $\mathfrak{g} = \mathbb{R}$ and $x = 0$. Choose an arbitrary loop $\gamma \in \mathcal{L}(0,U)$ and assume that it is parametrized to have constant speed, i.e.  $| \dot \gamma(t) |_g = \ell(\gamma)$ at all points where the derivative exists. Write $\iota_S \alpha = \sum_{j=1}^n f_j dz_j$ and define
\begin{equation} \label{BetaS} \beta_s = \sum_{j=1}^n s^{w_j} \left(\int_0^1 r^{w_j-1} (\delta_{sr}^*f_j) dr \right) dz_j.\end{equation}
We compute
$$ \int_{\disk(\delta_s \gamma)}\alpha = \int_0^1 \int_0^1 \frac{1}{r} (\delta_{sr}^*\alpha)(S, \dot \gamma(t)) \, dr \, dt = \int_0^1 \beta_s(\dot \gamma(t)) \, dt,$$
and so
$$\left\| \Phi^\alpha (\gamma) \right\| \leq  \ell(\gamma) \int_0^1 | \beta_s |_{g^*}(\gamma(t)) \, dt.$$
Since $\iota_S \alpha$ is of weight $ \geq m$, each $f_j$ is of weight $\geq m-w_j$. It follows that for sufficiently small $\eps > 0$, there are constants $C_j$, $1 \leq j \leq n_1$ such that $\|f_j(x) \| \leq C_j \sfd(x)^{m-1}$ whenever $\sfd_0( x) < \eps$. Combining these bounds, we obtain,
$$|\beta_s|_{g^*}(x) \leq \frac{s^{m}}{m} \sum_{i=1}^{n_1} C_j \sfd_0(x)^{m-1} .$$
Defining $C_0 = \max_{1 \leq j \leq n_1} C_j$ and using that $\sfd_0(\gamma(t)) \leq \frac{\ell(\gamma)}{2}$, gives us the inequality
$$\|\Phi^\alpha(\gamma)\| \leq \frac{n_1 \ell(\gamma)^{m} C_0}{2^m m } s^{m} <  \frac{n_1 \eps^{m} C_0}{m } s^{m} ,$$
whenever $\ell(\gamma) < 2\eps$. The result follows from Lemma~\ref{lemma:StoL}.
\end{proof}

\subsubsection{Proof of Theorem~\ref{th:Flat}} \label{sec:FlatProof}

Choose an embedding of $\mathfrak{g}$ into $\gl(q, \mathbb{C})$ and put a Banach algebra norm $\| \blank \|$ on $\mathfrak{gl}(q, \mathbb{C})$. We will show that there exists an $\eps >0$ and $C$ such that
\begin{enumerate}[\rm (i)]
\item $\exp^{-1} \Hol^\omega_{\sigma(0)}(\gamma)$ is well defined whenever $\ell(\gamma) < \eps$,
\item For every $0 < s \leq 1$ and $\gamma$ with $\ell(\gamma) < \eps$, 
$$\|\exp^{-1} \Hol(\delta_s\gamma) + \Phi^{\sigma^* \Omega}(\delta_s\gamma) \| \leq C s^{2m_\Omega}.$$
\end{enumerate}

Let $\gamma$ be an arbitrary curve with length $\ell(\gamma) < \eps$. Without loss of generality, we may assume that $\gamma$ is parametrized to have constant speed, so $| \dot \gamma(t) |_g = \ell(\gamma)$ whenever it is defined. Let $\sigma$ be a radial gauge. Give $p \in P$ coordinates $(x, a) \in N \times G$ if $p = \sigma(x) \cdot a$ and define $\gamma_s = \delta_s\gamma$. In these coordinates, the horizontal lift $(\gamma_s(t), a_s(t))$ of $\gamma_s(t)$ to $\sigma(0)$ is given by
$$\dot a_s(t) \cdot a_s(t)^{-1} = - \sigma^* \omega(\dot \gamma_s(t)) =: A_s(t), \qquad a_s(0) = 1,$$
and $a_s(1) = \Hol^\omega_{\sigma(0)}(\gamma_s).$

By Lemma~\ref{lemma:Picard}, there is an $\eps_0 < 1$ such that whenever
$$\| A_s\|_{L^\infty} = \max_{t \in [0,1]} \|A_s(t) \|_{L^\infty} < \eps_0,$$
then $\exp^{-1} a_s(t)$ is well defined and
$$\left\| \exp^{-1} a_s(1) - \int_0^1 A_s(t) \, dt \right\|   \leq C_2 \| A_s\|^2_{L^\infty},$$
for some constant $C_2$. Since $(\sigma^* \omega)(S) = 0$,
\begin{align*}
- A_s(t) & = \int_0^s \frac{\partial}{\partial r} (\sigma^* \omega)(\dot \gamma_r) \, dr = \int_0^s d(\sigma^* \omega)\left( \frac{\partial}{\partial r} \gamma_r, \dot \gamma_r \right) \, dr \\
&  = \int_0^s \frac{1}{r} (\sigma^* \Omega)\left( S(\gamma_r), \dot \gamma_r \right) \, dr  .
\end{align*}
Since $\iota_S \sigma^*\Omega$ is of weight $\geq m_\Omega$, there is a constant $C_3$ such that
$$\| A_s\|_{L^\infty} \leq C_3 s^{m_0} \ell(\gamma)^{m_0}.$$
Hence, if we choose $C_3 \eps^{m_0} < \eps_0$,
$$\left\| \exp^{-1} a_s(1) - \int_0^1 A_s(t) \, dt \right\| = \left\| \exp^{-1} \Hol^\omega_p (\delta_s\gamma) - F_p(\delta_s \gamma) \right\| \leq  C_2C_3^2 \eps^{2m_0} s^{2m_\Omega}.$$
The result now follows.

\section{Gauge-free formulation} \label{sec:GaugeFree}
\subsection{Equivariant forms and adjoint bundle}
Let $\pi: P \to M$ be a principal bundle with structure group $G$. Let $\mathfrak{g}$ be the Lie algebra of $G$. We can then introduce a vector bundle $\Ad(P) \to M$ called \emph{the adjoint bundle} by considering the product bundle of $P$ and $M \times \mathfrak{g}$ divided out by the right action
$$(p, A) \cdot a = (p \cdot a, \Ad(a^{-1}) A), \qquad p \in P, A \in \mathfrak{g}, a \in G.$$
We write $[p,A] \in \Ad(P)$ for the equivalence class of $(p,A) \in P \times \mathfrak{g}$.

Sections of $\Ad(P)$ correspond to \emph{$G$-equivariant} functions on $P$. A function $\varphi: P \to \mathfrak{g}$ is called $G$-equivariant if $f(p \cdot a) = \Ad(a^{-1}) f(p)$. This function can be considered as a section of $\Ad(P)$ through the identification $\varphi(x) = [p, \varphi(p)]$, $p \in P_x$. Similarly, a $\mathfrak{g}$-valued form $\alpha$ on $P$ is called $G$-equivariant if
$$\alpha(v_1 \cdot a, \dots, v_q \cdot a) = \Ad(a^{-1}) \alpha(v_1, \dots, v_q).$$
Any $G$-equivariant form that vanishes on $\ker \pi_*$ can be considered as a form on $M$ with values in $\Ad(P)$.

Let $\omega$ be a connection on $\pi$. Let $hX$ denote the horizontal lift of a vector field $X$ with respect to $\omega$. For any $G$-equivariant function $\varphi$, the function $hX \varphi$ is still $G$-equivariant. We denote the corresponding section of $\Ad(P)$ by $\nabla_X^\omega \varphi$, giving us an affine connection $\nabla^\omega$ on $\Ad(P)$. Remark that for any gauge $(\sigma, U)$, we have
\begin{equation} \label{SigmaDer} \sigma^*\nabla_X^\omega \varphi = X \sigma^*(\varphi)+ [\sigma^* \omega(X), \sigma^*\varphi].\end{equation}

\subsection{Explicit formulas of weighted components} \label{sec:Explicit}
Let $N$ be a Carnot group with stratified Lie algebra $\mathfrak{n}$ and with exponential coordinates $z$. We give two formulas for the $m$-th homogeneous component of a general function $f$ relative to $0 \in N$. Let $n$ be the dimension of $N$ and define an index set $J_n$ by
\begin{equation} \label{Jindex} J_n(0) = \{ \emptyset \}, \qquad J_n(j) = \{ 1, \dots, n\}^j, \qquad J_n = \bigcup_{j=0}^\infty J_n(j).
\end{equation}
and define functions $j, w, |\blank|: J_n \to \mathbb{N}$ by
\begin{equation} \label{SizeIndex}
 j(\mu) = j, \qquad  w(\mu) = \sum_{i=1}^j w_{\mu_i} , \qquad |\mu| =\sum_{i=1}^j \mu_i, \qquad \text{for any $\mu \in J_n(j)$.}
\end{equation}
We use the convention that $w(\emptyset) = 0$ and $|\emptyset| = 0$.

By rearranging the terms of the usual Taylor expansion, we have
\begin{equation} \label{TaylorExp} f^{(m)}(z) = \sum_{\begin{subarray}{c} \mu \in J_n \\ w(\mu) = m \end{subarray}} \frac{1}{j(\mu)!} \frac{\partial f}{\partial z_\mu}  (x_0) z_\mu,
\end{equation}
where $z_\mu = z_{\mu_1}  \cdots z_{\mu_j}$ and  $\frac{\partial}{\partial z_\mu} = \frac{\partial}{\partial z_{\mu_1}} \cdots \frac{\partial}{\partial z_{\mu_j}}$ for any $\mu \in J_n(j)$.
We use the conventions $z_\emptyset = 1$ and $\frac{\partial}{\partial z_\emptyset}f = f$. On the other hand, for any element $Z \in \mathfrak{n}$, we have the Taylor expansion $f( \exp t Z) = \sum_{j=0}^k \frac{1}{j!} t^j(Z^jf)(0) + O(t^{k+1}).$
Since $z_1, \dots, z_n$ are exponential coordinates, we obtain
$$f(z_1, \dots z_n) = \sum_{\begin{subarray}{c} \mu \in J_n \\ j(\mu) \leq k \end{subarray}} \frac{z_\mu}{j(\alpha)!} (Z_\mu f)(0) + O(|z|^{k+1}).$$
with $Z_\mu$ defined analogously to $\frac{\partial}{\partial z_\mu}$. Collecting all the homogeneous components of this sum with $k$ sufficiently large, we obtain
\begin{equation} \label{HomLeft} f^{(m)}(z) = \sum_{\begin{subarray}{c} \mu \in J_n \\ w(\mu) = m \end{subarray}} \frac{z_\mu}{j(\mu)!} (Z_\mu f)(0). \end{equation}

\subsection{Weighted components of equivariant forms} \label{sec:WEF}
In order to introduce approximations that are independent of choice of gauge, we will introduce Taylor expansions of $\Ad(P)$-valued forms. Let $\pi: P \to N$ be a principal bundle with connection~$\omega$. Define $\delta_s^\omega : P \to P$ and $\zeta: P \to N \times P_0$ as in Section~\ref{sec:Carnot}. For any $\varphi \in \Gamma(\Ad(P))$ and $p \in P_0$, introduce $\Tay_p^\omega(\varphi;k) = \sum_{m=0}^k\varphi^{(m)}$ where $\varphi^{(m)}: N \to \mathfrak{g}$ is defined by
$$\varphi^{(m)}(x) = \left. \frac{d^m}{ds^m} \varphi\left( \delta_s^\omega(\zeta^{-1}(x, p)) \right) \right|_{s=0}.$$
We emphasize that $\Tay_p^\omega(\varphi; k)$ is a function on $N$, not on $P$.

We want to use \eqref{TaylorExp} to give an explicit representation of $\varphi^{(m)}$. Write $\partial_j = \frac{\partial}{\partial z_j}$ and let $\nabla$ be the connection on $N$ determined by $\nabla \partial_j = 0$. Denote the induced connection by $\nabla$ and $\nabla^\omega$ on $\Ad(P)$-valued forms by the symbol $\nabla^{\omega}$ as well. Next, for any $j \geq 1$, define $\nabla^{\omega, 1}_X \varphi = \nabla^{\omega}_X \varphi$ and iteratively
$$\nabla^{\omega,j+1}_{X_0, X_1, \dots, X_j} \varphi = \nabla_{X_0}^\omega \nabla^{\omega,j}_{X_1, \dots, X_j} \varphi -  \nabla^{\omega,j}_{\nabla_{X_0} X_1, \dots, X_j} \varphi - \cdots - \nabla^{\omega,j}_{X_1, \dots, \nabla_{X_0} X_j} \varphi. $$
Finally, for any $\mu \in J_n(j)$, we write
$$\nabla_\mu^\omega = \nabla^{\omega,j}_{\partial_{\mu_1}, \dots, \partial_{\mu_j}}.$$
Then we have the expansion
\begin{equation} \label{HomEquivariant} \varphi^{(m)}(z) = \sum_{\begin{subarray}{c} \mu \in J_n \\ w(\mu) = m  \end{subarray}} \frac{z_\mu}{j(\mu)!} (\nabla_\mu^{\omega} \varphi)(p). \end{equation}

Alternatively, we can use \eqref{HomLeft}. Define a connection $\bnabla$ by the rule $\bnabla Z_j = 0$. Define $\bnabla^{\omega, j}$ analogously to the definition of $\nabla^{\omega,j}$ and furthermore
$$\bnabla^{\omega}_\mu = \bnabla^{\omega,j}_{Z_{\mu_1}, \dots, Z_{\mu_j}},$$
giving us
$$\varphi^{(m)}(z) = \sum_{\begin{subarray}{c} \mu \in J_n \\ w(\mu) = m \end{subarray}} \frac{z_\mu}{j(\mu)!} (\bnabla_\mu^{\omega} \varphi)(p)$$
as well.

We extend the definition of $\Tay_p^\omega(\alpha; k) = \sum_{m=q}^k \alpha^{(m)}$ to a general $q$-form $\alpha$ with values in $\Ad(P)$ as in Section \ref{sec:WeightC}. Remark that
$$\Tay_{p\cdot a}^\omega(\alpha; k) = \Ad(a^{-1}) \Tay_p^\omega(\alpha;k),$$
and, in particular, the number $0 \leq m_\alpha \leq \infty$ defined by
\begin{equation} \label{mAlpha} m_\alpha = \sup \left\{ m \in \mathbb{Z} \, : \, \alpha^{(m)}(0) = 0\right\},\end{equation}
is independent of $p \in P_0$.

\subsection{Gauge-free Taylor expansion}
The curvature for $\Omega$ is an $\Ad(P)$-valued valued $2$-form on $M$. We will define an expansion of holonomy using the Taylor expansion of $\Omega$ that does not depend on gauge.
\begin{theorem} \label{th:Taylor}
Let $\pi: P \to N$ be a principal bundle with connection $\omega$. Let $p \in P_0$ be an arbitrary element. Define
$$F^k_p(\gamma) = \int_{\disk(\gamma)} \Tay_p^\omega(\Omega; k).$$
Then for any $k< 2m_\Omega$,
$$\Hol_p^\omega = \exp \left( - F_p^k + O(\ell^{k+1}) \right).$$
\end{theorem}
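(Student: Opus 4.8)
The plan is to reduce the gauge-free statement to the radial-gauge estimate of Theorem~\ref{th:Flat}. The bridge is the observation that the gauge-free Taylor polynomial $\Tay_p^\omega(\Omega; k)$ is nothing but the weighted Taylor polynomial at $0$ of the pullback $\sigma^* \Omega$, where $\sigma$ is the radial gauge centered at $0$ with $\sigma(0) = p$ furnished by Lemma~\ref{lemma:RadialG}~(b). Once this identification is in place, Theorem~\ref{th:Flat} already controls $\Hol_p^\omega$ in terms of $\int_{\disk(\gamma)} \sigma^* \Omega$, and it only remains to estimate the difference between $\sigma^* \Omega$ and its truncated Taylor polynomial using Lemma~\ref{lemma:PhiAlpha}.

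First I would establish the identification. By construction $\sigma(x) = \zeta^{-1}(x, p)$, and property~(ii) of Lemma~\ref{lemma:RadialG}~(a) gives $\delta_s^\omega(\zeta^{-1}(x,p)) = \zeta^{-1}(\delta_s(x), p)$, hence
\begin{equation*}
\delta_s^\omega \circ \sigma = \sigma \circ \delta_s .
\end{equation*}
Consequently, for any section $\varphi$ of $\Ad(P)$ viewed as an equivariant function, one has $\varphi(\delta_s^\omega(\zeta^{-1}(x, p))) = (\sigma^* \varphi)(\delta_s(x)) = (\delta_s^*(\sigma^* \varphi))(x)$. After comparison with the normalisation \eqref{mComponent}, this shows that the component $\varphi^{(m)}$ of the intrinsic expansion \eqref{HomEquivariant} coincides with the weight-$m$ homogeneous component of the ordinary function $\sigma^* \varphi$ on $N$. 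Since the passage to $q$-forms is carried out coefficientwise against the homogeneous forms $dz_i$ in exactly the same way in both settings, it follows that
\begin{equation*}
\Tay_p^\omega(\Omega; k) = \Tay_0(\sigma^* \Omega; k).
\end{equation*}
In particular $m_\Omega$, defined in \eqref{mAlpha}, is precisely the order such that $\sigma^* \Omega$ is of weight $\geq m_\Omega$, so the hypothesis of Theorem~\ref{th:Flat} is met with this value.

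With the identification at hand I would assemble the estimate. Theorem~\ref{th:Flat} yields $\Hol_p^\omega = \exp(-\int_{\disk(\gamma)} \sigma^* \Omega + O(\ell^{2m_\Omega}))$. Writing $\sigma^* \Omega = \Tay_0(\sigma^* \Omega; k) + R_k$, the remainder $R_k$ collects only homogeneous components of weight $> k$, so it is of weight $\geq k+1$; Lemma~\ref{lemma:PhiAlpha} then gives $\int_{\disk(\gamma)} R_k = O(\ell^{k+1})$. Since $\int_{\disk(\gamma)} \Tay_0(\sigma^* \Omega; k) = \int_{\disk(\gamma)} \Tay_p^\omega(\Omega; k) = F_p^k(\gamma)$, substituting back produces
\begin{equation*}
\Hol_p^\omega = \exp\left( -F_p^k + O(\ell^{k+1}) + O(\ell^{2m_\Omega}) \right).
\end{equation*}
The hypothesis $k < 2m_\Omega$ means $k+1 \leq 2m_\Omega$, so for short loops $O(\ell^{2m_\Omega})$ is absorbed into $O(\ell^{k+1})$, giving the claimed expansion.

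The only genuinely delicate step is the identification in the second paragraph: one must unwind the intrinsic definition of $\varphi^{(m)}$ together with its extension to $\Ad(P)$-valued forms, and verify that translating the bundle dilation $\delta_s^\omega$ through the radial gauge turns it into the ordinary base dilation $\delta_s$, so that the two notions of weighted Taylor expansion agree exactly (and so that the $\Ad$-equivariance making $F_p^k$ well defined is respected). The remaining points — that a smooth form minus its weight-$\leq k$ Taylor polynomial is of weight $\geq k+1$, and the bookkeeping with the $O$-symbol — are routine given the machinery of Section~\ref{sec:WeightC}.
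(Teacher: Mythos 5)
Your proof is correct and takes essentially the same route as the paper: apply Theorem~\ref{th:Flat} in the radial gauge, split off the truncated weighted Taylor polynomial, and control the remainder (of weight $\geq k+1$) via Lemma~\ref{lemma:PhiAlpha}, absorbing $O(\ell^{2m_\Omega})$ into $O(\ell^{k+1})$ since $k < 2m_\Omega$. The only difference is that you explicitly justify the identification $\Tay_p^\omega(\Omega; k) = \Tay_0(\sigma^*\Omega; k)$ through the relation $\delta_s^\omega \circ \sigma = \sigma \circ \delta_s$, a step the paper uses implicitly when writing $\Phi^{\sigma^*\Omega} = F_p^k + \Phi^\alpha$.
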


\begin{remark}
$F_p^k(\gamma)$ is a linear combination of terms involving iterated covariant derivatives of $\Omega$ at $0$, whose coefficients are moments (integrals of polynomial 1-forms along $\gamma$). It is gauge free in the sense that $\nabla^\omega_\mu \Omega(p)$ does not depend on a choice of local gauge. This will be more apparent in the computation of examples in section \ref{sec:Examples}.
\end{remark}

\subsubsection{Proof of Theorem~\ref{th:Taylor}} \label{sec:TaylorProof}
By Theorem~\ref{th:Flat}
$$\Hol_p^\omega = \exp \left( - \Phi^{\sigma^* \Omega} + O(\ell^{2 m_0}) \right).$$
We write $\Phi^{\sigma^* \Omega} = F^k_p + \Phi^\alpha$ such that
$$\alpha = \sigma^* \Omega - \Tay_p( \sigma^* \Omega; k).$$
Since $\alpha$ is of order $\geq k+1$, the result follows from Lemma~\ref{lemma:PhiAlpha}.

\section{Privileged coordinates and general Riemannian and sub-Riemannian spaces} \label{sec:Curved}
\subsection{Privileged coordinates}
Let us now consider a sub-Riemannian manifold $(M,D,g)$ with $D$ being bracket-generating, but with no further assumptions.
Such manifolds do not have dilations in general. However, we can locally find coordinate systems that play a role similar to the exponential coordinates of Section~\ref{sec:Carnot}.

Let $x_0$ be a regular point of $M$. Let $U$ be a neighborhood of $x_0$ where the growth vector $\underline{n}(\blank)$ is constant. All points in this neighborhood will be regular and of the same step $\lambda$. Hence, by replacing $M$ with $U$, we may assume that the horizontal bundle $D$ is equiregular with constant growth vector $\underline{n} = (n_1, n_2, \dots )$. Relative to this growth vector, for any $1 \leq i \leq n$, define \emph{the $i$-th weight} $w_i$ as the integer uniquely determined by
$$n_{w_i-1} < i \leq n_{w_i},$$
with the convention that $n_0 = 0$.

\begin{definition}
Let $D$ be a bracket-generating, equiregular subbundle with corresponding flag $D \subseteq D^2 \subseteq \cdots \subseteq D^\lambda = TM$ and growth vector $\underline{n} =(n_1, n_2, \dots, n_\lambda)$. Let $z: U \subseteq M \to \mathbb{R}^n$ be a local coordinate system such that $z(U) = \mathbb{R}^n$ and $z(x_0) =0$ for some $x_0 \in U$.
\begin{enumerate}[\rm (a)]
\item The coordinate system $(z,U)$ is called linearly adapted at $x_0$ if $dz_i( D^j_{x_0}) =0$ whenever $n_j < i$.
\item A function $f$ defined in some neighborhood of $x_0$ is said to have order~$\geq m$ at~$x_0$ if for any $j =0, 1,\dots, m-1$ and any collection of vector fields $X_1, \dots, X_j \in \Gamma(D)$, we have
$$(X_1 \cdots X_j f)(x_0) = 0.$$
Furthermore, $f$ is said to have order $m$ if it is of order $\geq m$ but not of order $\geq m+1$.
\item The coordinate system $(z,U)$ is called privileged if $z_i$ has order $w_i$ at $x_0$. Any such coordinate system will be linearly adapted at $x_0$.
\end{enumerate}
\end{definition}
For any sub-Riemannian manifold $(M, D, g)$ and any $x_0\in M$, there exist privileged coordinate systems centered at $x_0$. For a construction, see \cite[Section 4.3]{Bel96}. Note that the definition of a privileged coordinate system at $x_0$ for a sub-Riemannian manifold $(M,D,g)$ only depends on $D$, but not on $g$. The following characterization of the order of functions is found in \cite[Proposition~4.10]{Bel96}.
\begin{proposition}
Let $f$ be a function defined around $x_0$. Let $\sfd$ be the Carnot-Carath\'eodory metric of $(M,D,g)$. The following are equivalent.
\begin{enumerate}[\rm (a)]
\item $f$ has order $\geq m$ at $x_0$.
\item  $f(x) = O( \sfd(x_0,x)^m)$ for $x \to x_0$.
\end{enumerate}
\end{proposition}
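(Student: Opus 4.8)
The plan is to translate both conditions into statements about the behaviour of $f$ along concatenations of flows of horizontal vector fields. Fix horizontal vector fields $X_1, \dots, X_k \in \Gamma(D)$ and, for $t = (t_1, \dots, t_k)$ near $0 \in \mathbb{R}^k$, set
$$g(t) = f\big( \phi^{X_k}_{t_k} \circ \cdots \circ \phi^{X_1}_{t_1}(x_0) \big),$$
where $\phi^X_s$ denotes the time-$s$ flow of $X$. The first step is the elementary but crucial identity, obtained by repeatedly differentiating a composition of flows and applying the chain rule,
$$\left. \partial_{t_1}^{a_1} \cdots \partial_{t_k}^{a_k} g \right|_{t=0} = \big( X_1^{a_1} \cdots X_k^{a_k} f\big)(x_0),$$
valid for every multi-index $(a_1, \dots, a_k)$. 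Consequently, the vanishing of all Taylor coefficients of $g$ at $0$ of total degree $\leq m-1$ is equivalent, as the $X_i$ range over $\Gamma(D)$, to the vanishing of $(X_{1} \cdots X_{j} f)(x_0)$ for all $j \leq m-1$; by definition this is precisely the statement that $f$ has order $\geq m$.

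For the implication (b) $\Rightarrow$ (a) I would argue as follows. Each flow segment $s \mapsto \phi^{X_i}_s$ is horizontal, so the concatenation joining $x_0$ to $\phi^{X_k}_{t_k} \circ \cdots \circ \phi^{X_1}_{t_1}(x_0)$ is a horizontal curve of length at most $\sum_i |t_i| \sup |X_i|_g \leq C \max_i |t_i|$ on a fixed neighborhood. Hence $\sfd(x_0, \blank) \leq C|t|$ along this family, and assumption (b) gives $g(t) = O(|t|^m)$. Since $g$ is smooth, being $O(|t|^m)$ forces all its Taylor coefficients of total degree $\leq m-1$ to vanish, and by the identity above $(X_1^{a_1}\cdots X_k^{a_k} f)(x_0) = 0$ whenever $a_1 + \cdots + a_k \leq m-1$. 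As $X_1, \dots, X_k \in \Gamma(D)$ were arbitrary, taking $a_i \in \{0,1\}$ yields order $\geq m$. This direction uses only the trivial distance upper bound and ordinary Taylor expansion.

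The implication (a) $\Rightarrow$ (b) is the harder direction and is where the genuine sub-Riemannian input enters. Here I would invoke a quantitative reachability statement, namely the ball-box theorem (equivalently the quantitative form of the Chow--Rashevskii theorem): there is a fixed finite family of horizontal vector fields and a structural bound $k$ such that every point $x$ in a neighborhood of $x_0$ can be written as $\phi^{X_k}_{t_k} \circ \cdots \circ \phi^{X_1}_{t_1}(x_0)$ with $|t_i| \leq C\, \sfd(x_0, x)$. Granting this, assumption (a) kills all Taylor coefficients of the corresponding $g$ of degree $\leq m-1$, so Taylor's theorem with remainder — using that the degree-$m$ derivatives of $g$ are locally bounded because $f$ is smooth — gives $g(t) = O(|t|^m)$, whence $f(x) = g(t) = O(\sfd(x_0,x)^m)$. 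The main obstacle is exactly this reachability lemma: producing the flow representation with times controlled \emph{linearly} by $\sfd$ requires commutator-type compositions of flows to move along the higher-weight bracket directions, and is the one place where the bracket-generating, equiregular structure and the privileged-coordinate machinery of Section~\ref{sec:Curved} are essential rather than soft calculus.
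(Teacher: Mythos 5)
First, a point of reference: the paper does not prove this proposition at all --- it quotes it from \cite[Proposition~4.10]{Bel96}. So your attempt can only be measured against the standard proof in that source, not against anything internal to the paper. Your direction (b) $\Rightarrow$ (a) is correct and is essentially the standard argument: the identity $\partial_{t_1}^{a_1}\cdots\partial_{t_k}^{a_k} g|_{t=0} = (X_1^{a_1}\cdots X_k^{a_k} f)(x_0)$ holds at $t=0$, the concatenated flow curve gives the trivial bound $\sfd(x_0,\blank)\le C|t|$, and a smooth function that is $O(|t|^m)$ has no Taylor coefficients of degree $\le m-1$. No complaints there.

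The gap is in (a) $\Rightarrow$ (b). The lemma you invoke --- every $x$ near $x_0$ can be written \emph{exactly} as $\phi^{X_N}_{t_N}\circ\cdots\circ\phi^{X_1}_{t_1}(x_0)$ with a fixed finite family of horizontal fields, a fixed $N$, and times $|t_i|\le C\,\sfd(x_0,x)$ --- is not the ball-box theorem. Ball-box compares Carnot-Carath\'eodory balls with coordinate boxes $\{|z_i|\le Cr^{w_i}\}$; it does not, as a statement, produce an exact representation of nearby points by a bounded number of horizontal flows with linearly controlled times. That reachability statement is true, but it has to be extracted from the constructive machinery \emph{inside} the proofs of Chow--Rashevskii/ball-box (composition-of-flows maps, inverse-function or degree arguments, plus a correction scheme, since the commutator formula $\phi^{-Y}_t\phi^{-X}_t\phi^{Y}_t\phi^{X}_t \approx \phi^{[X,Y]}_{t^2}$ is only approximate for general vector fields). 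So as written, this step rests on an unproved result that is at least as deep as the proposition itself.

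Moreover, the heavy machinery is avoidable, and this is where your proposal diverges from the cited proof: the direction (a) $\Rightarrow$ (b) has a soft inductive argument. If $f$ has order $\ge m$ at $x_0$, then for a local orthonormal frame $X_1,\dots,X_{n_1}$ of $D$ each $X_if$ has order $\ge m-1$, hence by induction $|X_if|\le C\,\sfd(x_0,\blank)^{m-1}$ near $x_0$. Now take a horizontal curve $\gamma:[0,1]\to M$ from $x_0$ to $x$ of length $L\le 2\,\sfd(x_0,x)$ (the Carnot-Carath\'eodory distance is finite by Chow, and such curves exist by its definition as an infimum), parametrized with constant speed, and write $f(x)=\int_0^1\sum_i u_i(t)\,(X_if)(\gamma(t))\,dt$ with $\gamma'=\sum_i u_iX_i(\gamma)$. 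Since $\sfd(x_0,\gamma(t))\le Lt$, the integrand is bounded by $C'L(Lt)^{m-1}$, giving $|f(x)|\le C''\sfd(x_0,x)^m$. This uses only the definition of $\sfd$ and ordinary calculus along absolutely continuous curves --- no quantitative reachability, no privileged coordinates. In other words, you have located the genuine sub-Riemannian difficulty in the wrong place: both directions are soft, and the lemma you flag as essential can be dispensed with entirely.
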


Let $(z,U)$ be a privileged coordinate system and define $\delta_s:U \to U$ such that
$$\delta_s:(z_1, z_2, \dots, z_i, \dots, z_n) \mapsto (s^{w_1} z_1 , s^{w_2} z_2, \dots , s^{w_i} z_i, \dots, s^{w_n} z_n).$$
Relative to this privileged coordinate system, define functions, forms and vector fields of weight $m$ or $\geq m$ as in Section~\ref{sec:WeightC}. We warn the reader not to confuse notions of \emph{weight} and \emph{order}. In particular, for an arbitrary function $f,$ being of order $m$ at $x_0$ is an intrinsic property of $f$ and $D$ while being homogenous of weight $m$ is something that depends on the chosen privileged coordinate system. These notions are however related in the following way, see \cite[Section~5.1]{Bel96}.
\begin{lemma} \label{lemma:Orderweight}
A function $f$ is of order $\geq m$ at $x_0$ if and only if it has weight $\geq m$ relative to any privileged coordinate system centered at $x_0$. In particular, $f$ has order $m$ at $x_0$ if and only if it has weight $\geq m$, but not weight $\geq m+1$.
\end{lemma}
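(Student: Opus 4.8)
The plan is to reformulate ``weight $\geq m$'' in terms of the weighted Taylor expansion of Section~\ref{sec:WeightC} and then to compare it with the order condition through a single structural fact: in privileged coordinates, every horizontal vector field has weight $\geq -1$. Recall first that a smooth $f$ decomposes as $f \sim \sum_{k\geq 0} f^{(k)}$ with $f^{(k)}$ homogeneous of weight $k$ (given by \eqref{mComponent}), and that, directly from the definition, $f$ has weight $\geq m$ if and only if $f^{(k)} = 0$ for all $k < m$. Thus both sides of the lemma concern the vanishing of the low components $f^{(k)}$, $k<m$, and the final ``in particular'' clause is immediate once the main equivalence is established. The one input I will use is that for privileged coordinates any $X \in \Gamma(D)$ has weight $\geq -1$ (writing $X = \sum_i a_i \partial_{z_i}$, each $a_i$ has weight $\geq w_i-1$); this is precisely the property that fails for a general coordinate system and is where being privileged enters, see \cite[Section~4.3]{Bel96}. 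Consequently a product $X_1 \cdots X_j$ of horizontal fields has weight $\geq -j$, i.e.\ it maps functions of weight $\geq k$ to functions of weight $\geq k-j$.

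The easy direction follows at once: if $f$ has weight $\geq m$ and $j \leq m-1$ with $X_1,\dots,X_j \in \Gamma(D)$, then $X_1 \cdots X_j f$ has weight $\geq m-j \geq 1$, so its weight-$0$ component vanishes; since the weight-$0$ component of any function is its value at $x_0$, we obtain $(X_1 \cdots X_j f)(x_0)=0$, i.e.\ $f$ has order $\geq m$. For the converse I argue by contraposition. Suppose $f$ does not have weight $\geq m$ and let $m' < m$ be minimal with $f^{(m')} \neq 0$. Writing each horizontal field as $X_i = \hat X_i + R_i$, where $\hat X_i := X_i^{(-1)}$ is homogeneous of weight $-1$ and $R_i$ has weight $\geq 0$, a bookkeeping of weights shows that any operator obtained by replacing some $\hat X_i$ with $R_i$, as well as $\hat X_1 \cdots \hat X_{m'}$ applied to the components $f^{(k)}$ with $k>m'$, produces a function of strictly positive weight, hence vanishing at $x_0$. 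Therefore
$$(X_1 \cdots X_{m'} f)(x_0) = (\hat X_1 \cdots \hat X_{m'} f^{(m')})(x_0),$$
and it remains to choose horizontal fields making the right-hand side nonzero.

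Here the nilpotent approximation enters. The principal parts $\hat X_1, \dots, \hat X_{n_1}$ are homogeneous of weight $-1$ and, together with their iterated brackets, span the Lie algebra of the nilpotent approximation of $(M,D)$ at $x_0$, a Carnot group on which $f^{(m')}$ is a homogeneous polynomial of weight $m'$ and the $\hat X_i$ are the left-invariant horizontal generators. Applying \eqref{HomLeft} to $f^{(m')}$ on this Carnot group, the fact that $f^{(m')} \neq 0$ forces $(\hat Z_\mu f^{(m')})(x_0) \neq 0$ for some $\mu$ with $w(\mu)=m'$, where $\hat Z_\mu$ is built from the full graded frame. Using the stratification, each generator $\hat Z_i$ of weight $w_i > 1$ is an iterated bracket of the $\hat X_1, \dots, \hat X_{n_1}$ and hence, as a differential operator, a sum of products of exactly $w_i$ of them; substituting, $\hat Z_\mu$ becomes a sum of products of exactly $m'$ horizontal generators, so some $\hat X_{k_1} \cdots \hat X_{k_{m'}}$ satisfies $(\hat X_{k_1} \cdots \hat X_{k_{m'}} f^{(m')})(x_0) \neq 0$. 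Lifting the $\hat X_{k_i}$ back to the horizontal frame fields $X_{k_i} \in \Gamma(D)$ and invoking the displayed identity gives $(X_{k_1} \cdots X_{k_{m'}} f)(x_0) \neq 0$ with $m'<m$, so $f$ does not have order $\geq m$.

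The main obstacle is this last step: transferring detection of the lowest component $f^{(m')}$ from the full graded frame (which contains the high-weight, non-horizontal generators arising as brackets) to products of purely horizontal fields. This is exactly where bracket-generation is used, guaranteeing that the nilpotent approximation is a genuine Carnot group whose higher layers are brackets of the first; the computation then reduces to the Carnot-group identity \eqref{HomLeft} together with the elementary observation that an iterated bracket of weight-$(-1)$ fields is, as a differential operator, a sum of products of the corresponding number of horizontal fields.
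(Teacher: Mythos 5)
Your overall architecture is sound as bookkeeping: given that (i) every horizontal field has weight $\geq -1$ in privileged coordinates and (ii) the principal parts $\hat X_i$ generate the Carnot group of the nilpotent approximation with $(z_i)$ as exponential coordinates, the identity $(X_{k_1}\cdots X_{k_{m'}}f)(x_0) = (\hat X_{k_1}\cdots \hat X_{k_{m'}}f^{(m')})(x_0)$ plus detection of the lowest nonzero component via \eqref{HomLeft} and the bracket-expansion trick is exactly how this computation goes once the nilpotent approximation is in place. For comparison, the paper offers no proof at all: it cites \cite[Section~5.1]{Bel96} for this lemma, and likewise cites \cite{Bel96} for the Carnot-group structure of the tangent cone.

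The genuine gap is that your ``one input'' is not an elementary consequence of the definition of privileged coordinates; it is an instance of the hard direction of the very lemma you are proving. Writing $X = \sum_i a_i \partial_{z_i}$ with $a_i = X z_i$, what the definition gives you is that $a_i$ has \emph{order} $\geq w_i - 1$ at $x_0$ (horizontal derivatives of $z_i$ of length $< w_i$ vanish there); upgrading this to \emph{weight} $\geq w_i - 1$ is precisely the implication ``order $\Rightarrow$ weight'' applied to the coefficient functions. This is also how the literature is organized: in \cite{Bel96} and \cite{Jea14} the weighted expansion $X_i = X_i^{(-1)} + X_i^{(0)} + \cdots$ is obtained from (or proved together with) the function-level order/weight equivalence, and it lives in the nilpotentization sections, not in \cite[Section~4.3]{Bel96}, which constructs privileged coordinates. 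Your second input has the same problem: the statement that the nilpotent approximation is a Carnot group already presupposes that the limits $\lim_{s \downarrow 0} s\, \delta_s^* X_j$ exist, which is input (i). So, as written, the argument is circular, or at best reduces the lemma to an equivalent assertion whose proof is outsourced to a misdirected citation. What is missing is the actual hard content: a direct proof that order $\geq m$ forces the weighted Taylor components below $m$ to vanish --- e.g.\ an induction on the weight in which linear adaptedness is used to show that values of $X_{i_1}\cdots X_{i_j} f$ and of brackets at $x_0$ span the relevant Euclidean derivatives, while the defining property $\mathrm{ord}(z_k) = w_k$ kills the coefficient-derivative terms $(X_{i_1}\cdots X_{i_r} z_k)(x_0)$, $r < w_k$, that would otherwise contaminate the system. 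A smaller remark: your ``easy direction'' does not need input (i) at all --- since the order of a product is at least the sum of the orders, each monomial $z^\alpha$ has order $\geq w(\alpha)$ and the high-order Euclidean remainder is harmless --- so that half can be made self-contained and only the converse carries the real difficulty.
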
 
Lemma~\ref{lemma:Orderweight} also gives us the following corollary.
\begin{corollary}
Relative to a privileged coordinate system $(z,U)$ centered at $x_0$, if a function, differential form or vector field has weight $\geq m$ (resp. not weight $\geq m$), then the same holds true relative to any other privileged coordinate system.
\end{corollary}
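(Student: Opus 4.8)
The plan is to reduce all three cases to the statement for functions, where the result is exactly Lemma~\ref{lemma:Orderweight}: a function has weight $\geq m$ in one (equivalently, any) privileged coordinate system precisely when it has order $\geq m$ at $x_0$, and the latter notion is intrinsic, depending only on $D$ and not on coordinates. For differential forms and vector fields there is no such intrinsic substitute ready to hand, so the key idea is to manufacture one by pairing the object against test objects and reading off the order of the resulting scalar functions.

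First I would record the basic compatibility of weights with contraction. Working in a fixed privileged system, a direct computation from the equivariance $(\delta_s^*\alpha)(\delta_s^* X) = \delta_s^*(\alpha(X))$ shows that if $\alpha$ is homogeneous of weight $a$ and $X$ is homogeneous of weight $b$, then the function $\alpha(X)$ is homogeneous of weight $a+b$; the same argument gives $Xf$ homogeneous of weight $a+b$ when $f$ is homogeneous of weight $b$. Passing to the weighted Taylor components of Section~\ref{sec:WeightC} upgrades this to the filtered statement: contracting an object of weight $\geq a$ with one of weight $\geq b$ yields a function of weight $\geq a+b$. This bookkeeping with the ``$\geq$'' versions is the main, if modest, technical obstacle; everything after it is formal.

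Next I would treat vector fields. I claim that $X$ has weight $\geq m$ if and only if $Xf$ has order $\geq k+m$ for every function $f$ of order $\geq k$ and every $k$. The forward implication is the contraction estimate together with Lemma~\ref{lemma:Orderweight}. For the converse, note that in a privileged system the coordinate function $z_i$ has order $w_i$ while the coordinate field $\partial_i$ is homogeneous of weight $-w_i$; since $Xz_i$ is exactly the $i$-th component of $X$, the hypothesis forces this component to have weight $\geq w_i+m$, whence $X=\sum_i (Xz_i)\,\partial_i$ has weight $\geq m$. The characterizing condition refers only to orders of functions and to the intrinsic action $f\mapsto Xf$, so it is independent of the chosen coordinates, which settles the vector field case.

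Finally I would handle forms dually, now using that the weight $\geq \ell$ of a vector field has already been shown to be intrinsic. A $q$-form $\alpha$ has weight $\geq m$ if and only if $\alpha(X_1,\dots,X_q)$ has order $\geq m+\ell_1+\cdots+\ell_q$ whenever each $X_j$ has weight $\geq \ell_j$: the forward direction is again the contraction estimate, and the converse follows by contracting against $\partial_{i_1},\dots,\partial_{i_q}$ to recover each coefficient of $\alpha$ in the given system and bound its weight. Since this criterion refers only to intrinsically defined data, the weight of $\alpha$ is the same in every privileged system. As weight $\geq m$ is thereby intrinsic for each integer $m$, the complementary assertion ``not of weight $\geq m$'' is intrinsic as well, completing the proof in all three cases.
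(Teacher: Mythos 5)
Your proof is correct, but it runs the reduction in the opposite order from the paper's. The paper goes functions $\to$ forms $\to$ vector fields: after invoking Lemma~\ref{lemma:Orderweight} for functions, it notes that each coordinate $z_i$ has weight $\geq w_i$ but not $\geq w_i+1$ relative to every privileged system, transfers this to $dz_i$, decomposes an arbitrary form as a sum of terms $f\, dz_{i_1}\wedge\cdots\wedge dz_{i_q}$, and only then characterizes a vector field $X$ of weight $\geq m$ by the condition that $\alpha(X)$ has weight $\geq m+m_2$ for every one-form $\alpha$ of weight $\geq m_2$. You instead go functions $\to$ vector fields $\to$ forms: vector fields are tested against functions (with the converse recovered from $X=\sum_i(Xz_i)\,\partial_i$, using that $z_i$ has order $w_i$ and $\partial_i$ is homogeneous of weight $-w_i$), and forms are then tested against vector fields (converse via contraction with the $\partial_{i_j}$'s and the basis $dz_{i_1}\wedge\cdots\wedge dz_{i_q}$). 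The two routes are mirror images and rely on the same ingredients: Lemma~\ref{lemma:Orderweight}, additivity of weight under pairing (justified through the weighted Taylor components of Section~\ref{sec:WeightC}), and coordinate decomposition. If anything, your version makes explicit a point the paper leaves terse: the step ``the same must then hold true for the form $dz_i$'' in the paper silently uses that exterior differentiation respects the weight filtration of the \emph{other} coordinate system's dilations, whereas your criteria quantify only over data already known to be intrinsic (orders of functions, the action $f\mapsto Xf$, and weights of vector fields once established), so each transfer between coordinate systems is fully spelled out. The price is the slightly heavier bookkeeping you acknowledge for the filtered ``$\geq$'' contraction estimates, which the paper also needs but does not display.
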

\begin{proof}
The statement is true for functions by Lemma~\ref{lemma:Orderweight}. In particular, the function $z_i$ must have weight $\geq w_i$ but not weight $\geq w_i +1$ with respect to any privileged coordinate system. The same must then hold true for the form $dz_i$. This gives us the desired result for forms since, near $x_0$, all such forms can be written as a sum of elements $f dz_{i_1} \wedge \cdots \wedge dz_{i_q}$ for some function $f$. Finally, any vector field $X$ is of weight $\geq m$ if and only if for any one-form $\alpha$ of weight $ \geq m_2$ we have that $\alpha(X)$ is of weight $\geq m + m_2$, which shows independence of privileged coordinate system for vector fields as well.
\end{proof}
Relative to a privileged coordinate system, we can define a radial gauge as in Lemma~\ref{lemma:RadialG}.
\begin{theorem}  \label{th:Curved}
Let $\pi:P \to M$ be a principal bundle over a sub-Riemannian manifold $(M, D, g)$, equipped with a connection $\omega$. Let $x_0 \in M$ be a regular point of $(M,D, g)$. Let $(z,U)$ be any privileged coordinate system around $x_0$. For any $p \in P_{x_0}$, let $\sigma: U \to P|_U$ be the corresponding radial gauge satisfying $\sigma(x_0) = p$. Define
$$F_p(\gamma) = \int_{\disk(\gamma)} \sigma^* \Omega.$$
Assume that $\sigma^* \Omega$ has weight $\geq m_\Omega$ at $x_0$. Then
$$\Hol^\omega_p = \exp\left( - F_p + O(\ell^{2m_\Omega}) \right).$$
\end{theorem}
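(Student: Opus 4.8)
The plan is to reduce Theorem~\ref{th:Curved} to the already-proven Theorem~\ref{th:Flat} by transferring the relevant structure from the Carnot-group setting to the general equiregular sub-Riemannian manifold via the chosen privileged coordinate system. The key point is that once a privileged coordinate system $(z,U)$ is fixed, the coordinate dilations $\delta_s$ defined on $U$ give $U$ (or a dilation-invariant neighborhood $\tilde U$ of $x_0$) the structure of a space with dilations fixing $x_0$, and the radial vector field $S = \sum_i w_i z_i \partial_i$ together with its flow $\delta_s = e^{(\log s)S}$ plays exactly the same role as in Section~\ref{sec:Carnot}. In particular, Lemma~\ref{lemma:RadialG} was already invoked (``we can define a radial gauge as in Lemma~\ref{lemma:RadialG}'') to produce the radial gauge $\sigma$ satisfying $(\sigma^*\omega)(S) = 0$, and this is the only gauge-dependent ingredient we need.

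First I would examine the proof of Theorem~\ref{th:Flat} in Section~\ref{sec:FlatProof} and isolate which properties of the Carnot dilations were actually used. Inspecting that argument, the genuine inputs are: (i) the existence of dilations $\delta_s$ with the radial vector field $S$ and a radial gauge killing $\sigma^*\omega$ on $S$; (ii) the identity
\begin{equation*}
-A_s(t) = \int_0^s \frac{1}{r}(\sigma^*\Omega)(S(\gamma_r), \dot\gamma_r)\, dr,
\end{equation*}
which is a purely local differential-geometric computation valid whenever $(\sigma^*\omega)(S)=0$ and $\gamma_r = \delta_r\gamma$; and (iii) the weight estimate on $\iota_S\sigma^*\Omega$ furnished by Lemma~\ref{lemma:PhiAlpha} and the hypothesis $\sigma^*\Omega \geq m_\Omega$. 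None of these three ingredients uses the group structure of $N$ per se: (i) and (ii) are local statements about dilations and the radial gauge, and (iii) relies only on the weighting machinery of Section~\ref{sec:WeightC}, which was explicitly re-established for privileged coordinates in the paragraph preceding Lemma~\ref{lemma:Orderweight}. Thus the plan is to observe that all of Lemma~\ref{lemma:StoL}, Lemma~\ref{lemma:PhiAlpha}, and the estimate in Section~\ref{sec:FlatProof} go through verbatim with $N$ replaced by $(U,\delta_s)$, since each was phrased in terms of an abstract metric space with dilations.

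The one subtlety, and the step I expect to be the main obstacle, is that on a general sub-Riemannian manifold the coordinate dilations $\delta_s$ are \emph{not} homotheties of the Carnot--Carath\'eodory metric $\sfd$ — they only respect $\sfd$ asymptotically as $s\downarrow 0$. Consequently Lemma~\ref{lemma:StoL} as stated (which assumes exact homogeneity $\sfd(\delta_s x, \delta_s y) = s\,\sfd(x,y)$) does not literally apply. I would handle this by replacing the exact scaling relation with the two-sided comparison between $\sfd$ and the privileged pseudonorm that Lemma~\ref{lemma:Orderweight} encodes: a function of weight $\geq m$ satisfies $f = O(\sfd(x_0,\blank)^m)$, which is precisely what the estimates in Lemma~\ref{lemma:PhiAlpha} and Section~\ref{sec:FlatProof} actually use. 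More carefully, one reparametrizes $\gamma$ to constant speed, writes $\gamma_s = \delta_s\gamma$, and uses that $\ell(\gamma_s) \leq C s\,\ell(\gamma)$ for $s \leq 1$ (a standard consequence of the Ball--Box theorem for privileged coordinates, cf.~\cite{Bel96}) to control the length of the dilated loops. With this comparison in hand, the chain of inequalities $\|A_s\|_{L^\infty} \leq C_3 s^{m_\Omega}\ell(\gamma)^{m_\Omega}$ and the final bound
\begin{equation*}
\left\| \exp^{-1}\Hol_p^\omega(\delta_s\gamma) - F_p(\delta_s\gamma) \right\| \leq C s^{2m_\Omega}
\end{equation*}
follow exactly as before, and Lemma~\ref{lemma:StoL}, suitably adapted to the asymptotic scaling, yields $\Hol_p^\omega = \exp(-F_p + O(\ell^{2m_\Omega}))$. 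I would therefore structure the proof as: (1) set up $\delta_s$, $S$, and the radial gauge on $U$; (2) record the local identity for $A_s(t)$; (3) invoke the weight hypothesis together with Lemma~\ref{lemma:Orderweight} to obtain the $s^{m_\Omega}$ estimate; (4) apply the Picard-type Lemma~\ref{lemma:Picard} and conclude via the dilation-to-length reduction, flagging the asymptotic-versus-exact homothety issue as the only place requiring genuine care beyond the flat case.
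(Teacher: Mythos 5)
You have correctly isolated the crux — the coordinate dilations $\delta_s$ of a privileged chart are not exact homotheties of the Carnot--Carath\'eodory metric $\sfd$ — but your proposed repair does not work, and the failure is more basic than an ``asymptotic versus exact scaling'' issue. In a general privileged coordinate system the dilations $\delta_s$ do \emph{not} preserve the distribution $D$: only the nilpotentized distribution $\hat D$ spanned by the principal parts $Z_j = \lim_{s\downarrow 0} s\,\delta_s^* X_j$ is dilation-invariant (Section~\ref{sec:dilation}), and $D = \hat D$ only in special situations such as a Carnot group in exponential coordinates. Consequently, for a $D$-horizontal loop $\gamma$, the dilated loop $\delta_s\gamma$ is in general not $D$-horizontal at all, so its Carnot--Carath\'eodory length is $+\infty$; the inequality $\ell(\delta_s\gamma) \leq C s\,\ell(\gamma)$ that you invoke from the Ball--Box theorem is therefore false (or vacuous) as stated, and switching to the tangent-cone length does not help either, since then it is the original loop $\gamma$ that fails to be horizontal. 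This is fatal for your plan of running Lemma~\ref{lemma:StoL} directly on $U$: that lemma converts bounds on dilated families into bounds in terms of $\ell$ precisely by rescaling an arbitrary short loop up to unit size and back, and on $U$ there is no metric for which both the loop and its dilates have comparable finite lengths.

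The paper's proof (Section~\ref{sec:extended}) resolves exactly this point by a construction your proposal is missing: the extended space $\tilde U = \mathbb{R}_{>0}\times U$ with dilations $\tilde\delta_s(s_0,z) = (ss_0,\delta_s(z))$ and the rescaled frame $\tilde X_j(s,z) = s^{-1}(\tilde \delta_{1/s}^* X_j)(s,z)$. The resulting distribution $\tilde D$ has as horizontal curves precisely the scale-tagged curves $\tilde\gamma(t) = (s,\delta_s\gamma(t))$ with $\gamma$ being $D$-horizontal, with $\ell(\tilde\gamma) = s\,\ell(\gamma)$, so $\tilde\sfd$ is an (infinite-valued) metric for which the $\tilde\delta_s$ \emph{are} exact homotheties. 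This is why Lemma~\ref{lemma:StoL} was deliberately stated for metrics with possibly infinite distances and with base points ranging over the whole dilation orbit $\{\delta_s(x_0)\}_{s\geq 1}$ rather than a fixed point: in $\tilde U$ the base point $(1,0)$ is not fixed by $\tilde\delta_s$, and the hypothesis must be verified at the points $(s_0,0)$, $s_0 \geq 1$, as is done in Lemma~\ref{lemma:Curved}. The paper then pulls back the bundle and connection along $\nu\colon\tilde U\to U$, replaces Lemma~\ref{lemma:PhiAlpha} by Lemma~\ref{lemma:Curved} (whose estimates use the tangent-cone comparison $\tfrac12\hat\sfd \leq \sfd \leq \tfrac32\hat\sfd$ from Section~\ref{sec:dilation}, not merely Lemma~\ref{lemma:Orderweight}), and reruns the argument of Section~\ref{sec:FlatProof} there. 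Your steps (1)--(3) survive, but step (4) — the dilation-to-length reduction — genuinely requires this extended space (or an equivalent device tracking the scale as an extra variable), and no amount of two-sided metric comparison on $U$ alone substitutes for it.
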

We will give the proof in Section~\ref{sec:extended}.

\subsection{Tangent cone of a sub-Riemannian space at a regular point} \label{sec:dilation}
Let $x_0$ be a given regular point $(M, D, g)$. By choosing an appropriate neighborhood $U$ of $x_0$, we can make the following assumptions.
\begin{enumerate}[\rm (i)]
\item All points in $U$ are regular. Hence, $D|U$ is equiregular with growth vector $(n_1, \dots, n_\lambda)$.
\item $D|_U$ has a global orthonormal basis $X_1, \dots, X_{n_1}$.
\item There is a privileged coordinate system $z: U \to \mathbb{R}^n$ centered at $x_0$.
\end{enumerate}
For now on, we will replace $M$ with $U$ and consider the sub-Riemannian manifold $(U, D, g)$. We will use the privileged coordinates $(z_1, \dots, z_n)$ to identify $U$ with $\mathbb{R}^n$. In particular, we identify $x_0$ with $0$.

Define $Z_1, \dots, Z_{n_1}$ as the $-1$ homogeneous component of our orthonormal basis, meaning that
$$Z_j(z) := \lim_{s\downarrow 0} s (\delta_s^* X_j)(z), \qquad z \in U.$$
Let $\mathfrak{n} = \Lie \{ Z_1, \dots, Z_{n_1} \}$ be the Lie algebra generated by these vector fields. Since these vector fields are homogeneous of weight $-1$ and since there are no nonzero homogeneous vector fields of weight $< -\lambda$, the Lie algebra $\mathfrak{n}$ must be nilpotent of step at most $\lambda$ by \eqref{BracketDelta}. Actually, from \cite{Bel96}, we can conclude the following.
\begin{theorem} 
\begin{enumerate}[\rm (a)]
\item The vector fields $Z_1, \dots, Z_{n_1}$ are linearly independent at every point.
\item Define a sub-Riemannian structure on $(\hat D, \hat g)$ such that $Z_1, \dots, Z_{n_1}$ is an orthonormal basis. Then $\hat D$ and $D$ have the same growth vector. Furthermore, $(U, \hat D, \hat g)$ is a sub-Riemannian Carnot group of step $\lambda$, $\mathfrak{n}$ is its Lie algebra and $(z_1, \dots, z_n)$ are exponential coordinates with respect to some basis of $\mathfrak{n}$.
\item If $\hat \sfd$ is the Carnot-Carath\'eodory metric of $(\hat D, \hat g)$, then $(U, \hat \sfd)$ is the metric tangent cone of $(M, \mathsf{d})$ at $x_0$ in the sense of Gromov-Hausdorff convergence.
\item For any $\rho > 0$ there exists an $\eps > 0$ such that
$$(1- \rho) \hat \sfd(0, x) \leq \sfd(0, x) \leq (1+ \rho) \hat \sfd(0, x).$$
whenever $\sfd(0,x) < \eps$.
\end{enumerate}
\end{theorem}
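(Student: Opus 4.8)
The plan is to take the four parts in order, bootstrapping from a pointwise identity at the origin up to the global metric comparison. The Lie-algebraic statements (a)--(b) should follow fairly mechanically from homogeneity together with the regularity of $x_0$, while the genuine difficulty lies in the uniform metric convergence in (c)--(d).

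For (a) I would first pin down the value of $Z_j$ at the origin. Writing $X_j = \sum_i a_{ji} \partial_i$ in the privileged coordinates, the weight-$(-1)$ component $Z_j$ retains, at a constant, only the terms with $w_i = 1$; since privileged coordinates are linearly adapted, $X_j(0) \in D_0 = \spn\{\partial_1, \dots, \partial_{n_1}\}$, so in fact $Z_j(0) = X_j(0)$. As $X_1(0), \dots, X_{n_1}(0)$ is an orthonormal basis of $D_0$, the $Z_j$ are linearly independent at $0$, hence on a neighborhood. To spread this to every point I would use that $Z_j$ is homogeneous of weight $-1$, which by the definition of $\delta_s^* X$ gives $Z_j(\delta_s(z)) = s^{-1}(\delta_s)_* Z_j(z)$; since $(\delta_s)_*$ is a linear isomorphism, independence at $z$ is equivalent to independence at $\delta_s(z)$. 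Given any $z \neq 0$, the point $\delta_{1/s}(z)$ tends to $0$ as $s \to \infty$, so it lies in the neighborhood of independence for $s$ large, and independence at $z = \delta_s(\delta_{1/s}(z))$ follows.

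For (b) the core is that the graded algebra $\frakn = \Lie\{Z_1, \dots, Z_{n_1}\}$ reproduces the growth vector of $D$. I would decompose $\frakn = \bigoplus_{k=1}^\lambda \frakn_k$, where $\frakn_k$ is spanned by the weight-$(-k)$ iterated brackets; by \eqref{BracketDelta} a length-$k$ bracket of weight-$(-1)$ fields has weight $-k$, and as there are no nonzero homogeneous fields of weight $< -\lambda$, the algebra is nilpotent of step $\leq \lambda$ with $[\frakn_1, \frakn_k] \subseteq \frakn_{k+1}$ and $[\frakn_1, \frakn_\lambda] = 0$, yielding the stratification required in \eqref{WeightedBasis}. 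The input from regularity is that at a regular point the leading part of $[X_{i_1}, [\dots, X_{i_k}]]$ is exactly $[Z_{i_1}, [\dots, Z_{i_k}]]$, and that such brackets of length $\leq j$ span a space of dimension precisely $n_j$ at $0$; this is Bellaïche's nilpotent-approximation computation, and it forces $\dim \frakn = n$ and shows $\hat D$ and $D$ share the growth vector. The $n$-dimensional algebra of complete, polynomial vector fields then integrates to a free transitive action of the simply connected group $N$ on $\mathbb{R}^n \cong U$, identifying $(U, \hat D, \hat g)$ with the Carnot group and $z$ with exponential coordinates.

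For (c) and (d) I would pass to the rescaled structures. Writing $X_j = Z_j + R_j$ with $R_j$ of weight $\geq 0$, the dilated fields $s\,\delta_s^* X_j$ converge to $Z_j$ as $s \downarrow 0$, uniformly on compacta, so the control system defining $\sfd$, rescaled by $\delta_{1/s}$, converges to the one defining $\hat\sfd$. Feeding this into the Ball--Box estimates gives uniform convergence of the rescaled Carnot--Carath\'eodory distances to $\hat\sfd$ on compact sets, which is precisely Gromov--Hausdorff convergence of $(M, s^{-1}\sfd)$ to $(U, \hat\sfd)$ and, read off at a single point, the two-sided bound of (d). The main obstacle is exactly this step: pointwise convergence of the vector fields is immediate, but turning it into uniform convergence of the distances requires the Ball--Box theorem and uniform control of the horizontal-curve ODEs as $s \to 0$, ruling out length being shifted into the higher-weight perturbation $R_j$. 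This is the technically delicate content genuinely imported from \cite{Bel96}, whereas (a) and (b) are comparatively routine consequences of homogeneity and regularity.
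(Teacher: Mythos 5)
Your proposal is correct and takes essentially the same route as the paper: the paper in fact offers no proof of this theorem at all, stating it as a direct consequence of Bella\"iche's nilpotent-approximation theory \cite{Bel96}, which is precisely where you defer the substantive inputs (the dimension/growth-vector count behind (b) and the uniform Ball--Box convergence behind (c)--(d)). The pieces you supply yourself --- the computation $Z_j(0)=X_j(0)$ from linear adaptedness, the propagation of linear independence by weight $-1$ homogeneity, and the reduction of (d) to uniform convergence of the rescaled distances on compact sets --- are all correct.
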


\subsection{Dilation of extended space and proof of Theorem \ref{th:Curved}} \label{sec:extended}
We continue with the notation and assumptions of Section \ref{sec:dilation}.
Introduce a space $\tilde U  = \mathbb{R}_{> 0} \times U$ with coordinates $(s,z) = (s, z_1, \dots z_n)$. Define a projection $\nu: \tilde U \to U$ by $(s,z) \mapsto z$. Consider any vector field $X$ on $U$ with local flow $e^{tX}$ as a vector field on $\tilde U$ by
$$(Xf)(s,z) = \left. \frac{d}{dt} f(s, e^{tX}(z)) \right|_{t=0}.$$
Introduce a map $\tilde \delta_s: \tilde U \to \tilde U$ by
$$\tilde \delta_s(s_0, z_0) = (s s_0, \delta_s(z)).$$
Then $\nu \circ \tilde \delta_s = \delta_s \circ \nu$, and so $\tilde \delta_s^* \nu^* \alpha = \nu^* \delta_s^* \alpha$ for any form on $U$.

Relative to the global orthonormal basis $X_1, \dots, X_{n_1}$ of $D$, define vector fields $\tilde X_1, \dots, \tilde X_{n_1}$ by
$$\tilde X_j(s,z) := s^{-1}  (\tilde \delta_{1/s}^* X_j)(s,z).$$
Define a sub-Riemannian structure $(\tilde D, \tilde g)$ on $\tilde U$ such that the vector fields $\tilde X_1, \dots \tilde X_{n_1}$ form an orthonormal basis. The subbundle $\tilde D$ is then not bracket-generating, but we will still consider the Carnot-Carath\'eodory metric $\tilde \sfd$ of $(\tilde D, \tilde g)$. By definition, any $\tilde D$-horizontal curve is on the form $\tilde \gamma(t) = (s, \delta_{s}\gamma(t))$ where $\gamma(t)$ is a $D$-horizontal curve in $U$ and $s >0$.
Furthermore, if $\gamma(t)$ has finite length, then
$$ \ell(\tilde \gamma) := \int_0^1 \left| \dot {\tilde \gamma}(t) \right|_{\tilde g} \, dt = s \ell(\gamma).$$
As a consequence,
$$\tilde \sfd\big( (s_1, z_1), (s_2, z_2) \big) = \left\{ \begin{array}{ll} \infty & \text{if } s_1 \neq s_2, \\
s_1 \sfd(z_1, z_2) & \text{if } s_1 = s_2. \end{array}\right.$$
The manifold $U$ is isometric to the submanifold $\{ (1, z) \in \tilde U \, : \, z \in U\}$ and we will identify these two manifolds from now on.

\begin{lemma} \label{lemma:Curved}
Let $\alpha$ be a two-form on $U$ that is of weight $\geq m$ taking values in some finite dimensional vector space $\mathfrak{g}$. Consider the map
$$\Phi^\alpha: \scrL(0,U) \to \mathfrak{g}, \qquad \gamma \mapsto \int_{\disk(\gamma)} \alpha.$$
Then $\Phi^{\alpha}= O(\ell^m)$.
\end{lemma}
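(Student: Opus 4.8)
The strategy is to transport the problem to the extended space $\tilde U$, where the dilations $\tilde\delta_s$ are \emph{genuine} homotheties of $\tilde\sfd$ (unlike the $\delta_s$ on $U$), and there imitate the proof of Lemma~\ref{lemma:PhiAlpha}. First I would lift each $\gamma\in\scrL(0,U)$ to $\tilde\gamma\in\scrL((1,0),\tilde U)$ by $\tilde\gamma(t)=(1,\gamma(t))$. Since $U$ is isometric to the level $\{(1,z)\}$, we have $\ell(\tilde\gamma)=\ell(\gamma)$, so any bound in $\ell(\tilde\gamma)$ is automatically a bound in the original length. Using the global dilation, form the disk $(\sigma,t)\mapsto\tilde\delta_\sigma(\tilde\gamma(t))=(\sigma,\delta_\sigma\gamma(t))$; from $\nu\circ\tilde\delta_\sigma=\delta_\sigma\circ\nu$ this projects under $\nu$ onto $\disk(\gamma)$, so that $\Phi^\alpha(\gamma)=\int_{\disk(\gamma)}\alpha$ equals the integral of $\nu^*\alpha$ over this $\tilde U$-disk. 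Denoting the resulting functional on $\scrL(\tilde U)$ by $\tilde\Phi^{\nu^*\alpha}$, it suffices to prove $\tilde\Phi^{\nu^*\alpha}=O(\ell^m)$ at $(1,0)$. Moreover $\tilde\delta_s^*\nu^*\alpha=\nu^*\delta_s^*\alpha$ shows that $\nu^*\alpha$ has weight $\geq m$ on $\tilde U$.

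Next I would run the computation of Lemma~\ref{lemma:PhiAlpha} with $\delta_s,S,\alpha$ replaced by $\tilde\delta_s$, the radial field $\tilde S=u\,\partial_u+\sum_i w_iz_i\,\partial_i$ (where $u$ denotes the $\mathbb{R}_{>0}$-coordinate), and $\nu^*\alpha$. Because $\nu^*\alpha$ contains no $du$, one has $\iota_{\tilde S}\nu^*\alpha=\nu^*(\iota_S\alpha)=\sum_j(f_j\circ\nu)\,dz_j$, so its components are the pullbacks of the $f_j$ appearing in \eqref{BetaS} and are independent of $u$; forming $\beta_s$ as in \eqref{BetaS} yields $\tilde\Phi^{\nu^*\alpha}(\tilde\delta_s\tilde\gamma)=\int_0^1\beta_s(\dot{\tilde\gamma}(t))\,dt$. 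Since $\dot{\tilde\gamma}$ is $\tilde D$-horizontal and tangent to a level, only the components with $w_j=1$ contribute, exactly as before. As $\tilde\delta_s$ genuinely rescales $\tilde\sfd$, Lemma~\ref{lemma:StoL} applies with base point $(1,0)$; one checks its hypothesis along the orbit $\{(u_0,0):u_0\geq1\}$, where the relevant $U$-points $\delta_{u_0}\gamma(t)$ remain within $O(\ell(\tilde\gamma))$ of $0$ and so may be treated uniformly.

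The main obstacle is the interior estimate. On a Carnot group, Lemma~\ref{lemma:PhiAlpha} used the exact homogeneity $\sfd_0(\delta_{sr}z)=sr\,\sfd_0(z)$ of the Carnot--Carath\'eodory gauge; here $\delta_{sr}$ is \emph{not} homothetic for $\sfd$ on $U$, so this identity fails and the scaling must be recovered a different way. The fix is to combine the order estimate $f_j(x)=O(\sfd_0(x)^{m-1})$ (valid since $f_j$ has order $\geq m-1$, using the equivalence of order and distance decay) with the tangent-cone comparison $(1-\rho)\hat\sfd\leq\sfd\leq(1+\rho)\hat\sfd$ near $x_0$: the exact homogeneity $\hat\sfd_0(\delta_{sr}z)=sr\,\hat\sfd_0(z)$ of the \emph{limit} metric then gives $\sfd_0(\delta_{sr}\gamma(t))\leq C\,sr\,\sfd_0(\gamma(t))$ for short loops, reinstating the scaling used in \eqref{BetaS}. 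Together with $\sfd_0(\gamma(t))\leq\ell(\gamma)/2=\ell(\tilde\gamma)/2$ this produces $\|\tilde\Phi^{\nu^*\alpha}(\tilde\delta_s\tilde\gamma)\|\leq C(\eps s)^m$, so Lemma~\ref{lemma:StoL} yields $\tilde\Phi^{\nu^*\alpha}=O(\ell^m)$ and hence $\Phi^\alpha=O(\ell^m)$.
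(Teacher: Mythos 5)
Your proposal is correct and follows essentially the same route as the paper's proof: pass to the extended space $\tilde U$ where $\tilde\delta_s$ are genuine metric dilations, view $\Phi^\alpha$ as a functional $\tilde\Phi$ on loops $(s_0,\delta_{s_0}\gamma)$, rerun the $\beta_s$-estimate of Lemma~\ref{lemma:PhiAlpha}, and repair the failure of exact homogeneity of $\sfd$ by combining the order estimate $f_j = O(\sfd_0^{m-1})$ with the tangent-cone comparison between $\sfd$ and the homogeneous metric $\hat\sfd$, before concluding with Lemma~\ref{lemma:StoL}. The paper does exactly this (with the explicit choice $\rho = 1/2$, i.e.\ $\tfrac{1}{2}\hat\sfd \leq \sfd \leq \tfrac{3}{2}\hat\sfd$), so the two arguments coincide in all essential respects.
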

\begin{proof}
It is sufficient to consider the case $\mathfrak{g} = \mathbb{R}$. We consider $\scrL(0,U)$ as a subset $\scrL(\tilde U)$. Define $\tilde \Phi: \scrL(\tilde U) \to \mathbb{R}$ by $\tilde \Phi(\tilde \gamma) = \Phi^\alpha(\nu(\gamma))$. Consider any loop $\tilde \gamma(t) = (s_0, \delta_{s_0}\gamma(t))$ where $\gamma$ is a horizontal loop in $U$ of length $\ell(\gamma)$ and $s_0 \geq 1$. Assume that $\ell(\tilde \gamma) \leq \eps$ so that  $s_0 \ell(\gamma) \leq \eps$. Furthermore, choose $\eps$ sufficiently small such that $\frac{1}{2} \hat \sfd(0, x) \leq \sfd(0,x) \leq \frac{3}{2}\hat \sfd(0,x)$ whenever $\sfd(0,x) < 2\eps$.

Write $\iota_S\alpha = \sum_{i=1}^n f_j dz_i$ and define $\beta_s$ as
$$\beta_s = \sum_{j=1}^n s^{w_j} \left(\int_0^1 r^{w_j-1} (\delta_{sr}^*f_j) dr \right) dz_j.$$
By possibly choosing $\eps$ smaller, we can assume that
$$| f_j|(x) \leq C_j \sfd(0,x)^{m-1} \leq (\frac{3}{2})^{m-1} C_j \hat \sfd(0,x)^{m-1},$$
whenever $x < \eps$. Since $\sfd(0, \delta_{s_0} \gamma(t)) < \eps$, we have
\begin{enumerate}[\rm (i)]
\item $\tilde \Phi (\tilde \delta_s \tilde \gamma) = \Phi^\alpha(\delta_{s_0 s} \gamma) = \int_0^1 \beta_{s_0 s} (\dot \gamma(t)) dt$,
\item $| \Phi^\alpha(\delta_{s_0 s} \gamma)| \leq \ell(\gamma) \int_0^1 | \beta_{s_0 s} |_{g^*}(\gamma(t)) \, dt$,
\item If $C_0 = \max_{1 \leq j \leq n_1}  \{ C_j\} $, then for $0 < s \leq 1$,
\begin{align*} |\beta_{s_0s}|_{g^*}(x) & \leq \frac{3^{m-1} n_1 s_0^m s^m}{2^{m-1}}  C_0 \left(\int_0^1 r^{m-1} dr \right) \hat \sfd(0,x)^{m-1} \\
&  \leq \frac{3^{m-1} n_1 s_0^m s^m}{m}  C_0 \sfd(0,x)^{m-1}.
\end{align*}
As a consequence, $\|\Phi^\alpha(\delta_{s_0 s} \gamma)\| \leq \frac{3n_1 C_0 \eps^m }{2^{m-1} m} s^m$. \end{enumerate}
Hence, $\|\tilde \Phi( \delta_s \tilde \gamma) \| \leq \frac{3n_1 C_0 \eps^m }{2^{m-1} m} s^m$ and since $\tilde U$ a metric space with dilations, $\tilde \Phi = O(\ell^m)$ by Lemma~\ref{lemma:StoL} . As a consequence, we finally have $\Phi = O(\ell^m)$.
\end{proof}

\begin{proof}[Proof of Theorem~\ref{th:Curved}] 
Define $U$ and $\tilde U$ as above. Consider the pull-back bundle $\nu^* \pi: \nu^* P \to \tilde U$ and define $\pr_P: \nu^*P \to P$ by $(s,z, p) \mapsto p$ for $(s,z) \in \tilde U$ and $p \in P$. This gives us a pull-back connection $\tilde \omega = \pr_P^* \omega$ on $\nu^* \pi$, which has curvature $\pr_P^* \Omega$.

We may now modify the proof of Theorem~\ref{th:Flat} as in the proof of Lemma~\ref{lemma:Curved} for the result.
\end{proof}

\begin{remark}
The gauge free expression of $F^k_p$ found in Section \ref{sec:GaugeFree} can be used in Theorem \ref{th:Curved} as well.
\end{remark}

\begin{remark} \label{re:NotSurj}
Assume that $(z, U)$ is a coordinate system satisfying all conditions of a privileged coordinate system centered at $x_0$ except that $z(U) = V\subseteq \mathbb{R}^n$ where $V$ is some neighborhood of $0$. Define $\delta_s: \mathbb{R}^n \to \mathbb{R}^n$ by $\delta_s(z_1, \dots, z_n) = (sz_1, \dots, s^{w_i} z_i, \dots, s^{\lambda} z_n)$.
Consider the norm, $\|z\| = \sum_{i=1}^n |z_i|^{1/w_i}$, and define
$$V_r = \{ z\in \mathbb{R}^n \, : \, \| z \| \leq 1 \}.$$
Let $\eps > 0$ be such that $V_\eps \subseteq V$ and define $U^\vee = z^{-1} (V_\eps)$. We can then for $s \leq 1$, define dilations $\delta_s^\vee: U^\vee \to U^\vee$. Finally, taking a diffeomorphism $\phi: V_\eps \to \mathbb{R}^n$ such that $\phi| V_{\eps/2} = \id$, it follows that we can define a proper privileged coordinate system $\tilde z = \phi \circ z| U^\vee$ whose dilations equals $\delta^\vee_s$ for points close to $x_0$.

Hence, if we have a coordinate system $(z,U)$ which in privileged in all ways apart from being surjective, we can still define Taylor expansions as in Section~\ref{sec:GaugeFree} with these coordinates and Theorem~\ref{th:Curved} is still valid.
\end{remark}
\section{Increase of order in the sub-Riemannian case} \label{sec:Equivalent}
If $\sfd$ is the Carnot-Carath\'eodory metric of a sub-Riemannian structure~$(D, g)$, any curve of finite length is $D$-horizontal. Hence, when considering the expansion of holonomy in terms of length, the following concept is natural to consider. If $P \to M$ is a principal bundle and~$D$ is a subbundle of~$TM$ then two connections $\omega$ and $\tilde \omega$ are called \emph{$D$-equivalent} if $\omega(v) = \tilde \omega(v)$ for any $v \in TP$ with $\pi_* v \in D$. Let $[\omega]_D$ denote the equivalence class of $\omega$. By definition, $\Hol^\omega_p(\gamma) = \Hol^{\tilde \omega}_p(\gamma)$ for any $D$-horizontal loop $\gamma$ based at $x \in M$, $p \in P_x$ and $\tilde \omega \in [\omega]_D$.

Assume that $D$ is an equiregular subbundle with corresponding flag $D = D^1 \subseteq D^2 \subseteq \cdots \subseteq D^\lambda$. Denote by $D^0$ the zero section of $TM$. Let $\Ann(D^j)$ denote the subbundle of $T^*M$ of covectors vanishing on $D^j$. 
\begin{definition}
A two-vector-valued one-form $\chi\in \Gamma(T^*M \otimes \bigwedge^2 TM)$ is called a \emph{selector} of $D$ if it satisfies the following two properties.
\begin{enumerate}[\rm (i)]
\item For $0 \leq j \leq r-1$, $\chi(D^{j+1}) \subseteq \bigwedge^{2} D^j$.
\item For $1 \leq j \leq r-1$ and any $\alpha \in \Gamma(\Ann(D^j))$,
$$v \mapsto \alpha(v) + d\alpha(\chi(v))$$
vanishes on $D^{j+1}$.
\end{enumerate}
\end{definition}
Relative to such a selector $\chi$, the corresponding contraction operator 
$$\iota_\chi : \bigwedge^2 T^*M \to T^*M$$
is defined by $(\iota_\chi \eta)(v) = \eta(\chi(v))$, $v \in T_x M$, $\eta \in \bigwedge^2 T^*_x M$, $x \in M$.
It was shown in \cite{CGJK15} that any equiregular subbundle $D$ has at least one selector. Actually, from the construction in [3, Lemma~2.7], we know that $D$ has a selector such that $\chi(D^j) \subseteq \bigoplus_{i=1}^{j-1} D^i \oplus D^{j-i}$ for any $j =2, \dots, \lambda$. From the same reference, we also have the following relation between holonomy along $D$-horizontal curves and selectors.
\begin{theorem} \label{th:HorHol}
Let $\chi$ be a selector of $D\subseteq TM$. Let $\omega$ be an arbitrary connection on a principal bundle $P \to M$. Then there exists a unique connection $\tilde \omega \in [\omega]_D$ such that its curvature $\tilde \Omega$ satisfies
$$\iota_\chi \tilde \Omega = 0.$$
Furthermore, for any $x \in M$ and $p \in P_x,$ this connection satisfies
$$\left\{ \Hol^{\tilde \omega}_p(\gamma) \, : \begin{array}{c} \text{$\gamma$ is $D$-horizontal} \\ \gamma(0) = \gamma(1) = x \end{array} \right\} = \left\{ \Hol^{\tilde \omega}_p(\gamma) \, : \, \gamma \in \scrL(x) \right\}.$$
Finally, if $\Omega$ is the curvature of $\omega$, then
\begin{eqnarray} \label{tildeomega} \tilde \omega &=& \omega +  \iota_\chi \sum_{j=1}^{\lambda-1} \binom{\lambda-1}{j} (L^\omega \iota_\chi)^{j-1} \Omega, \\
\label{tildeOmega} \tilde \Omega &=& (\id + L^\omega \iota_\chi)^{\lambda-1} \Omega,\end{eqnarray}
where $\lambda$ is the step of $D$ and $L^\omega: \Gamma(T^*M \otimes \Ad(P)) \to \Gamma(\bigwedge^2 T^*M \otimes \Ad(P))$ is given by
\begin{equation} \label{Lomega} L^\omega \alpha = d^{\nabla^\omega} \alpha + \frac{1}{2} [\alpha, \alpha].
\end{equation}
\end{theorem}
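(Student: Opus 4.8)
The plan is to establish the three assertions in turn, treating existence--uniqueness and the closed formulas together since they share one recursion, and handling the holonomy equality separately. I would begin by recording the affine structure of the equivalence class: any $\tilde\omega\in[\omega]_D$ differs from $\omega$ by a tensorial (horizontal, $G$-equivariant) $\Ad(P)$-valued one-form $\eta$, and $\tilde\omega\in[\omega]_D$ is equivalent to $\eta\in\Gamma(\Ann(D^1)\otimes\Ad(P))$, i.e.\ $\eta$ vanishes on $D=D^1$. The structure equation then gives $\tilde\Omega=\Omega+d^{\nabla^\omega}\eta+\tfrac12[\eta,\eta]=\Omega+L^\omega\eta$, so the task is to find such an $\eta$ with $\iota_\chi(\Omega+L^\omega\eta)=0$. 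Two consequences of the selector axioms drive everything: since $\chi(D^1)\subseteq\bigwedge^2 D^0=0$ by property (i), any one-form of the shape $\iota_\chi\beta$ automatically vanishes on $D^1$, so every correction built from $\iota_\chi$ stays inside $[\omega]_D$; and the covariant form of property (ii) states that $(\id+\iota_\chi d^{\nabla^\omega})\alpha$ vanishes on $D^{j+1}$ whenever $\alpha$ vanishes on $D^j$.

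The heart is an induction along the flag. I would construct connections $\omega=\omega_1,\dots,\omega_\lambda=\tilde\omega$ with $\omega_{r+1}=\omega_r+\iota_\chi\Omega_r$, and show that $\iota_\chi\Omega_r$ vanishes on $D^r$. The base case $r=1$ is free from the remark above. For the step, put $\alpha=\iota_\chi\Omega_r$, which vanishes on $D^r$; then $\iota_\chi\Omega_{r+1}=\alpha+\iota_\chi L^{\omega_r}\alpha$, and on $D^{r+1}$ both the quadratic term $\tfrac12\iota_\chi[\alpha,\alpha]$ and the connection-dependent term $\iota_\chi[\eta_r,\alpha]$ vanish, since each is contracted against a two-vector $\chi(v)\in\bigwedge^2 D^r$ on which $\alpha$ already vanishes. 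Hence on $D^{r+1}$ we have $\iota_\chi\Omega_{r+1}=(\id+\iota_\chi d^{\nabla^\omega})\alpha$, which vanishes by property (ii); after $\lambda-1$ steps $\iota_\chi\tilde\Omega$ vanishes on $D^\lambda=TM$. Uniqueness uses the same two facts: the difference $\psi$ of two solutions vanishes on $D^1$ and satisfies $\iota_\chi d^{\nabla^{\tilde\omega}}\psi=-\tfrac12\iota_\chi[\psi,\psi]$, whose right-hand side is again invisible to $\iota_\chi$ at each level, so property (ii) forces $\psi$ to vanish on each $D^{j+1}$ in turn, giving $\psi=0$. Unwinding the recursion and collecting binomial coefficients then yields $\tilde\Omega=(\id+L^\omega\iota_\chi)^{\lambda-1}\Omega$ and $\tilde\omega=\omega+\iota_\chi\sum_{j=1}^{\lambda-1}\binom{\lambda-1}{j}(L^\omega\iota_\chi)^{j-1}\Omega$.

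For the holonomy equality, the inclusion of the $D$-horizontal holonomy set in the full one is immediate. For the reverse I would invoke the horizontal Ambrose--Singer description from the same reference, which---because $D$ is bracket-generating and $M$ connected---identifies both the full and the $D$-horizontal holonomy Lie algebras with spans of $\tilde\Omega$ and its iterated $\nabla^{\tilde\omega}$-derivatives evaluated on suitable vectors. The defining relation $\iota_\chi\tilde\Omega=0$, read through $\chi(D^{j+1})\subseteq\bigwedge^2 D^j$ together with the reconstruction in property (ii), lets one rewrite each generator involving a $D^{j+1}$-direction in terms of generators involving only $D^j$-directions; a downward induction along the flag reduces every generator to purely horizontal ones, so the two Lie algebras coincide and hence so do the two holonomy sets.

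I expect the genuine obstacle to be this last step: the construction of $\tilde\omega$ is essentially \emph{forced} once the selector axioms are available, whereas realizing the full holonomy by $D$-horizontal loops requires the horizontal Ambrose--Singer machinery and the careful downward reduction, and it is precisely there that the full strength of the selector conditions is consumed. The only secondary subtlety is the bookkeeping of the nonlinear bracket terms in $L^\omega$: although they are invisible to $\iota_\chi$ at each step (which is what makes the vanishing recursion effectively linear), they must still be tracked to match the closed expressions for $\tilde\omega$ and $\tilde\Omega$ exactly.
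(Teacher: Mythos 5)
First, a point of context: the paper contains no proof of Theorem~\ref{th:HorHol} at all; it is imported verbatim from \cite{CGJK15} (``From the same reference, we also have the following\dots''), so your attempt can only be judged on its own terms and against the mechanism visible in the paper's proof of the corollary that follows the theorem. Your existence--uniqueness argument is sound and is the natural one: property (i) of a selector makes every correction of the form $\iota_\chi\beta$ vanish on $D^1$, hence admissible in $[\omega]_D$; property (ii), upgraded to $\Ad(P)$-valued one-forms and $d^{\nabla^\omega}$ (an upgrade which itself needs property (i) to kill the $\nabla^\omega\varphi\wedge\beta$ contributions on $\bigwedge^2 D^j$), drives the induction $\omega_{r+1}=\omega_r+\iota_\chi\Omega_r$ with $\iota_\chi\Omega_r$ vanishing on $D^r$, and the same mechanism yields uniqueness. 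That part I accept.

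There are, however, two genuine gaps. The first concerns the closed formulas \eqref{tildeomega}--\eqref{tildeOmega}, which do \emph{not} follow by ``unwinding the recursion and collecting binomial coefficients.'' Your recursion produces the curvature update $\Omega_{r+1}=\Omega_r+L^{\omega_r}(\iota_\chi\Omega_r)$, with the operator attached to the \emph{updated} connection, whereas \eqref{tildeOmega} uses $L^{\omega}$ of the base connection throughout; since $d^{\nabla^{\omega_r}}\alpha=d^{\nabla^{\omega}}\alpha+[\omega_r-\omega,\alpha]$, the discrepancy at step $r$ is the cross term $[\omega_r-\omega,\iota_\chi\Omega_r]$. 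Concretely, for $\lambda=3$, writing $\Omega_2=(\id+L^\omega\iota_\chi)\Omega$, the curvature of $\omega_3=\omega+\iota_\chi\Omega+\iota_\chi\Omega_2$ equals $(\id+L^\omega\iota_\chi)^2\Omega+[\iota_\chi\Omega,\iota_\chi\Omega_2]$, and the extra bracket has no pointwise reason to vanish: evaluated on $X\in D^3$, $Y\in D^2$ it reduces to $-\left[\Omega(\chi(Y)),\Omega_2(\chi(X))\right]$, a bracket of two generic elements of the fiber of $\Ad(P)$. Separately, the binomial coefficients in \eqref{tildeomega} arise from the hockey-stick identity $\sum_{r=1}^{\lambda-1}(\id+A)^{r-1}=\sum_{j=1}^{\lambda-1}\binom{\lambda-1}{j}A^{j-1}$, which is valid only for \emph{linear} $A$, while $L^\omega\iota_\chi$ carries the quadratic term $\frac{1}{2}[\alpha,\alpha]$. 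So exactly the ``bookkeeping'' you set aside as secondary is the entire content of the final assertion of the theorem; one must show these nonlinear cross terms cancel (or control them some other way), and your text does not do this.

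The second gap is in the holonomy equality. Matching Lie algebras via an Ambrose--Singer-type theorem identifies only the \emph{identity components} of the two groups; two Lie subgroups of $G$ with the same Lie algebra need not coincide, and both sets in the statement are full (not restricted) holonomy groups, which are in general disconnected. Moreover, the ``horizontal Ambrose--Singer machinery from the same reference'' is part of the very circle of results being proved in \cite{CGJK15}, so invoking it in a blind proof is circular. The argument that actually closes this step --- and it is the one visible in this paper's own proof of the corollary following Theorem~\ref{th:HorHol} --- is an orbit-theorem argument: using the selector, write any $Z\in\Gamma(D^{j+1})$ as $Z_0+\sum_i[X_i,Y_i]$ with $Z_0\in\Gamma(D^j)$ and $\chi(Z)=\sum_i X_i\wedge Y_i$; the condition $\iota_\chi\tilde\Omega=0$ then gives $hZ=hZ_0+\sum_i[hX_i,hY_i]$ for the $\tilde\omega$-horizontal lifts, so by induction the lifts of \emph{all} vector fields lie in the Lie algebra of vector fields generated by lifts of sections of $D$. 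By the Orbit Theorem, the orbit of $\{hX:\,X\in\Gamma(D)\}$ through $p$ then coincides with the orbit of $\ker\tilde\omega$, and intersecting with the fiber $P_x$ gives equality of the two holonomy sets, connected components included. Lie-algebra equality alone does not deliver this conclusion.
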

We see that a choice of selector gives us a unique way of choosing a connection in a $D$-equivalence class, which is ``optimal'', in the sense that the holonomy of any loop $\gamma$ is also the holonomy of some $D$-horizontal curve. Recall that the exterior covariant derivative $d^{\nabla^\omega}$ of $\Ad(P)$-valued $j$-forms is determined by the following rules,
\begin{enumerate}[\rm (i)]
\item For $j = 0$, any $d^{\nabla^\omega} \varphi = \nabla^\omega \varphi$ for any $\varphi \in \Gamma(\Ad(P))$,
\item If $\alpha$ is an $\Ad(P)$-valued $j$-form and $\beta$ a real-valued form
$$d^{\nabla^\omega} (\alpha \wedge \beta) = d^{\nabla^\omega} \alpha \wedge \beta + (-1)^j \alpha \wedge d\beta.$$
\end{enumerate}

If the increase in the growth vector is maximal, then we can improve our expansion by changing our connection to a $D$-equivalent one. Given $n_1 = \mathrm{rank}(D)$, the maximal increase in the growth vector $(n_1,\ldots,n_\lambda)$ is given by the free nilpotent Lie
algebra. The free nilpotent Lie algebra $\mathfrak{f}[n_1; k]$ of step $k$ with $n_1$ generators is defined as the free Lie algebra on $n_1$ generators divided by the ideal generated by brackets of length greater than $k$. If we define $\nu[n_1; k]= \mathrm{rank}(\mathfrak{f}[n_1; k])$, then
$$n_k \leq\nu[n_1;\lambda].$$
Remark in particular that $n_2 \leq\nu[n_1; 2]=\frac{1}{2}n_1(n_1 -1)$.

\begin{corollary}
Let $(M, D, g)$ be a sub-Riemannian manifold with $D$ equiregular of step $\lambda$, with canonical flag $D \subseteq D^2 \subseteq \cdots \subseteq D^\lambda =TM$ and with growth vector $(n_1, \dots, n_\lambda)$. Let $\omega$ be some connection on $\pi: P \to M$.
\begin{enumerate}[\rm (a)]
\item Assume that $\tilde \Omega$ defined as in \eqref{tildeOmega} vanishes on $\sum_{i=1}^{k-1} D^i \oplus D^{k-i}$. Then 
$$\Hol_p^\omega = \exp \left( - \tilde F^{2k+1}_p + O(\ell^{2k+2}) \right),$$
where $\tilde F^j_p(\gamma) = \int_{\disk(\gamma)} \Tay_p^{\tilde \omega}(\tilde \Omega; j)$, $\tilde \omega$ is defined as in \eqref{tildeomega} and $\Tay_p^{\tilde \omega} (\tilde \Omega; j)$ is defined relative to some privileged coordinate system centered at $x \in M$.
\item Assume that $n_k = \nu[n_1,k]$. Let $\chi$ be a selector satisfying
$$\chi(D^j) \subseteq \bigoplus_{i=1}^{j-1} D^i \oplus D^{j-i}, \qquad j=2, \dots, k,$$
and define $\tilde \Omega$ as \eqref{tildeOmega} relative to $\chi$. Then $\tilde \Omega$ vanish on $\sum_{i=1}^{k-1} D^i \oplus D^{k-i}$.
\end{enumerate}
\end{corollary}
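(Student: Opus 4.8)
The plan is to treat the two parts separately: part (a) is obtained by feeding the hypothesis into the asymptotic machinery of Sections~\ref{sec:GaugeFree}--\ref{sec:Curved}, while part (b) is a pointwise statement about the nilpotentization combined with the second Bianchi identity. For (a), the starting point is that $\tilde\omega$ and $\omega$ lie in the same $D$-equivalence class, so by Theorem~\ref{th:HorHol} we have $\Hol^\omega_p(\gamma) = \Hol^{\tilde\omega}_p(\gamma)$ for every $D$-horizontal loop, and in the sub-Riemannian setting every finite-length loop is $D$-horizontal. Hence it suffices to expand the holonomy of $\tilde\omega$, and the entire content is to convert the vanishing hypothesis into a weight estimate: I claim that if $\tilde\Omega$ vanishes on $\sum_{i=1}^{k-1} D^i\wedge D^{k-i}$, then $\sigma^*\tilde\Omega$ has weight $\geq k+1$ at $x$, so that $m_{\tilde\Omega}\geq k+1$. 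Granting this, I set $j=2k+1$ in the gauge-free expansion (Theorem~\ref{th:Taylor}, valid in privileged coordinates by Theorem~\ref{th:Curved}); since $2k+1 < 2(k+1)\leq 2m_{\tilde\Omega}$, it yields $\Hol^{\tilde\omega}_p = \exp(-\tilde F^{2k+1}_p + O(\ell^{2k+2}))$, which together with the first observation is the assertion.

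To prove the weight estimate I would induct on $m$, from $m=2$ up to $m=k$, on the statement that $\tilde\Omega$ has weight $\geq m$. Fix an adapted frame $X_1,\dots,X_n$ with $X_a\in\Gamma(D^{w_a})$ and with $(-w_a)$-homogeneous leading part $Z_a=\lim_{s\downarrow 0}s^{w_a}\delta_s^*X_a$, so that $R_a:=X_a-Z_a$ has weight $\geq 1-w_a$; the dual coframe $\theta^a$ of $\{Z_a\}$ is homogeneous of weight $w_a$, which gives the convenient characterization that a two-form $\alpha$ has weight $\geq m$ precisely when $\alpha(Z_a,Z_b)$ has weight $\geq m-w_a-w_b$ for all $a,b$. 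The base case is automatic since every two-form has weight $\geq 2$. For the inductive step, when $w_a+w_b\leq k$ the two-vector $X_a\wedge X_b$ is a section of $\sum_{i=1}^{k-1}D^i\wedge D^{k-i}$, so $\tilde\Omega(X_a,X_b)\equiv 0$; expanding via $X_a=Z_a+R_a$ and using the inductive bound weight $\geq m$ on $\tilde\Omega$, every term containing an $R_a$ or $R_b$ has weight at least $m+1-w_a-w_b$, hence does not affect the weight-$(m-w_a-w_b)$ component, so that component of $\tilde\Omega(Z_a,Z_b)$ equals that of $\tilde\Omega(X_a,X_b)=0$ and vanishes. When instead $w_a+w_b\geq k+1$ the target weight $m+1-w_a-w_b$ is nonpositive and there is nothing to check. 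In both cases $\tilde\Omega(Z_a,Z_b)$ has weight $\geq m+1-w_a-w_b$, so $\tilde\Omega$ has weight $\geq m+1$, and the induction closes at $m+1=k+1$.

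For part (b) the argument is linear-algebraic and carried out on the graded nilpotentization $\mathfrak{g}=\bigoplus_j\mathfrak{g}_j$, $\mathfrak{g}_j=D^j/D^{j-1}$. The relation $\iota_\chi\tilde\Omega=0$ from Theorem~\ref{th:HorHol} says $\tilde\Omega$ annihilates $\mathrm{im}(\chi)$, and the assumption $\chi(D^j)\subseteq\bigoplus_{i=1}^{j-1}D^i\wedge D^{j-i}$ keeps these images inside the graded pieces $\mathcal{W}_m=\bigoplus_{i+i'=m}\mathfrak{g}_i\wedge\mathfrak{g}_{i'}$. I would decompose each $\mathcal{W}_m$ (for $m\leq k$) using the bracket map $\beta_m\colon\mathcal{W}_m\to\mathfrak{g}_m$: the defining selector property makes $\chi$ a right inverse of $\beta_m$, so $\mathcal{W}_m=\mathrm{im}(\chi)\oplus\ker\beta_m$, and the freeness hypothesis $n_m=\nu[n_1,m]$ is exactly the statement that $\ker\beta_m$ consists only of the Jacobi relations. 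On $\mathrm{im}(\chi)$ the form $\tilde\Omega$ vanishes by $\iota_\chi\tilde\Omega=0$; on the Jacobi part it vanishes by the second Bianchi identity $d^{\nabla^{\tilde\omega}}\tilde\Omega=0$, which is the curvature incarnation of Jacobi and expresses the cyclic sum $\tilde\Omega(X,[Y,Z])+\mathrm{cyc}$ in terms of covariant derivatives $(\nabla^{\tilde\omega}\tilde\Omega)(Y,Z)$. Running an induction on the degree $m$, those covariant-derivative terms only pair $\tilde\Omega$ against two-vectors in $\sum_{i+i'\leq m-1}D^i\wedge D^{i'}$, already shown to vanish, so the Jacobi combinations are annihilated too; summing the two mechanisms over $m\leq k$ gives $\tilde\Omega|_{\sum_{i=1}^{k-1}D^i\wedge D^{k-i}}=0$.

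The main obstacle is in part (b): the splitting $\mathcal{W}_m=\mathrm{im}(\chi)\oplus\ker\beta_m$ and the vanishing of the Bianchi right-hand side are transparent at the symbol level, where one may use the flat connection $\bnabla$ with $\bnabla Z_a=0$ to kill all covariant-derivative terms, but transferring the conclusion to the genuine subbundles $D^i\wedge D^{k-i}$ on $M$ requires controlling the higher-weight corrections. I expect this to be handled by choosing a connection on $TM$ adapted to the flag and organizing the induction by flag degree, so that at each stage the correction terms pair $\tilde\Omega$ only against two-vectors of strictly lower degree, where it has already been shown to vanish; freeness is used precisely to ensure that no relations beyond Jacobi occur up to degree $k$, so that the two vanishing mechanisms are jointly exhaustive on $\mathcal{W}_m$.
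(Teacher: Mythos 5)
Your proposal is correct in substance, and in part (b) it takes a genuinely different route from the paper. For part (a) you do what the paper does: Theorem~\ref{th:HorHol} to replace $\omega$ by $\tilde\omega$ (legitimate, since finite-length loops are $D$-horizontal), then the gauge-free expansion of Theorem~\ref{th:Taylor} in a privileged coordinate system via Theorem~\ref{th:Curved}; the paper's proof of (a) is a single sentence that simply \emph{asserts} the implication ``vanishing on $\sum_{i=1}^{k-1}D^i\oplus D^{k-i}$ implies weight $\geq k+1$'', and your induction via the frame splitting $X_a = Z_a + R_a$ is a correct way of supplying that missing detail. For part (b), however, the paper argues upstairs on $P$ and never touches the nilpotentization or the Bianchi identity: it sets $\mathcal{E}^j=\{w\in\ker\tilde\omega : \pi_* w\in D^j\}$, uses the selector identity $Z = Z_0+\sum_i[X_i,Y_i]$ together with $\iota_\chi\tilde\Omega=0$ to show that horizontal lifts of sections of $D^{j+1}$ lie in the filtration $\underline{\mathcal{F}}^{j+1}=\underline{\mathcal{F}}^{j}+[\underline{\mathcal{F}}^{1},\underline{\mathcal{F}}^{j}]$ generated by brackets of horizontal lifts of $D$, and then closes by pure rank counting: $\rank \mathcal{F}^j_x\leq \nu[n_1,j]=n_j=\rank\mathcal{E}^j$ forces $\mathcal{F}^j=\mathcal{E}^j$ for $j\leq k$, so brackets of horizontal lifts stay horizontal, which is exactly the asserted vanishing of $\tilde\Omega$. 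Your argument stays on $M$, splits $\mathcal{W}_m=\mathrm{im}(\chi)\oplus\ker\beta_m$, and kills the two summands by $\iota_\chi\tilde\Omega=0$ and by Bianchi plus induction. The paper's version is shorter and needs no algebraic input beyond the dimension bound; yours is more structural, explains \emph{why} freeness is the right hypothesis (no relations beyond Jacobi up to degree $k$), and localizes the two vanishing mechanisms.

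The one genuine gap is your claim that the freeness hypothesis ``is exactly the statement that $\ker\beta_m$ consists only of the Jacobi relations''. It is not a restatement of $n_m=\nu[n_1,m]$: it is a consequence of it combined with the nontrivial fact that the free Lie algebra has no relations in degrees $\leq k$ other than antisymmetry and Jacobi --- equivalently, that $H_2$ of the free Lie algebra vanishes, so that by Hopf's formula $H_2$ of the free nilpotent algebra $\mathfrak{f}[n_1;k]$ is concentrated in degree $k+1$. That fact must be cited or proved, and you also need the observation (made explicitly in the paper's proof) that $n_k=\nu[n_1,k]$ forces $n_j=\nu[n_1,j]$ for all $j\leq k$, since your induction invokes freeness at every degree $m\leq k$. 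Granting these, your ``main obstacle'' resolves exactly as you sketch: once $\tilde\Omega$ vanishes on $\sum_{i+i'\leq m-1}D^i\oplus D^{i'}$ it descends to $\mathcal{W}_m$; graded Jacobi classes are represented by honest Jacobi combinations of vector fields $X\in\Gamma(D^{i_1})$, $Y\in\Gamma(D^{i_2})$, $Z\in\Gamma(D^{i_3})$; and the bracket form of $d^{\nabla^{\tilde\omega}}\tilde\Omega=0$ expresses $\tilde\Omega$ on such a combination through terms $\nabla^{\tilde\omega}_X\bigl(\tilde\Omega(Y,Z)\bigr)$, which vanish because $\tilde\Omega(Y,Z)$ is identically zero by the induction hypothesis --- so no auxiliary connection on $TM$ is needed at all.
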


\begin{proof}
The first statement follows by Theorem~\ref{th:HorHol} and from the fact that if $\tilde \Omega$ vanish on $\oplus_{i=1}^{k-1} D^i \oplus D^{k-i}$, it must have weight $\geq k+1$. For the second statement, observe that $n_j = \nu[n_1,j]$ for any $j =1, \dots, k$. Let $hX$ denote the horizontal lift of a vector field on $M$ to $P$ with respect to $\tilde \omega$. Write $\mathcal{E} = \ker \tilde \omega$, $\mathcal{E}^j = \{ w \in \mathcal{E} \, : \, \pi_* w  \in D^j \}$ and $\underline{\mathcal{E}}^j = \Gamma(\mathcal{E}^j)$. Note that $\rank \mathcal{E}^j = n_j$. If we then define $\underline{\mathcal{F}}^1 = \underline{\mathcal{E}}^1$ and $\underline{\mathcal{F}}^{j+1} = \mathcal{F}^j +[\underline{\mathcal{F}}^{1}, \underline{\mathcal{F}}^{j}]$, we claim that $\underline{\mathcal{E}}^j = \underline{\mathcal{F}}^j$ for any $i=1, \dots, k$.

We will first show that  $\underline{\mathcal{E}}^j = \underline{\mathcal{F}}^j$ for any $j =1, \dots, \lambda$ by induction. Assume that $\underline{\mathcal{E}}^j = \underline{\mathcal{F}}^j$ for some $j$. For any vector field $Z \in\Gamma(D^{j+1})$ with $\chi(Z) = \sum_{i=1}^l X_i \wedge Y_i $ and for any $\alpha$, we have $\alpha(Z - \sum_{i=1}^l [X_i, Y_i]) = \alpha(Z) + d\alpha(\chi(Z)) = 0$. This means that
$$Z = Z_0 + \sum_{i=1}^l [X_i, Y_i] ,$$
for some $Z_0 \in \Gamma(D^j)$. By our assumptions $X_i \wedge Y_i$ also take values in $\bigoplus_{i=1}^j D^i \oplus D^{j+1-i}$. Since we know that $\tilde \Omega(\chi(Z)) = 0 = \sum_{i=1}^l \Omega(X_i, Y_i)$, we have that
$$hZ = hZ_0 + \sum_{i=1}^l [hX_i, hY_i] \in \underline{\mathcal{F}}^{j+1} .$$
This completes the induction step.

As a consequence of the above statement,
$$\mathcal{E}_x^j \subseteq \mathcal{F}^j_x = \{ Y(x) \, : \, Y \in \underline{F}^j \} \subseteq T_x M,$$
for any $x \in M$. However, since $\mathcal{E}^1$ has rank $n_1$, we must have $\rank \mathcal{F}^j_x \leq \nu[n_1,k] = \rank \mathcal{E}^j$ for $j = 1, \dots, k$ and so $\underline{\mathcal{E}}^j = \underline{\mathcal{F}}^j$. This means that for any vector field $X \in \Gamma(D^i)$ and $Y \in \Gamma(D^{k-i})$, we have
$$[hX, hY] \in \Gamma(\mathcal{E}^k),$$
which is equivalent to the curvature vanishing on $\sum_{i=1}^k D^i \oplus D^{k-i}$.

\end{proof}

\section{Examples} \label{sec:Examples}

\subsection{ Euclidean space} \label{sec:Rn}
Consider the special case when $N$ is an inner product space. Let $x_0 \in N$ be an arbitrary point. We may choose coordinates $(z_1, \dots, z_n)$ so that we can identify $N$ with $\mathbb{R}^n$ with the standard inner product and $x_0 =0$. Write $\partial_k = \frac{\partial}{\partial z_k}$. Let $\pi : P \to \mathbb{R}^n$ be a principal bundle with connection $\omega$ and curvature $\Omega$. Since any $\Omega$ has $m_\Omega \geq 2$, we have
$$\Hol_p^\omega = \exp\left(- F^3_p + O(\ell^4) \right),$$
with $F_p^k$ as in Theorem~\ref{th:Taylor} and $p \in P_0$. Write $\Omega = \frac{1}{2} \sum_{i,j=1}^n \Omega_{ij} dz_i \wedge dz_j$ with $\Omega_{ij} \in \Gamma(\Ad(P))$, $\Omega_{ij} = - \Omega_{ji}$. Then
\begin{align*}
F_p^3(\gamma) & = \frac{1}{2} \sum_{i,j=1}^n \int_{\disk \gamma} \left( \Omega_{ij}(p) + \sum_{k=1}^n z_k \nabla_{\partial_k}^\omega \Omega_{ij}(p) \right) dz_i \wedge dz_j \\
& = \frac{1}{2} \sum_{i,j=1}^n \Omega_{ij}(p) \int_{\gamma}  z_i dz_j + \frac{1}{3} \sum_{i,j,k=1}^n \nabla_{\partial_k}^\omega \Omega_{ij}(p) \int_\gamma z_i z_k dz_j .
\end{align*}
If $(\sigma, U)$ is any gauge around $0$ such that $\sigma(0) = p$, and we write $(\sigma^* \omega)(\partial_k) = \omega^\sigma_k$ and $\sigma^* \Omega_{ij} = \Omega_{ij}^\sigma$, then
\begin{align} \label{RnF3}
F_p^3(\gamma) & = \frac{1}{2} \sum_{i,j=1}^n \Omega_{ij}^\sigma(0) \int_{\gamma}  z_i dz_j \\ \nonumber
& \quad + \frac{1}{3} \sum_{i,j,k=1}^n \left( (\partial_k \Omega_{ij}^\sigma)(0) + [ \omega_k^\sigma(0), \Omega_{ij}^\sigma(0) ]\right) \int_\gamma z_i z_k dz_j .
\end{align}

\subsection{Riemannian manifolds} For any $n$-dimensional Riemannian manifold $(M,g)$ and point $x_0 \in M$, choose a normal coordinate system $(z,U)$ centered at $x_0$. This coordinate system satisfies all assumptions of privileged coordinate system except surjectivity. It follows from Remark~\ref{re:NotSurj} that for any $p \in P_{x_0}$, $\Hol_p^\omega = \exp\left(- F^3_p + O(\ell^4) \right)$ with $F_p^3$ written as in \eqref{RnF3} in the coordinates $(z,U)$.

\subsection{The Heisenberg group} \label{sec:Heisenberg}
Consider the $3$-dimensional Heisenberg group $N$. This is the Carnot group of step~$2$ with Lie algebra
$$\mathfrak{n} = \mathfrak{n}_1 \oplus \mathfrak{n}_2, \qquad \mathfrak{n}_1 = \spn \{ Z_1, Z_2\}, \qquad \mathfrak{n}_2 = \spn\{ Z_3\}.$$
where $\mathfrak{n}_2$ is the center and $[Z_1, Z_2] = Z_3$.
Denote the left invariant vector fields corresponding to $Z_1$, $Z_2$, $Z_3$ by the same symbol.
If we give the point $\exp(xZ_1 + y Z_2 +z Z_3)$ coordinates $(x, y, z)$ then
$$Z_1 = \frac{\partial}{\partial x} - \frac{1}{2} y \frac{\partial}{\partial z} , \qquad Z_2 = \frac{\partial}{\partial y} + \frac{1}{2} x \frac{\partial}{\partial z}, \qquad Z_3 = \frac{\partial}{\partial z}.$$
The corresponding coframe is given by $dx$, $dy$ and $\theta = dz + \frac{1}{2} (y dx - x dy)$.

We want to compute our approximation of holonomy on $N$. Using left translation, it is sufficient to consider loops based at $0$. We will use Theorem~\ref{th:HorHol}. The unique selector $\chi$ of $D$ is given by
$$\chi: Z_1 \mapsto 0, \qquad Z_2 \mapsto 0, \qquad Z_3 \mapsto Z_1 \wedge Z_2.$$
The corresponding contraction operator is
$$\iota_\chi : dx \wedge dy \mapsto \theta, \qquad dx \wedge \theta \mapsto 0, \qquad dy \wedge \theta \mapsto 0.$$
Write $\Omega = \Omega^1 dx \wedge \theta + \Omega^2 dy \wedge \theta + \Omega^3 dx \wedge dy$. Define
\begin{align*} \tilde \Omega & = \Omega + d^{\nabla^\omega} \Omega^3 \wedge \theta = \left(\Omega^1 + \nabla^\omega_{Z_1} \Omega^3 \right) dx \wedge \theta + \left( \Omega^1 + \nabla^\omega_{Z_2} \Omega^3\right) dy \wedge \theta \\
& =: \Psi^1 dx \wedge \theta + \Psi^2 dy \wedge \theta,
\end{align*}
and $\tilde \omega = \omega + \beta$, where
$$\beta = \iota_\chi \Omega = \Omega^3 \theta.$$
Notice that for any $\varphi \in \Gamma(\Ad(P))$ and vector field $X$, we have $\nabla^{\tilde \omega}_X \varphi = \nabla_X^\omega \varphi + [\beta(X), \varphi]$.

We now want to compute $\Tay_p^{\tilde \omega}(\tilde \Omega; 5) = \Tay_p^\omega(\Psi^1;2) dx\wedge \theta + \Tay_p^\omega(\Psi^2 ; 2) dy \wedge \theta$. With $\bnabla^{\tilde \omega}_\alpha$ defined as in Section~\ref{sec:WEF} and for $j = 1,2$,
\begin{align*} & \Tay_p(\Psi^j, 2) \\
& = \Psi^j(p) + z_1 \bnabla^{\tilde \omega}_1\Psi^j (p) + z_2 \bnabla^{\tilde \omega}_2\Psi^j (p) + z_3 \bnabla^{\tilde \omega}_3\Psi^j (p) \\
&\quad +\frac{1}{2} \left( z_1^2 \bnabla^{\tilde \omega}_{1,1} \Psi^j (p) + z_2^2 \bnabla^{\tilde \omega}_{2,2} \Psi^j (p)  \right) + \frac{z_1 z_2}{2} \left(  \bnabla^{\tilde \omega}_{1,2} \Psi^j (p) + z_2^2 \bnabla^{\tilde \omega}_{2,1} \Psi^j (p)  \right) \\
& = \Psi^j(p) + z_1 \bnabla^{\omega}_1\Psi^j (p) + z_2 \bnabla^{\omega}_2\Psi^j (p) + z_3 \left( \bnabla^{\omega}_3\Psi^j (p) + [\Omega^3, \Psi^j] \right) \\
& \quad +\frac{1}{2} \left( z_1^2 \bnabla^{\omega}_{1,1} \Psi^j (p) + z_2^2 \bnabla^{\omega}_{2,2} \Psi^j (p)  \right) + \frac{z_1 z_2}{2} \left(  \bnabla^{\omega}_{1,2} \Psi^j (p) + \textcolor{red}{z_2^2}\bnabla^{ \omega}_{2,1} \Psi^j (p)  \right) 
\end{align*} 

which equals
\begin{align*} & \Tay_p(\Psi^j, 2) \\
& = \Omega^j(p) + z_1 \bnabla^{\omega}_1\Omega^j (p) + z_2 \bnabla^{\omega}_2\Omega^j (p) + z_3 \left( \bnabla^{\omega}_3 \Omega^j (p) + [\Omega^3,  \Omega^j] \right) \\
& \quad +\frac{1}{2} \left( z_1^2 \bnabla^{\omega}_{1,1}  \Omega^j (p) + z_2^2 \bnabla^{\omega}_{2,2}  \Omega^j (p)  \right) + \frac{z_1 z_2}{2} \left(  \bnabla^{\omega}_{1,2} \Omega^j (p) + z_2^2 \bnabla^{ \omega}_{2,1}  \Omega^j (p)  \right) \\
& \quad +  \bnabla^{\omega}_j \Omega^3(p) + z_1 \bnabla^{\omega}_{1,j} \Omega^3 (p) + z_2 \bnabla^{\omega}_{2,j} \Omega^3 (p) + z_3 \left( \bnabla^{\omega}_{3, j} \Omega^3 (p) + [\Omega^3,  \bnabla^{\omega}_j\Omega^3] \right) \\
& \quad +\frac{1}{2} \left( z_1^2 \bnabla^{\omega}_{1,1,j}  \Omega^3 (p) + z_2^2 \bnabla^{\omega}_{2,2,j}  \Omega^3 (p)  \right) + \frac{z_1 z_2}{2} \left(  \bnabla^{\omega}_{1,2,j} \Omega^3 (p) + \bnabla^{ \omega}_{2,1,j}  \Omega^3 (p)  \right) .
\end{align*} 
In conclusion, we obtain
$$\Hol^\omega_p = \exp \left( - F_p^5 + O(\ell^6)\right),$$
where $F^5_p = \int_{\disk(\gamma)} \Tay_p^{\tilde \omega} (\tilde \Omega;5) = \int_\gamma q^1(x \theta - 2z dx) + \int_\gamma q^2 (y \theta - 2z dy)$, where
\begin{align*} q^j(z) & = \textcolor{red}{\frac{1}{3}} \Omega^j(p) +\frac{z_1}{4} \bnabla^{\omega}_1\Omega^j (p) + \frac{z_2}{4}  \bnabla^{\omega}_2\Omega^j (p) + \frac{z_3}{5} \left( \bnabla^{\omega}_3 \Omega^j (p) + [\Omega^3,  \Omega^j] \right) \\
& \quad +\frac{1}{10} \left( z_1^2 \bnabla^{\omega}_{1,1}  \Omega^j (p) + z_2^2 \bnabla^{\omega}_{2,2}  \Omega^j (p)  \right) + \frac{z_1 z_2}{10} \left(  \bnabla^{\omega}_{1,2} \Omega^j (p) + z_2^2 \bnabla^{ \omega}_{2,1}  \Omega^j (p)  \right) \\
& \quad + \frac{1}{3} \bnabla^{\omega}_j \Omega^3(p) + \frac{z_1}{4} \bnabla^{\omega}_{1,j} \Omega^3 (p) + \frac{z_2}{4} \bnabla^{\omega}_{2,j} \Omega^3 (p) + \frac{z_3}{5} \left( \bnabla^{\omega}_{3, j} \Omega^3 (p) + [\Omega^3,  \bnabla^{\omega}_j\Omega^3] \right) \\
& \quad +\frac{1}{10} \left( z_1^2 \bnabla^{\omega}_{1,1,j}  \Omega^3 (p) + z_2^2 \bnabla^{\omega}_{2,2,j}  \Omega^3 (p)  \right) + \frac{z_1 z_2}{10} \left(  \bnabla^{\omega}_{1,2,j} \Omega^3 (p) + \bnabla^{ \omega}_{2,1,j}  \Omega^3 (p)  \right) .
\end{align*}

\subsection{Sub-Riemannian Hopf fibration}  \label{sec:Hopf} 
Consider the Lie algebra $\mathfrak{su}(2)$ given by basis $X_1$, $X_2$ and $X_3$ and relations
$$[X_1,X_2] = X_3, \qquad [X_2,X_3] = X_1, \qquad [X_3,X_1] = X_2.$$
This can be considered as the Lie algebra of the group of matrices $\mathrm{SU}(2)$ of the form
$$a=\left( \begin{array}{cc} w_1 & w_2 \\ - \bar{w}_2 & \bar{w}_1 \end{array} \right) , \qquad w_1, w_2 \in \mathbb{C}, \quad |w_1|^2 + |w_2|^2 = 1.$$
In these coordinates and with $w_j = u_j + i v_j$, $j =1,2$, we can identify $X_1$, $X_2$ and $X_3$ with the left invariant vector fields
\begin{eqnarray*}
2X_1 & = & -u_2 \partial_{u_1} - v_2 \partial_{v_1} + u_1 \partial_{u_2} + v_1 \partial_{v_2}, \\
2X_2 & = & -v_2 \partial_{u_1} + u_2 \partial_{v_1} - v_1 \partial_{u_2} + u_1 \partial_{v_2}, \\
2X_3 & = & -v_1 \partial_{u_1} + u_1 \partial_{v_1} + v_2 \partial_{u_2} - u_2 \partial_{v_2} .
\end{eqnarray*}
Introduce a sub-Riemannian structure $(D,g)$ on $\mathrm{SU}(2)$ by defining $X_1$ and $X_2$ to be an orthonormal basis.

On the neighborhood $U = \{ a \in \mathrm{SU}(2) \, : \, u_1 > 0 \}$, define coordinates
$$z: U \to \mathbb{R}^3, \qquad a \mapsto \left(2u_2, 2v_2, 2v_1  \right).$$
This is a privileged coordinate system centered at the identity $(u_1, v_1, u_2, w_2) = (1,0,0,0)$ apart from the requirement of surjectivity.
Furthermore, the dilations $\delta_s:U\to U$ are well defined for $0 < s \leq 1$. 

The vector fields $X_1$, $X_2$, $X_3$ in the coordinate system $z$ are given by
\begin{eqnarray*}
X_1 & = & z_1 Y + \sqrt{1-|z|^2} \partial_{z_1} + z_3 \partial_{z_2} - z_2 \partial_{z_3}, \\
X_2 & = & z_2 Y - z_3 \partial_{z_1} + \sqrt{1-|z|^2} \partial_{z_2} +z_1 \partial_{z_3} , \\
X_3 & = & z_3 Y + z_2 \partial_{z_1} - z_1 \partial_{z_2} + \sqrt{1-|z|^2} \partial_{z_3} . \\
Y & = & \frac{1}{\sqrt{1-|z|^2}} \left( z_1 \partial_{z_1} + z_2 \partial_{z_2} + z_3 \partial_{z_3} \right)
\end{eqnarray*}

Define $\alpha_1, \alpha_2, \alpha_3$ as the coframe dual to $X_1, X_2, X_3$. In the coordinates $(z_1, z_2, z_3)$ this is given by
\begin{eqnarray*}
\alpha_1 & = &  \sqrt{1-|z|^2} dz_1 + z_3 dz_2 - z_2 dz_3 , \\
\alpha_2 & = & - z_3 dz_1 + \sqrt{1-|z|^2} dz_2 +z_1 dz_3  \\
\alpha_3 & = &  z_2 dz_1 - z_1 dz_2 + \sqrt{1-|z|^2} dz_3 . 
\end{eqnarray*}

Let $P \to \mathrm{SU}(2)$ be a principal bundle with connection $\omega$. We study the holonomy of horizontal loops based at $1$.
Let $\Omega$ be the curvature of $\omega$, given in the coframe by
$\Omega = \Omega^1 \alpha_1 \wedge \alpha_3 + \Omega^2 \alpha_2 \wedge \alpha_3 + \Omega^3 \alpha_1 \wedge \alpha_2$.
with $\Omega^j \in \Gamma(\Ad P)$. The unique selector $\chi: T^*M \to \bigwedge^2 T^*M$ is given by
$$\chi: X_1 \mapsto 0, \qquad X_2 \mapsto 0, \qquad X_3 \mapsto X_1\wedge X_2.$$
This allows us to us to use the connection $\tilde \omega = \omega + \Omega^3 \alpha^1$ and curvature
$$\tilde \Omega = (\Omega^1 + \nabla^\omega_{X_1} \Omega^3) \alpha_1 \wedge \alpha_3 + (\Omega^2 + \nabla^\omega_{X_2} \Omega^3) \alpha_2 \wedge \alpha_3 =: \Psi^1 \alpha_1 \wedge \alpha_3 + \Psi^2 \alpha_2 \wedge \alpha_3, $$
which has order $\geq 3$.

For $j = 1,2$, consider the vector fields $Z_j = \lim_{s \downarrow 0} s \delta^*_s X_j $ and define $Z_3 = [Z_1, Z_2] = \lim_{s \downarrow 0} s^2 \delta_s^* X_3$. Then
$$ Z_1 =  \partial_{z_1}  -  z_2 \partial_{z_3}  ,\quad Z_2  =  \partial_{z_2} + z_1 \partial_{z_3}, \quad Z_3  = \partial_{z_3} .$$
Furthermore, $S = z_1 Z_1+ z_2 Z_2 + 2z_3 Z_3.$
Let $\bnabla$ be the connection such that $Z_1$, $Z_2$, $Z_3$ are parallel. The corresponding frame is given by $dz_1$, $dz_2$ and $\theta = dz_3 + z_2 dz_1 - z_1 dz_2$. It follows that
$$\tilde \Omega = \Xi^1 dz_1 \wedge \theta+ \Xi^2 dz_2 \wedge \theta + \Xi^3 dz_1 \wedge dz_2,$$
with
\begin{eqnarray*}
\Xi^1 & = & \Psi^1  (1-z_1^2 - z_3^2) - \Psi^2  ( z_3\sqrt{1-|z|^2} + z_1 z_2),  \\
\Xi^2 & = & \Psi^1 (z_3  \sqrt{1-|z|^2} - z_2z_1) + \Psi^2 (  1-z_2^2 - z_3^2 ). \\
\Xi^3 & =& - \Psi^1 (1- \sqrt{1- |z|^2}) ( z_1\sqrt{1-|z|^2} + z_2 z_3 ) \\
& & + \Psi^2 ( 1- \sqrt{1-|z|^2}) ( z_1 z_3  - z_2 \sqrt{1-|z|^2} ), 
\end{eqnarray*}
Hence, $\Hol^\omega_p =  \exp\left( - F_p^5 + O(\ell^6) \right)$ where
\begin{align*} F^5_p(\gamma) & = \int_{\disk(\gamma)} \Tay_p^{\tilde \omega}(\Xi^1; 2) dz_1 \wedge \theta + \int_{\disk(\gamma)} \Tay_p^{\tilde \omega}(\Xi^2; 2) dz_2 \wedge \theta \\
& \quad + \int_{\disk(\gamma)} \Tay_p^{\tilde \omega}(\Xi^3; 3) dz_1 \wedge dz_2.
\end{align*}

%

\appendix
\section{Approximation of ODEs in Lie groups}

\subsection{Lie groups and right logarithmic derivatives}
Let $G$ be a connected Lie group with Lie algebra $\mathfrak{g}$.
For any continuous curve $[0, t_1] \to \mathfrak{g}$, $t \mapsto A(t)$, consider the initial value problem on $G$,
$$\dot a(t) = A(t) \cdot a(t), \qquad a(0) = 1.$$
Assume that for any $t \in [0, t_1]$ the curve $Q(t) = \exp^{-1} a(t)$ in $\mathfrak{g}$ is well defined. Note that 
$$d\exp:T_A \mathfrak{g} \to T_{\exp A} G, \qquad B \mapsto \exp(A) \cdot f(\ad A) B, \qquad f(z) = \frac{1 -e^{-z}}{z}.$$
Hence
$$f(\ad Q(t)) \dot Q(t) = A(t),\qquad Q(0) = 0,$$
and
\begin{equation} \label{EqQ} \dot Q(t) = g(\ad Q(t)) A(t), \qquad g(z) = \frac{z}{1- e^{-z}}. \end{equation}
Consider $\mathfrak{g}$ as a subalgebra of $\mathfrak{gl}(q,\mathbb{C})$ for some $q >0$ and let $\| \blank \|$ be a Banach algebra norm on $\mathfrak{gl}(q, \mathbb{C})$. Write $\| A\|_{L^\infty[0,t]}  = \sup_{s \in [0,t]} \| A(s)\|$.
\begin{lemma} \label{lemma:Picard}
Define constants $B_1$ and $B_2$ such that on the disk $\{ z \in \mathbb{C} \, : \, |z| \leq \pi \}$,
$$| g(z)| \leq B_1, \qquad |g'(z)| \leq B_2.$$
Define
$$\| \ad \| = \sup \{ \| \ad(A_1) A_2 \| \, : \, \|A_1\| = \|A_2\| = 1 \}.$$
Then there exists $\eps > 0$, such that for any $A:[0,T] \to \mathfrak{g}$ with
$$t \| A\|_{L^\infty[0,t]} < \eps,$$
we have 
$$\left\| Q(t) - \int_0^t A(s) \,ds \right\|    \leq  \frac{ \| \ad \| t^2 B_2^2}{1-  \| \ad \| B_2 \eps} \| A\|_{L^\infty[0,t]}^2.$$
\end{lemma}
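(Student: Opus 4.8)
The plan is to argue entirely at the level of the Lie algebra, using the differential equation \eqref{EqQ} for $Q(t)=\exp^{-1}a(t)$,
\[ \dot Q(t) = g(\ad Q(t))\,A(t), \qquad Q(0)=0, \]
and to read the sought inequality as a bound on the first-order remainder of $Q$ about the ``abelian'' approximation $\int_0^t A(s)\,ds$. The key structural fact is that $g(0)=\lim_{z\to 0} z/(1-e^{-z})=1$, so $g(\ad Q)-\id$ vanishes to first order in $Q$. Writing $g(z)-1=\int_0^1 g'(\tau z)\,z\,d\tau$ and using that $\tau\,\ad Q$ commutes with $\ad Q$, I would record the operator identity
\[ g(\ad Q)-\id = \Big(\int_0^1 g'(\tau\,\ad Q)\,d\tau\Big)\,\ad Q . \]
Applying this to $A(t)$ and integrating \eqref{EqQ} from $0$ to $t$ gives
\[ Q(t)-\int_0^t A(s)\,ds = \int_0^t \Big(\int_0^1 g'(\tau\,\ad Q(s))\,d\tau\Big)\,[\,Q(s),A(s)\,]\,ds, \]
which already exhibits the quadratic cancellation: the integrand carries a factor $[\,Q,A\,]=\ad(Q)A$, hence is controlled by $\|\ad\|\,\|Q(s)\|\,\|A(s)\|$ times the operator norm of $g'(\tau\,\ad Q(s))$.

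The next step is to convert the scalar bound $|g'|\le B_2$ on $\{|z|\le\pi\}$ into an operator bound. Expanding $g'$ in its Taylor series at $0$ (legitimate since $g$ is holomorphic on $|z|<2\pi$) and combining the Cauchy estimates for its coefficients with $\|\ad(Q)B\|\le\|\ad\|\,\|Q\|\,\|B\|$, I get
\[ \big\|\,g'(\tau\,\ad Q(s))\,\big\|_{\mathrm{op}} \ \le\ \frac{B_2}{1-\|\ad\|\,\|Q(s)\|/\pi}, \qquad \tau\in[0,1], \]
valid as long as $\|\ad\|\,\|Q(s)\|<\pi$; for $\|Q\|$ of size $O(\eps)$ this factor is $1+O(\eps)$ and is absorbed into the denominator of the claimed bound. (The companion bound $|g|\le B_1$ serves only to keep $\|\dot Q\|$ finite and $Q$ inside the domain of $\exp^{-1}$.) To guarantee such smallness I would run a bootstrap on $\|Q\|$ itself: the same identity yields the self-consistent integral inequality
\[ \|Q(t)\| \ \le\ \int_0^t\|A(s)\|\,ds \;+\; \|\ad\|\,B_2\int_0^t \|Q(s)\|\,\|A(s)\|\,ds, \]
and since the hypothesis $t\,\|A\|_{L^\infty[0,t]}<\eps$ forces $\int_0^t\|A\|<\eps$, Gr\"onwall's inequality (equivalently, summing the geometric series it generates) bounds $\|Q(t)\|$ by $t\,\|A\|_{L^\infty[0,t]}/(1-\|\ad\|B_2\eps)$. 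A standard continuity argument, starting from $Q(0)=0$, then shows that for $\eps$ small enough $\|\ad\|\,\|Q\|<\pi$ holds throughout, so the bootstrap closes.

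Finally I would substitute the two estimates back into the integral identity: bound $\|[\,Q(s),A(s)\,]\|\le\|\ad\|\,\|Q(s)\|\,\|A(s)\|$, pull out $\sup_{s,\tau}\|g'(\tau\,\ad Q(s))\|_{\mathrm{op}}\le B_2/(1-O(\eps))$, and insert the a priori bound $\|Q(s)\|\le s\,\|A\|_{L^\infty[0,t]}/(1-O(\eps))$. The surviving integral $\int_0^t s\,ds$ supplies the factor $t^2$, leaving two powers of $B_2$ and of $\|A\|_{L^\infty[0,t]}$, i.e. a bound of the claimed form $\|\ad\|\,t^2B_2^2\,\|A\|_{L^\infty[0,t]}^2/(1-\|\ad\|B_2\eps)$ after fixing $\eps$ small. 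I expect the main obstacle to be precisely the middle step: passing from the scalar holomorphic bounds on $g$ and $g'$ to operator-norm bounds on $g(\ad Q)$ and $g'(\ad Q)$ (a holomorphic functional calculus estimate, where the disk radius $\pi$ enters), together with closing the self-referential a priori control of $\|Q\|$ so that $\ad Q$ never leaves the region where those bounds are available. Once $\eps$ is chosen small enough for the bootstrap to close, the remaining steps are routine integrations.
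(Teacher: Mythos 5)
Your proposal is correct, and it rests on the same two quantitative pillars as the paper's argument: a bound on the remainder $Q(t)-\int_0^t A(s)\,ds$ that is linear in $\|Q\|_{L^\infty[0,t]}$ with constant $tB_2\|\ad\|\,\|A\|_{L^\infty[0,t]}$, and an a priori bound $\|Q\|_{L^\infty[0,t]}\le t\|A\|_{L^\infty[0,t]}/(1-O(\eps))$ obtained by absorption. The packaging, however, is genuinely different. The paper runs a Picard/contraction argument: it introduces $\mathscr{A}(Q)(t)=\int_0^t g(\ad Q(s))A(s)\,ds$, proves boundedness (via $B_1$) and the Lipschitz estimate (via $B_2\|\ad\|$) on the ball $\|Q\|_{L^\infty}\le\pi/\|\ad\|$, realizes $Q=\lim_{n\to\infty}\mathscr{A}^n(0)$, and gets the conclusion from one application of the Lipschitz bound to $Q-\mathscr{A}(0)$, noting that $\mathscr{A}(0)(t)=\int_0^t A(s)\,ds$ because $g(0)=1$. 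You instead write the remainder exactly, via $g(z)-1=\int_0^1 g'(\tau z)z\,d\tau$, as $\int_0^t\bigl(\int_0^1 g'(\tau\ad Q(s))\,d\tau\bigr)[Q(s),A(s)]\,ds$, and close with Gr\"onwall plus a continuity bootstrap keeping $\|\ad\|\,\|Q\|<\pi$. Each route buys something: yours actually justifies the passage from the scalar bounds $|g|\le B_1$, $|g'|\le B_2$ on $\{|z|\le\pi\}$ to operator-norm bounds on $g'(\tau\ad Q)$ (Cauchy estimates for the Taylor coefficients plus submultiplicativity), a step the paper's proof silently elides by treating $B_1,B_2$ as if they were operator bounds; the paper's fixed-point iteration, on the other hand, simultaneously \emph{constructs} the solution of \eqref{EqQ}, i.e.\ it establishes that $\exp^{-1}a(t)$ is well defined for $t\|A\|_{L^\infty[0,t]}<\eps$, which is exactly how the lemma is invoked in Section~\ref{sec:FlatProof}, whereas your argument presupposes existence of $Q$ (as the appendix's standing assumption admittedly permits). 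Finally, your constant comes out as roughly $\|\ad\|t^2B_2\|A\|^2_{L^\infty[0,t]}/2$ up to $(1-O(\eps))^{-1}$ factors rather than the stated $\|\ad\|t^2B_2^2\|A\|^2_{L^\infty[0,t]}/(1-\|\ad\|B_2\eps)$; since $B_2\ge|g'(0)|=\tfrac12$, your bound implies the stated one for $\eps$ small, and the discrepancy is harmless anyway because the paper's own final display does not literally reproduce its stated constant either --- only the form $Ct^2\|A\|_{L^\infty[0,t]}^2$ is ever used.
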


\begin{proof}
We want to estimate $Q(t)$ using Picard iteration. 
For any $t > 0$, write $M_t = \sup_{s \in [0,t]} \| A(s)\|$. 
 Define $\mathscr{A}(Q)(t) = \int_0^t g(\ad Q(s)) A(s) \, ds$ and write $b  = \pi/\|\ad\|$. Note that
$$\|\mathscr{A}(Q)\|_{L^\infty[0,t]} \leq t M_t B_1  \text{ whenever } \|Q\|_{L^\infty[0,t]} \leq b,$$
and
$$\|\mathscr{A}(Q_1 - Q_2)\|_{L^\infty[0,t]} \leq t M_t B_2 \| \ad\|  \|Q_1 - Q_2\|_{L^\infty[0,t]} .$$

Define $B = \max \{ B_1 \|\ad \|/\pi , B_2\| \ad\| \}$ and assume that $t M_t < 1/B$. Then it follows that $Q(t) = \lim_{n \to \infty} \mathscr{A}^n(0)$ is the solution of \eqref{EqQ}
and
\begin{align*} & \| Q(t) - \mathscr{A}(0)(t) \| \leq \| Q - \mathscr{A}(0) \|_{L^\infty[0,t]} \\
& \leq  tM_t B_2 \| \ad\| \| Q(t)\|_{L^\infty [0,t]} \leq  \frac{t^2 M_t^2 B_2^2}{1- t M_t B_2 }  .\end{align*}
The result follows from choosing $\eps \leq 1/B$.

\end{proof}


\bibliographystyle{habbrv}
\bibliography{Bibliography}

\end{document}